\newtheorem{theorem}{Theorem}[section]
\newtheorem{conjecture}{Conjecture}[section]
\newtheorem{remark}{Remark}
\begin{document}
\title{\textbf {\Large On the Dynamics of Complex Quadratic Families under Holomorphic and Non-Holomorphic Singular Perturbations}}
\author{
    \textbf{Haitao Shang}\footnotemark[1] \footnotemark[2]}

\renewcommand{\thefootnote}{\fnsymbol{footnote}}
\footnotetext[1]{\emph {Department of Mathematics, University of Minnesota, MN, USA}}
\footnotetext[2]{Email: htshang.research@gmail.com}

\date{}

\maketitle
\abstract{This work surveys the topological and statistical properties of {\it real} quadratic maps and investigates the {\it complex} quadratic maps under holomorphic and non-holomorphic singular perturbations.} 

\tableofcontents

\clearpage
\section{Introduction}

It is well known that differential equations can be used to model many phenomena in nature. Following Newton's fundamental discovery, which successfully explained the motion of a two-body system, this perspective was widely accepted by most scientists. Over the next two centuries, Newton's successors continued his methodology and attempted to construct a unified linear model to describe the entire natural world. However, all attempts to explicitly and analytically solve the three-body problem ended in failure. Some physical scientists gradually recognized the limitation of linearization: the local linearization of nonlinear problems causes the ``full truth" of the system to be overshadowed by the ``partial beauty" of local linear properties.

At the end of the 19th century, H. Poincar\'{e} proposed an entirely new viewpoint: rather than analytically solving the three-body problem, one should qualitatively describe the behavior of solutions to the differential equations. Generally, qualitative behavior includes fixed points (stationary solutions), periodic orbits (periodic oscillations), orbits exhibiting forward or backward asymptotic behavior, and orbits exhibiting both forward and backward asymptotic behavior (homoclinic orbits) \cite{Lyubich I, Strogatz}. Poincar\'{e} initially believed that these homoclinic orbits occupied most of the phase space; however, this incorrect conjecture eventually inspired other forms of qualitative analysis, such as recurrent trajectories, instability, mixing, and entropy, which collectively influenced the development of chaos theory.
This qualitative analysis perspective later evolved into the core paradigm of modern complex systems theory. Complex systems emphasize that the behavior of the whole cannot be deduced solely from the properties of individual components or local interactions; rather, it emerges from the interplay of multiscale interactions and nonlinear couplings within the system. Such systems often exhibit emergent phenomena, where local rules fail to directly account for global patterns. For example, in ecological, economic, or climate systems, the simple behavior of individual components can give rise to highly unpredictable global dynamics. Complex systems theory not only highlights nonlinearity and high dimensionality, but also focuses on adaptability, emergent structures, and self-organization—features that provide an essential conceptual foundation for subsequent developments in chaos theory.

Moreover, the dynamics of complex systems are often constrained by network structures and feedback mechanisms. Positive feedback can amplify local perturbations, leading to cascading effects or sudden transitions, whereas negative feedback tends to stabilize the system or generate oscillatory behavior. Many complex systems also display scale invariance or approximate power-law distributions, implying that the probability of extreme events is far higher than would be expected from exponential decay, and that there exists a nontrivial link between local events and global patterns. By incorporating these concepts, complex systems theory can explain how nonlinear microscopic rules generate diverse dynamical patterns across scales, including chaos, fractals, self-organized criticality, and synchronized or collective behaviors in complex networks. This cross-scale, interdisciplinary framework provides a deeper understanding of modern dynamical systems and chaos theory, and guides the analysis of real-world phenomena that are inherently difficult to predict.

Poincar\'{e}'s successor, G. Birkhoff, initiated research in topological dynamics and ergodic theory \cite{Strogatz}. Concurrently, J. Julia and P. Fatou began studying complex dynamics and laid the foundations of this field \cite{Alexander}. In the following thirty years, A. Kolmogorov, V. Arnold, and J. Moser conducted in-depth studies on the stability of Hamiltonian systems, building upon Birkhoff's unfinished work and opening a new era in dynamical systems \cite{Hao}. Progress in complex dynamical systems during this period was relatively slow due to the lack of powerful computational tools \cite{Alexander}.
In 1963, E. Lorenz presented a classical example in his seminal article, ``Deterministic Non-periodic Flow" \cite{Lorenz}. He proposed a set of three-dimensional differential equations to describe atmospheric convection and discovered that the simulation results were extremely sensitive to initial conditions, a phenomenon later termed the ``Butterfly Effect." In 1971, D. Ruelle and F. Takens proposed a new theory of turbulence in dissipative systems based on preliminary considerations of strange attractors \cite{Ruelle}. A few years later, T. Li and J. Yorke published ``Period Three Implies Chaos" \cite{Yorke}, which marked the first formal introduction of the term ``chaos" (although A. Sarkovskii had independently proposed a similar, more general theorem \cite{Devaney III}). Simultaneously, R. May demonstrated chaotic phenomena in population dynamics using the logistic equation \cite{May}. Other researchers, such as A. Winfree \cite{Winfree} and M. Feigenbaum \cite{Feigenbaum I, Feigenbaum II}, also significantly advanced the field during this period. The 1970s thus represented a pivotal decade in the development of chaos theory. In the 1980s, with rapid advances in computer science, many new theories in dynamical systems, particularly complex dynamics, were discovered. B. Mandelbrot established a new branch, ``fractal geometry," based on numerical analysis and graphical technology \cite{Mandelbrot}.
Because chaotic motions are non-periodic, the effective period of a chaotic system can be considered infinite. Consequently, chaotic systems are intrinsically linked to the concept of ``infinity." A central problem is determining whether a finite physical process is truly chaotic or simply exhibits very long periodicity. Traditionally, a physical system is considered chaotic if it satisfies the mathematical definitions of chaos, namely, exhibiting positive Lyapunov exponents and sensitivity to initial conditions.

Further studies have revealed that many complex and chaotic systems exhibit power-law distributions, indicating that the probability of extreme events decays much more slowly than in exponential distributions. This means that rare but large-scale events (fat tails) can play a disproportionate role in system behavior. 
Power-law behaviors imply scale-invariance, where statistical patterns observed at small scales can provide insight into large-scale dynamics. 
Such scaling properties are closely related to fractal geometry and multi-scale correlations observed in chaotic attractors, making them central to understanding emergent phenomena across physical and biological processes \cite{Shang2023generic, Shang2024Probing}, geological history \cite{Shang2024Percapita, Shang2024powerisotopes, shang2024scaleinvariant}, and life evolution \cite{Shang2024Scaling, Shang2025Accelerations}, and other dynamical systems.
Power-law distributions often arise in conjunction with multi-level feedback mechanisms. Positive feedback can amplify small perturbations, triggering cascading effects, tipping points, or abrupt regime shifts, whereas negative feedback constrains fluctuations, maintaining stability or producing oscillatory dynamics. 
Positive and negative feedback mechanisms widely exist in natural systems and processes \cite{Shang2023Oxidative, Shang2023Mineral, Shang2023Dichotomous}. The interplay of these feedbacks can lead to self-organized criticality, a state in which systems naturally evolve toward critical points where minor perturbations produce avalanches of all sizes. Understanding how feedback interacts with nonlinear network dynamics is therefore crucial for linking classical chaos theory with modern complex systems, providing insight into predictability, resilience, and emergent behavior across natural and artificial systems.

Nowadays, dynamical systems theory encompasses a wide range of perspectives, and chaos is recognized as one of the most significant scientific discoveries of the twentieth century. The generally accepted objective involves ``the study of asymptotic behaviour of almost all orbits in observation of representative finite parameter families" \cite{Lyubich I}, where ``representative" refers to parameter values producing nontrivial maps. R. Devaney \cite{Devaney III} provided a classical definition of chaos: let $X$ be a metric space and $F: X \rightarrow X$ a continuous map. Then $F$ is chaotic if (1) $F$ is transitive; (2) $F$ exhibits sensitive dependence on initial conditions; (3) the periodic points are dense in $X$.
Among these three conditions, sensitivity is central. In fact, transitivity and the density of periodic points imply sensitivity \cite{Robinson, Banks}. Other qualitative or quantitative measures, such as topological entropy and power spectrum, provide additional characterizations of chaos \cite{Robinson}. In one-dimensional systems, transitivity alone implies the other two conditions.

The dynamics of some quadratic families have been extensively studied, revealing many elegant results, whereas the dynamics of other families remain largely unexplored. In this paper, we focus on real and complex quadratic maps. The paper is organized as follows: Section 2 reviews representative one- and two-dimensional quadratic maps from both topological and statistical perspectives. Section 3 summarizes prior results on complex quadratic maps under holomorphic and nonholomorphic singular perturbations and investigates a special case emphasizing the real line, with original contributions presented in Sections 3.2 and 3.6. Section 4 provides a summary.

\clearpage
\section{Topological and Statistical Properties of Dynamics of Real Quadratic Maps}
\subsection{Preliminary}
\item In this section, we consider the dynamics of one- and two-dimensional real quadratic maps with two examples: $F_{a} (x)= 1-ax^{2}$ and the H\'{e}non map. The early study of the dynamics of real quadratic maps is motivated by R. May's research on the population dynamics in 1976 \cite {May}. Much progress has been made towards the analysis of this type of map, which now is treated as a representative model of chaotic dynamics. However, the behaviour of a chaotic system is sensitive to the initial conditions, it is hard and even impossible to study all individual orbits; to solve this technical difficulty, Ergodic theory is suggested to study the long-term behaviours of the typical orbits.

\item \textbf{Ergodic theory} is concerned with the distributional properties of the typical orbits of a dynamical system throughout the phase space, and these statistical properties of orbital distributions are described in terms of measure theory, especially the invariant measure under a transformation or flow. With an invariant measure, many elegant theorems about the dynamical behaviours have been claimed. Two elegant and well-known example are the Poincar\'{e}'s Recurrent Theorem and the Birkhoff Ergodic Theorem.

\begin{remark}
Roughly speaking, the Ergodic theorem studies how large-scale phenomena nonetheless create non-random regularity. The term ``ergodic" originates from Greek words: ``ergon (work)" and ``odos(path)" \cite{Walters}. This term was created by L. Boltzmann in statistical mechanics and it included a hypothesis: ``for a large system of interacting particles in equilibrium, the time average along a single trajectory equals the space average." Unfortunately, this hypothesis was false; but the property a system needs to satisfy to ensure these two quantities (time means and phase means of real-valued functions) to be equal is called ``ergodicity" nowadays. And a modern version of ergodic theory is: the study of long-term average behavior of systems that are evolving with time in their phase spaces \cite{Walters}.      
\end{remark}

\item Before further discussion, let us define that $\mathcal{B}$ is the Borel $\sigma$-algebra of $X$, and $\mu$ is an invariant measure under a transformation $T: X \rightarrow X$, in which $X$ is equipped with some structures (for examples, $X$ is a topological space or a smooth manifold) and $T$ preserves these structures (for example, $T$ is a homeomorphism, diffeomorphism, or a continuous transformation). And $h: X \rightarrow \mathbb{R}$ is defined as a Dirac measure, which is also named as an observable or a characteristic function. In addition, one necessary term should be introduced is ``absolute continuity". One general definition this term, which has several equivalent definitions in different cases \cite{Nielsen}, is: a measure $\mu$ is called absolutely continuous with respect to a measure $\nu$ if  $\forall$ measurable set $E$, $\mu(E) = 0 \implies \nu(E) = 0$.\\
(1) \textbf {Poincar\'{e}'s Recurrent Theorem} states that 

\begin{theorem}
Given a measurable set $A \in \mathcal{B}$ with $\mu (A) > 0$ in a probability space $(X, \mathcal{B}, \mu)$, we have
\begin {equation*}
\mu (\{ a \in A: \exists N \in \mathbb{N}, \forall n >N, T^{n} (a) \notin A \}) = 0.
\end{equation*}
\qed
\end{theorem}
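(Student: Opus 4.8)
The plan is to prove the statement by first isolating the points of $A$ that \emph{never} return to $A$, showing this set is null, and then reducing the general ``finite return'' set in the statement to a countable union of preimages of this null set. Throughout I would use only that $T$ is measure-preserving and that $\mu$ is a probability measure.

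First I would define the never-return set $E = \{a \in A : T^n(a) \notin A \text{ for all } n \geq 1\}$, which can be written as $E = A \cap \bigcap_{n \geq 1} T^{-n}(X \setminus A)$ and is therefore measurable since $T$ is measure-preserving. The key combinatorial observation is that the preimages $E, T^{-1}(E), T^{-2}(E), \ldots$ are pairwise disjoint: if some $x$ lay in both $T^{-i}(E)$ and $T^{-j}(E)$ with $i < j$, then $T^i(x) \in E \subseteq A$, while the defining property of $E$ applied to $T^i(x)$ with exponent $j-i \geq 1$ gives $T^{j-i}(T^i(x)) = T^j(x) \notin A$; but $x \in T^{-j}(E)$ forces $T^j(x) \in E \subseteq A$, a contradiction.

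Next I would exploit invariance together with finiteness of the total mass. Since $\mu$ is $T$-invariant, $\mu(T^{-n}(E)) = \mu(E)$ for every $n$. Because the sets $T^{-n}(E)$ are disjoint and $\mu(X) = 1$, we obtain $\sum_{n \geq 0} \mu(E) = \sum_{n \geq 0} \mu(T^{-n}(E)) \leq \mu(X) = 1$, which forces $\mu(E) = 0$. This pigeonhole-style step — infinitely many disjoint sets of equal positive measure cannot fit inside a probability space — is the heart of the argument and the point I expect to be the main obstacle to state cleanly; everything else is bookkeeping around it.

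Finally I would reduce the set in the statement, $B = \{a \in A : \exists N \in \mathbb{N},\ \forall n > N,\ T^n(a) \notin A\}$, to $E$. For any $a \in B$ the orbit visits $A$ only finitely often, so there is a \emph{last} time $m \geq 0$ with $T^m(a) \in A$ (such a time exists because $a \in A$ gives at least the index $0$). By maximality $T^k(T^m(a)) \notin A$ for all $k \geq 1$, hence $T^m(a) \in E$ and $a \in T^{-m}(E)$. Therefore $B \subseteq \bigcup_{m \geq 0} T^{-m}(E)$, giving $\mu(B) \leq \sum_{m \geq 0} \mu(T^{-m}(E)) = 0$. The only delicate auxiliary points are the measurability of $E$ and $B$ and the well-definedness of the last-return time, both of which are routine.
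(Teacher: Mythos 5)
Your proof is correct and complete, but there is nothing in the paper to compare it against: the paper merely states this classical result (the Poincar\'{e} Recurrence Theorem) as background, with no proof of its own, so your argument must be judged on its own merits. On those merits it is the standard and correct proof: you isolate the never-return set $E = A \cap \bigcap_{n \geq 1} T^{-n}(X \setminus A)$, show the preimages $T^{-n}(E)$ are pairwise disjoint, use invariance ($\mu(T^{-n}(E)) = \mu(E)$, which follows from the paper's definition of invariant measure by induction) together with $\mu(X) = 1$ to force $\mu(E) = 0$, and then cover the set in the statement by $\bigcup_{m \geq 0} T^{-m}(E)$ via the last-return-time argument. Each step checks out: the disjointness argument correctly applies the defining property of $E$ to $T^{i}(x)$ with exponent $j - i \geq 1$; the pigeonhole step is exactly where finiteness of the measure enters (and is the reason the theorem is stated for a probability space); and the reduction is legitimate because for $a$ in the exceptional set the visit times to $A$ form a nonempty (it contains $0$) finite set, so a maximal one exists. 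The measurability points you defer are indeed routine — $E$ and the exceptional set are countable intersections and unions of sets of the form $T^{-n}(A)$ or $T^{-n}(X \setminus A)$, needing only that $T$ is measurable, which is implicit in its being measure-preserving.
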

In other words, almost every point in A returns infinitely often back into A under forward iteration by $T$. \\
(2) \textbf {Birkhoff Ergodic Theorem} claims that
\begin{theorem}
for almost every $x \in X$,
\begin{equation*}
\lim_{n \to +\infty} \frac{1}{n} \sum_{i=1}^{n} h (T^{i}(x)) = \int h d\mu ,
\end{equation*}
in which $\mu$ is an invariant measure in a probability space $(X, \mu)$ (note not $(X, \mathcal{B}, \mu )$). 
\qed
\end{theorem}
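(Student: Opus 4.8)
The plan is to prove the statement in two stages: first establish that the time averages $A_n h(x) := \frac{1}{n}\sum_{i=1}^{n} h(T^i x)$ converge pointwise $\mu$-almost everywhere to a $T$-invariant limit $\tilde h \in L^1(\mu)$ with $\int \tilde h\, d\mu = \int h\, d\mu$, and then identify $\tilde h$ with the constant $\int h\, d\mu$ under the ergodicity hypothesis that the displayed equality tacitly requires. Throughout I would assume $h \in L^1(\mu)$ (a bounded observable being a special case) and work with the partial sums $S_n h = \sum_{i=0}^{n-1} h\circ T^i$, noting that shifting the summation index between $i=0$ and $i=1$ does not change the Ces\`aro limit.

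The engine of the whole argument is a Maximal Ergodic Theorem, which I expect to be the main obstacle. Following Garsia's combinatorial approach, for $f \in L^1(\mu)$ I would set $F_N = \max_{1 \le k \le N} S_k f$ and $F_N^+ = \max(F_N, 0)$, and prove the inequality $\int_{\{F_N > 0\}} f\, d\mu \ge 0$. The key algebraic observation is that $S_k f = f + (S_{k-1}f)\circ T \le f + F_N^+\circ T$ for every $1 \le k \le N$, whence $F_N \le f + F_N^+\circ T$ on all of $X$. Restricting to $\{F_N > 0\}$, where $F_N = F_N^+$, gives $f \ge F_N^+ - F_N^+\circ T$ there; integrating and using $\int F_N^+\circ T\, d\mu = \int F_N^+\, d\mu$ (invariance of $\mu$) together with $F_N^+ \ge 0$ then yields $\int_{\{F_N>0\}} f\, d\mu \ge 0$. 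Getting this telescoping estimate exactly right, in particular handling the boundary index $k=1$ and the passage from the full space to the superlevel set, is the delicate part.

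With the maximal inequality in hand the convergence follows by a now-standard argument. I would define $\bar h = \limsup_n A_n h$ and $\underline h = \liminf_n A_n h$, both of which are $T$-invariant, and for rationals $\beta < \alpha$ consider the invariant set $E = \{\,\underline h < \beta < \alpha < \bar h\,\}$. Applying the maximal theorem to $(h-\alpha)\mathbf{1}_E$ and, symmetrically, to $(\beta - h)\mathbf{1}_E$ gives the two inequalities $\int_E h\, d\mu \ge \alpha\,\mu(E)$ and $\int_E h\, d\mu \le \beta\,\mu(E)$; since $\alpha > \beta$ these are compatible only if $\mu(E)=0$. Taking the countable union over all rational pairs shows $\bar h = \underline h$ almost everywhere, so $\tilde h := \lim_n A_n h$ exists $\mu$-a.e. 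A truncation and Fatou argument, again combined with invariance, then gives $\tilde h \in L^1(\mu)$ and $\int \tilde h\, d\mu = \int h\, d\mu$.

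Finally I would invoke ergodicity to finish. Since $\tilde h$ is $T$-invariant and $T$ is assumed ergodic, $\tilde h$ is constant $\mu$-almost everywhere; integrating this constant and using $\int \tilde h\, d\mu = \int h\, d\mu$ forces its value to be $\int h\, d\mu$, which is precisely the claimed limit. I would close by remarking that without the ergodicity assumption one obtains only the $T$-invariant limit $\tilde h$, and that the displayed form of the theorem is exactly the ergodic specialization of this more general pointwise statement.
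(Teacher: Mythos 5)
Your proposal is correct, but there is nothing in the paper to compare it against: the paper states this result as the classical Birkhoff Ergodic Theorem and closes it immediately with a \qed{} symbol, offering no argument of its own (the surrounding discussion merely defers to the literature, citing Collet--Eckmann). What you outline is the standard modern proof: Garsia's telescoping derivation of the Maximal Ergodic Theorem (your handling is right --- the bound $F_N \le f + F_N^{+}\circ T$ needs the convention $S_0 f = 0$ to cover $k=1$, and passing from $X$ to $\{F_N > 0\}$ uses both $F_N^{+}\ge 0$ and the invariance of $\mu$, as you say), the dichotomy argument on the invariant sets $E=\{\,\underline h<\beta<\alpha<\bar h\,\}$ indexed by rational pairs, where $\alpha\,\mu(E)\le\int_E h\,d\mu\le\beta\,\mu(E)$ forces $\mu(E)=0$, truncation plus Fatou for the $L^1$ identification (correctly flagged as needing more than Fatou alone, which gives only an inequality), and ergodicity at the end to collapse the invariant limit function to a constant. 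The one substantive issue is one you spotted yourself: as printed, the theorem omits the ergodicity hypothesis, and without it the displayed equality is false in general --- the a.e.\ limit of the time averages is the conditional expectation of $h$ with respect to the $\sigma$-algebra of $T$-invariant sets, which is constant only when $T$ is ergodic. The paper concedes this only after the fact, in the phrase ``for an ergodic endomorphism'' following the theorem; your closing remark supplies exactly the missing hypothesis, so your argument proves the statement the paper intends rather than the one it literally displays. The remaining thin spots in your sketch (the a.e.\ identification of $E$ with the set where the maximal function of $(h-\alpha)\mathbf{1}_E$ is positive, which uses the strict invariance $T^{-1}E=E$ so that the truncated sums vanish off $E$) are standard and fixable.
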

Intuitively, we can state that ``time-averages equals space-averages almost everywhere" for an ergodic endomorphism.

\begin{remark}
We give the definitions of invariant measure and ergodic transformation here:\\
(1) Invariant measure: An invariant measure on a measurable space $(X, \mathcal{B}, \mu)$ with respect to a measurable transformation $T$ of this space is a measure $\mu$ on $\mathcal{B}$ for which $\mu (A) = \mu (T^{-1}(A)) $ for all $A \in \mathcal{B}$. (2)Ergodic transformation: Let $T: X \to X$ be a measure-preserving transformation on a measure space $(X, \mathcal{B}, \mu)$, with $\mu (X) = 1$.Then $T$ is an ergodic transformation if for every $A \in \mathcal{B}$ with $T^{-1}(A) = A$ either $\mu(A) = 0$ or $\mu(A) = 1$.

\end{remark}

\begin {remark}
It is worth to mention that if $\mu$ is an invariant Borel measure, that is, $\mu $ is an invariant measure in a probability space $(X, \mathcal{B}, \mu )$, then we claim that ``time-averages equal space-averages everywhere" for an ergodic endomorphism.
\end{remark}

\begin {remark}
One should note the statement of the Birkhoff Ergodic Theorem is only with respect to the invariant measure $\mu$ \cite{Eckmann}; therefore, a set with a full $\mu $-measure may have a 0 Lebesgue measure. In this sense, the invariant measure $\mu$ may lack of physical meaning.
\end{remark}

\item To overcome the technical problem mentioned in the Remark 5, Y. Sinai, D. Ruelle, and R. Bowen introduced the \textbf{physical measure}, which is an ergodic probability measure that is absolutely continuous with respect to Lebegue measure. The other important measure in dynamical systems is \textbf{SRB measure}, which is named after Y. Sinai, D. Ruelle, and R. Bowen but first formally suggested by P. Collet and J. Eckmann. Physical measure is an ergodic SRB measure with no zero Lyapunov exponents \cite{young}. See reference and \cite{Eckmann} \cite{young} for more discussion about the properties of SRB measure and physical measure, and the relations between them.

\item In addition to the statistical approach,we have the other traditional way to understand the dynamics of a system, namely, the topological viewpoint. From the approach, we study the topological properties, such as the hyperbolicity and topological entropy, of dynamical systems.

\item Let $X$ be a compact metric space with metric $d$ and $F: X \rightarrow X$ be a continuous transformation. For $\epsilon > 0$ and $n \in \mathbb{Z}^+$, we say $E \subset X$ is an $(n, \epsilon)$-separated set if for every $x,y \in E$ there exists $0 \leq i < n$ such that $d(f^i(x), f^i(y)) > \epsilon$. 
Then, the \textbf{topological entropy} of $f$, which we denote $h_{\text{top}}(f)$, is defined as
\begin{equation*} 
h_{\text{top}}(f) = \lim_{\epsilon \to 0}\lim_{n \to \infty} \sup \dfrac{1}{n}\log N(n, \epsilon),
\end{equation*}
where $N(n, \epsilon)$ represents the maximum cardinality of all $(n, \epsilon)$-separated sets.

\begin{remark}
Positive topological entropy implies topological chaos in the Li-Yorke sense, in which there exists an uncountable scrambled set  \cite{Lyubich I}. A set $S \subseteq X$ is scrambled if every pair $(x, y)$ of distinct points in $S$ satisfies $ \lim_{n \to \infty} \inf d(f^n(x), f^n(y)) = 0$ and $\lim_{n \to \infty} \sup d(f^n(x), f^n(y)) > 0$ .
\end{remark}

Meanwhile, the other concept, \textbf{hyperbolicity}, is defined as follows: a compact set $X \subset M$, where $M$ is a compact manifold, is hyperbolic if\\
(1)$X$ is invariant under a diffeomorphism $F$, that is, $F(X) \subset X$; \\
(2)for all $x \in X$, the tangent space $T_{x}M$ has a continuous splitting, $T_{x}M = E^{s} \oplus E^{u}$, where stable manifold $E^{s}$ is uniformly contracting and unstable manifold $E^{u}$ is uniformly expanding under the derivatives. \\
In different cases, the hyperbolicity is called uniform, semiuniform and nonuniform. Based on the hyperbolicity, we can obtain more topological concepts of dynamical systems, such as shadowing, homoclinicity, Markov partition and so forth. See \cite{Robinson} and \cite{Eckmann} for more discussion.

\item From the perspective of statistics and topology, we will review and summarize some theories about the one- and two-dimensional real quadratic maps in the rest of this section.  

\clearpage
\subsection{Dynamics of a Real Quadratic Family $F_{a}: \mathbb{R} \rightarrow \mathbb{R},  F_{a} (x)= 1-ax^{2}$}
\item Firstly we consider a map in the one-dimensional case, that is, the quadratic family
\begin{equation*}
F_{a}: \mathbb{R} \rightarrow \mathbb{R},  F_{a} (x)= 1-ax^{2}
\end{equation*}
where $a \in \mathbb{R}$.
 Let us restrict our discussion about of $F_{a}$ when $a \in ( 0, 2] $ and $x \in [ -1,  1] $, since the dynamics is simple and well-understood outside the parameter interval or the domain. In this case, $F_{a}: [-1, 1] \rightarrow [-1, 1]$ is an $\textbf{S-unimodal}$ map, since it is of class $C^{3}$ and has a negative Schwarzian derivative \cite {Devaney VI}:

\begin{equation*}
S(F_{a}) = \dfrac {{F_{a}}'''}{{F_{a}}'} - \dfrac {3}{2} {\left( \dfrac {{F_{a}}''}{{F_{a}}'} \right) }^{2} < 0.
\end{equation*}

\item  Now we introduce a theorem about the dynamics of $F_{a}$ with the statistical viewpoint.
\begin{theorem}
\emph{(Jakobson Theorem \cite{Jakobson}, 1981)}
There is a positive Lebesgue measure set of parameters $a \in (0, 2]$ for which $F_{a}$ has an absolutely continuous ergodic measure $\mu _{a}$. \qed
\end{theorem}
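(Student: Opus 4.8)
The plan is to anchor the argument at the parameter $a = 2$, for which $F_2(x) = 1 - 2x^2$ is conjugate (via $x = \cos\theta$, which turns $F_2$ into an angle-doubling map) to a full branched expanding map and therefore carries the explicit absolutely continuous invariant measure $d\mu_2 = \frac{1}{\pi\sqrt{1-x^2}}\,dx$. The goal is to show that absolute continuity survives for a positive-measure set of nearby parameters. The only obstruction to an acim is the critical point $c = 0$, where $F_a'(0) = 0$ and all expansion is lost; so the strategy is to prove that for a large set of $a$ the forward orbit of the critical point stays away from $0$ fast enough. Quantitatively, I would aim for the Collet--Eckmann condition $|(F_a^n)'(F_a(0))| \geq C\lambda^n$ for some $\lambda > 1$ and all $n$, and then extract an acim from this expansion.

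Next I would set up the phase-space mechanics. Away from a small neighborhood $(-\delta,\delta)$ of the critical point the map is uniformly expanding after finitely many iterates, so the only danger is the derivative loss incurred during passages near $0$. The essential analytic tool is the negative Schwarzian derivative already recorded for $F_a$: via the Koebe / minimum principle it gives bounded distortion for iterates $F_a^k$ restricted to intervals on which they are diffeomorphisms. Concretely, I would construct an induced (first-return) Markov map to a fixed interval $I_0$ near the critical value; on each branch this return map is a diffeomorphism onto $I_0$, and the Collet--Eckmann expansion together with bounded distortion makes it uniformly expanding. Once the induced map is exhibited as a uniformly expanding Markov map with bounded distortion and integrable return time, a folklore theorem of Lasota--Yorke / Adler type yields an acim for it, which then pushes down to an ergodic acim $\mu_a$ for $F_a$.

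The hard part, and the genuine content of the theorem, is the parameter-exclusion argument showing that the good (Collet--Eckmann) parameters have positive measure. The key observation is that the parameter derivative and the phase-space derivative along the critical orbit are comparable: $\frac{\partial}{\partial a}F_a^{n}(0)$ differs from $(F_a^{n-1})'(F_a(0))$ only by factors bounded above and below, so exponential expansion in phase space translates into large spreading of the critical value as a function of $a$. I would then induct on the successive returns of the critical orbit to $(-\delta,\delta)$: at each return the surviving parameter interval is partitioned according to how deeply the critical orbit dips toward $0$, and the parameters producing too deep a return (which would break the exponential lower bound) are deleted. By the comparability of derivatives, the deleted portion at each stage is exponentially small relative to the survivors, so the total deletion is summable and a definite fraction of parameters near $a = 2$ remains.

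The main obstacle is precisely this inductive bookkeeping. One must simultaneously maintain the exponential derivative bound through the free/bound dichotomy (the ``free'' iterates, where the orbit is far from $0$ and genuinely expands, versus the ``bound'' iterates, where it recovers from a close return), preserve the bounded-distortion estimates underlying the parameter-to-phase comparison, and control the excluded-parameter measure, all with constants uniform across infinitely many scales. Verifying that the bound-period recovery never consumes more expansion than the free iterates generate is where the real difficulty lies; the explicit conjugacy at $a = 2$ furnishes the base case and the negative Schwarzian derivative furnishes the distortion control that make the remaining estimates tractable.
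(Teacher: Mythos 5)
You should first know that the paper does not prove this theorem at all: it is quoted from Jakobson's 1981 paper with a \qed and used as a black box, so there is no in-paper argument to compare yours against. Judged on its own merits, your roadmap is the standard one from the literature --- more precisely it is the Benedicks--Carleson route to Jakobson's theorem (establish the Collet--Eckmann condition $|(F_a^n)'(F_a(0))|\geq C\lambda^n$ for a positive-measure set of parameters by induction with parameter exclusion, then convert exponential expansion along the critical orbit into an acim via an induced uniformly expanding Markov map with bounded distortion and integrable return times). Jakobson's original argument is slightly different in emphasis --- he constructs the induced Markov partitions directly and runs the exclusion on them, without isolating the Collet--Eckmann condition as an intermediate step --- but both routes are legitimate and both appear in modern expositions. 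Your anchoring at $a=2$ via the Chebyshev conjugacy, your use of the negative Schwarzian derivative for Koebe-type distortion control, and your free/bound decomposition of the critical orbit are all the correct ingredients.

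The genuine gap is that every hard step is named rather than performed, and for this theorem those steps \emph{are} the theorem. Two points in particular cannot be waved through. First, the comparability $\partial_a F_a^n(0) \asymp (F_a^{n-1})'(F_a(0))$ (the transversality lemma) is not an a priori fact: it can only be established for parameters that have already maintained the expansion estimate up to time $n$, so it must be proved \emph{inside} the induction, simultaneously with the exponential lower bound and the distortion estimates in parameter space; stating it as a standing "key observation" inverts the logical order. Second, the measure estimate --- that the parameters excluded at each return are exponentially small \emph{relative to the survivors}, uniformly over infinitely many scales --- requires the bound-period recovery lemma (expansion regained after a close return compensates, with a definite factor to spare, the derivative lost in the return) together with a large-deviation count of how often deep returns can occur; your proposal acknowledges this is "where the real difficulty lies" but offers no mechanism for it. As it stands the write-up is an accurate map of the known proof, not a proof: it would be acceptable as a statement of strategy, but none of the estimates that make Jakobson's theorem a deep result have been carried out.
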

\item According to the discussion above, we know that this measure $\mu _{a}$ is a physical measure. Now we can apply the property of physical measure to study the dynamics of $F_{a}$ when $a \in (0, 2]$. Before further discussion, let us firstly introduce an elegant theorem:
\begin{theorem}
\emph{(Oseledec Theorem \cite{Eckmann}, 1968)}
Let $\mu$ be an ergodic invariant measure for a diffeomorphism $F$ of a compact manifold $M$. Then for $\mu$-almost every initial condition $x$, the sequence of symmetric nonnegative matrices
\begin{equation*}
\sqrt[2n]{(D_{x} F^{n})^{T} (D_{x} F^{n})},
\end {equation*}
where $D_{x} F^{n}$ denotes the differential of the map $F^{n}$ at the point x, converges to a symmetric nonnegative matrix $\Lambda $ (independent of x). Denote by $\lambda _{0} > \lambda _{1} > . . . > \lambda _{k}$ the strictly decreasing sequence of the logarithms of the eigenvalues of the matrix  (some of them may have nontrivial multiplicity). These numbers are called the Lyapunov exponents of the map f for the ergodic invariant measure $\mu$. For $\mu$-almost every point x there is a decreasing sequence of subspaces
\begin {equation*}
M = E_{0}(x) \supset E_{1}(x) \supset \dotsb \supset E_{k}(x) \supset E_{k+1}(x) = \{\mathbf{0}\},
\end {equation*}
satisfying ($\mu $-almost surely) $D_{x}F E_{i}(x) = E_{i}(F(x))$ and for any $i \in \{0,  \dotsc , k\} $ and any initial error vector $\mathbf{h} \in E_{i}(x) \setminus E_{i+1}(x)$ one has
\begin {equation*}
\lambda_{i} = \lim_{n\rightarrow \infty}\dfrac{1}{n}\log||D_{x}F^{n} \mathbf{h}||.
\qed
\end {equation*}
\end{theorem}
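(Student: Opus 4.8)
The plan is to recognize this as Oseledets' multiplicative ergodic theorem and to prove it by reducing the statement about the diffeomorphism $F$ to one about the linear cocycle generated by its derivative, to which a subadditive ergodic argument applies. First I would set $A_n(x) := D_x F^n$ and record the cocycle identity $A_{n+m}(x) = A_m(F^n(x))\,A_n(x)$, which is merely the chain rule. Because $M$ is compact and $F$ is a diffeomorphism, the functions $x \mapsto \log^+\|D_x F^{\pm 1}\|$ are continuous, hence bounded, so the integrability condition $\log^+\|A_1\| \in L^1(\mu)$ that the ergodic machinery requires holds automatically. This is the step that lets ergodicity of $\mu$ convert almost-everywhere limits into honest constants.

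The analytic engine is Kingman's subadditive ergodic theorem applied through exterior powers. For each $p \in \{1,\dots,d\}$ with $d = \dim M$, submultiplicativity of the operator norm under the cocycle shows that $n \mapsto \log\|\wedge^p A_n(x)\|$ is subadditive along the orbit of $x$, where $\wedge^p A_n$ denotes the $p$-th exterior power; since $\mu$ is ergodic, Kingman's theorem yields constants $\Lambda_p$ with $\tfrac{1}{n}\log\|\wedge^p A_n(x)\| \to \Lambda_p$ for $\mu$-a.e. $x$ (the case $p=1$ being Furstenberg--Kesten). Writing $\sigma_1(n)\geq\dots\geq\sigma_d(n)$ for the singular values of $A_n$, equivalently the square roots of the eigenvalues of $A_n^T A_n$, one has $\|\wedge^p A_n\| = \sigma_1(n)\cdots\sigma_p(n)$, so $\tfrac{1}{n}\log(\sigma_1(n)\cdots\sigma_p(n)) \to \Lambda_p$. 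Differencing consecutive $\Lambda_p$ gives $\tfrac{1}{n}\log\sigma_p(n)\to\Lambda_p-\Lambda_{p-1}$, and these limits, listed without repetition, are precisely the Lyapunov exponents $\lambda_0 > \dots > \lambda_k$. In particular the eigenvalues of $(A_n^T A_n)^{1/2n}$ converge, settling the eigenvalue half of the claim.

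It then remains to promote convergence of the spectrum of $(A_n^T A_n)^{1/2n}$ to convergence of the matrices themselves to a limit $\Lambda$, and to build the flag. Once distinct exponents are separated by a definite spectral gap, the orthogonal eigenprojections of $(A_n^T A_n)^{1/2n}$ onto the clusters of singular directions should stabilize; quantitatively I would estimate the angle between the relevant eigenspace of $A_n^T A_n$ at step $n$ and at step $n+1$ and show these angular errors are summable, so that the frames are Cauchy and converge. Labelling the resulting eigenspaces of $\Lambda(x)$ as $U_0(x),\dots,U_k(x)$, attached to $e^{\lambda_0}>\dots>e^{\lambda_k}$, I would define the decreasing filtration by $E_i(x) := \bigoplus_{j\geq i} U_j(x)$, the span of directions of growth rate at most $\lambda_i$. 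Equivariance $D_x F\,E_i(x) = E_i(F(x))$ is then read off from the cocycle identity via the growth-rate characterization of the $E_i$, and the sharp law $\lambda_i = \lim_{n}\tfrac{1}{n}\log\|A_n\mathbf{h}\|$ for $\mathbf{h}\in E_i(x)\setminus E_{i+1}(x)$ follows by combining the lower bound from the singular-value asymptotics with the upper bound supplied by convergence of $\Lambda$.

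The existence of the exponents is the soft part: it is essentially Kingman applied to exterior powers and costs little. The genuine difficulty is the matrix convergence $(A_n^T A_n)^{1/2n}\to\Lambda$ together with the exact identification of the growth rate on each $E_i\setminus E_{i+1}$. The obstruction is non-commutativity: the orthonormal eigenframes of $A_n^T A_n$ for different $n$ need not be aligned, so control of singular values alone does not pin down directions, and one must prevent the eigenspaces from rotating without limit. I expect the crux to be the angular gap estimate that simultaneously makes the successive frames Cauchy and forces the subexponential error terms to be genuinely negligible along almost every orbit; this is precisely where the ergodic-theoretic input, rather than linear algebra alone, becomes indispensable.
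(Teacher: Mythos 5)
The paper does not actually prove this theorem: it is quoted from the literature (Oseledec 1968, via the Collet--Eckmann reference) and the statement is simply closed with a tombstone, so there is no in-paper proof to compare your route against. Judged on its own terms, your strategy is the standard one (Raghunathan's argument, as in Ruelle's and Viana's expositions): the derivative cocycle $A_n(x)=D_xF^n$ with the chain-rule identity, integrability of $\log^+\|D_xF^{\pm 1}\|$ from compactness of $M$, Kingman's subadditive ergodic theorem applied to $\log\|\wedge^p A_n\|$ with ergodicity turning the a.e.\ limits into constants, the identity $\|\wedge^p A_n\|=\sigma_1(n)\cdots\sigma_p(n)$, and differencing to extract the exponents. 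All of that is correct, and it is indeed the soft half; your flag $E_i=\bigoplus_{j\geq i}U_j$ and the equivariance argument via the growth-rate characterization are also the right constructions for the one-sided (filtration) form of the theorem, which is exactly what the statement here asks for.

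The gap is that the proposal stops precisely where the theorem becomes hard, and says so in the conditional: the convergence of $(A_n^TA_n)^{1/2n}$ to a single matrix $\Lambda$, the stabilization of its eigenspaces, and the sharp law $\lambda_i=\lim_n\frac{1}{n}\log\|A_n\mathbf{h}\|$ on $E_i\setminus E_{i+1}$ are described as things you \emph{would} do, with no indication of how the angular error between the eigenframes at steps $n$ and $n+1$ is actually bounded. The standard way to close this is to prove that the angle between corresponding spectral subspaces of $A_n^TA_n$ and $A_{n+1}^TA_{n+1}$ is $O(e^{-\delta n})$ for some $\delta>0$ controlled by the gap between consecutive exponents, using the fact that $\frac{1}{n}\log\|A_1(F^n x)\|\to 0$ for $\mu$-a.e.\ $x$ (a Birkhoff-theorem consequence of the integrability you established); summability of these angles makes the frames Cauchy, and the same exponential gap supplies the upper bound in the growth-rate law, whose lower bound comes from the singular-value asymptotics. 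Without that estimate, what you have is an accurate road map of the known proof rather than a proof: the non-commutativity obstruction you correctly identify as the crux is named but not overcome.
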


\item Let's now use ''$F'$" to denote the first derivative of $F$, then it is easy to show that
\begin{equation*}
\dfrac{1}{n}\log||{F^{n}} '(x)|| = \dfrac{1}{n} \sum_{i=1}^{n} (\log||F'(F^{i}(x))||),
\end{equation*}
the right hand side is a temporal average \cite{Eckmann}. Meanwhile, since $\log|{F_{a}}'|$ is $\mu_{a}$-integrable and $\int {\log|{F_{a}}'|} d\mu_{a} > 0$, then by the Birkhoff Ergodic Theorem,
\begin{equation*}
\lim_{n \to +\infty}\dfrac{1}{n}\log|{F_{a}^{n}} '(x)| = \lim_{n \to +\infty} \dfrac{1}{n} \sum_{i=1}^{n} (\log|{F_{a}}'(F_{a}^{i}(x))|) = \int {\log|{F_{a}}'|} d\mu _{a}.
\end{equation*}

Thus, the Lyapunov exponent mentioned in the Oseledec Theorem in the one-dimensional case is
\begin{equation*}
\lambda _{a} = \int {\log|{F_{a}}'|} d\mu _{a} > 0,
\end{equation*}
which implies the sensitive dependence on the initial conditions, a primary feature of chaotic behaviour.

\item As mentioned before, there are two traditional approaches to study the dynamics of a system, one is the statistical or ergodic, while the other is called topological or differential-geometric \cite{Eckmann}. Now let us introduce a theorem about the dynamics of $F_{a}$ from the topological approach:

\begin{theorem}
\emph{(J. Graczyk and G. Swiatek \cite{Graczyk}, 1997)}
There is an open dense set $S \subset (0,2]$, for Lebesgue-almost every point $a \in S$, $F_{a}$ has a periodic attracting orbit. \qed
\end{theorem}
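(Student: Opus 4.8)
The plan is to identify $S$ with the set of \emph{hyperbolic} (or regular) parameters --- those $a$ for which $F_{a}$ possesses an attracting periodic orbit --- and to prove that this set is both open and dense in $(0,2]$; this is precisely the assertion that hyperbolicity is dense in the real quadratic family. Openness is the routine half. If $F_{a}$ has an attracting cycle of period $p$, its multiplier $\lambda = (F_{a}^{p})'$ along the orbit satisfies $|\lambda| < 1$, so by the implicit function theorem the periodic point persists under small perturbation of $a$ and its multiplier varies continuously, hence $|\lambda| < 1$ on a whole neighborhood. Thus $S$ is open. Moreover, by the negative Schwarzian derivative established earlier (Singer's theorem), each $S$-unimodal $F_{a}$ has at most one attracting cycle, and its immediate basin must absorb the unique critical point $x = 0$, which is where $F_{a}' = 0$; this pins down the combinatorial role played by the critical orbit and will be used repeatedly below.

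The substance of the theorem is density, and here I would first pass, via an affine change of coordinates, to the normalized family $z \mapsto z^{2} + c$ and its complexification, so that the machinery of holomorphic dynamics becomes available. The strategy is to show that the non-hyperbolic parameters contain no interval. Suppose, for contradiction, that some open interval $J \subset (0,2]$ consists entirely of non-hyperbolic parameters. By the monotonicity of the kneading sequence --- equivalently, of the topological entropy $a \mapsto h_{\mathrm{top}}(F_{a})$, in the sense of Milnor and Thurston --- two sub-cases arise: either the combinatorics vary across $J$, or they are constant on a subinterval. In the first case one locates, between parameters of differing kneading type, a parameter realizing a superattracting or attracting cycle, i.e.\ a hyperbolic window, contradicting the choice of $J$. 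Hence it suffices to treat the case where all maps $F_{a}$ with $a \in J$ are topologically conjugate to one another and simultaneously non-hyperbolic.

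This remaining case is resolved by \emph{quasiconformal rigidity}, and this is where the real difficulty lies. The key analytic input is the theory of \emph{complex a priori bounds}: for an infinitely renormalizable or suitably non-renormalizable non-hyperbolic map, the successive renormalizations have uniformly bounded geometry, which allows the topological conjugacy between any two combinatorially equivalent maps in $J$ to be upgraded to a quasiconformal conjugacy of their complex extensions. Once such a conjugacy is in hand, its complex dilatation defines an invariant Beltrami field for the dynamics; by the measurable Riemann mapping theorem together with the absence of invariant line fields on the Julia set --- the rigidity statement of Lyubich and of Graczyk--Swiatek --- this conjugacy must in fact be conformal, hence affine, forcing the two maps to coincide. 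Since distinct parameters yield genuinely distinct maps after normalization, the interval $J$ cannot exist, so $S$ is dense. The hardest step, and the true content of the 1997 work, is exactly the establishment of these complex bounds and the consequent rigidity; everything else is bookkeeping resting on the Schwarzian and entropy-monotonicity facts already available to us.
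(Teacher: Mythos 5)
The paper offers no proof of this statement: it is quoted as a known result of Graczyk and Swiatek, with the tombstone attached directly to the statement, so there is no argument of record to compare yours against. Judged on its own terms, your outline faithfully reproduces the strategy of the actual 1997 proof (and of Lyubich's independent proof of the same theorem): openness of the hyperbolic set via the implicit function theorem, Singer's theorem (negative Schwarzian) to tie any attracting cycle to the critical point, the Milnor--Thurston reduction showing that a hypothetical interval $J$ of non-hyperbolic parameters must carry constant combinatorics (otherwise order-density of periodic kneading data produces a superattracting, hence hyperbolic, parameter inside $J$), and finally quasiconformal rigidity via complex a priori bounds, the measurable Riemann mapping theorem, and the absence of invariant line fields. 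That is indeed the correct architecture, and your closing admission that the complex bounds and rigidity are the real content of the theorem is accurate.

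Precisely for that reason, what you have written is a roadmap rather than a proof: the two steps you invoke as black boxes are the theorem, and everything you actually argue (openness, the kneading reduction) was already standard before 1997. Two smaller gaps are worth naming. First, monotonicity of topological entropy is strictly weaker than monotonicity of the kneading invariant --- entropy is constant across hyperbolic windows and along period-doubling cascades --- so the case split you set up needs the Milnor--Thurston kneading statement, not the entropy statement, and the two are not ``equivalent'' as you claim. Second, parameters with neutral (parabolic) cycles are non-hyperbolic yet fit neither of your sub-cases: a map with an indifferent cycle is not rigid in the sense you use, nor does varying combinatorics locate a hyperbolic window for you. These must be disposed of separately, e.g.\ by observing that a parabolic parameter is a saddle-node or period-doubling bifurcation point, so attracting cycles exist at parameters arbitrarily close to it, placing it on the boundary of a hyperbolic window and hence outside the interior of any putative interval $J$ of non-hyperbolic parameters.
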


\item We have shown two properties of the map $F_{a}$ (Theorem 2.1 and Theorem 2.3) so far. Indeed there exist other behaviors when $a \in (0,2]$ and $x \in [-1, 1]$, such as the disappearance of attracting period orbits, the vanishment of physical measures, and so forth. Based on the two properties stated above, M. Lyubich described a beautiful global picture:
\begin{theorem}
\emph{(M. Lyubich \cite{Lyubich III}, 2002)}
For Lebesgue-almost every $a \in (0, 2]$,  the map $F_{a}$ has either a periodic attracting orbit or an absolutely continuous ergodic measure. \qed
\end{theorem}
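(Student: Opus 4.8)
The plan is to prove the dichotomy by a measure-theoretic stratification of the non-regular parameters according to their renormalization type, showing that every stratum is stochastic up to a Lebesgue-null exceptional set, following Lyubich \cite{Lyubich III}. Call $a$ \emph{regular} if $F_a$ has an attracting periodic orbit. Such parameters form an open set (hyperbolic maps are structurally stable), and this set is dense by the Graczyk--Swiatek theorem (Theorem 2.3); hence the regular parameters already account for an open dense collection, and since a countable union of null sets is null, it suffices to show that Lebesgue-almost every \emph{non-regular} parameter carries an absolutely continuous ergodic measure. First I would classify each non-regular $F_a$ by how many times it can be renormalized, splitting the non-regular parameters into the infinitely renormalizable ones and the at most finitely renormalizable ones.

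For the infinitely renormalizable parameters I would invoke the hyperbolicity of the renormalization operator together with the complex a priori bounds of Sullivan, McMullen and Lyubich. The set of such parameters is the nested intersection of the parameter windows indexed by successive renormalization combinatorics; exponential contraction of renormalization forces the diameters of these windows to shrink geometrically, so their intersection is Lebesgue-null and may be discarded.

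It then remains to treat the at most finitely renormalizable parameters, and here I would reduce to the non-renormalizable case. Renormalization acts real-analytically and with bounded distortion on the relevant parameter windows and conjugates the dynamics of $F_a$ to that of its renormalization; since both the existence of an attracting orbit and the existence of an absolutely continuous ergodic measure are inherited through this conjugacy, and null sets pull back to null sets under a bounded-distortion map, it is enough to prove stochasticity for almost every non-regular \emph{non-renormalizable} parameter. For such maps I would build the principal nest of critical puzzle pieces $V^0 \supset V^1 \supset V^2 \supset \cdots$ via successive first-return (generalized renormalization) maps, use the complex bounds to keep each annulus $V^n \setminus V^{n+1}$ of definite modulus, and then establish linear growth of these moduli. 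Linear growth yields decay of geometry, which controls the combinatorics of the returns of the critical orbit.

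The decisive and hardest step will be the phase-parameter relation: a quasisymmetric correspondence identifying the partition of a parameter neighborhood of $a$ (recording the return combinatorics) with the dynamical partition of a phase neighborhood of the critical value. Through this correspondence the decaying-geometry estimates in phase space transfer to measure estimates in parameter space, and a Borel--Cantelli (martingale) argument then shows that for Lebesgue-almost every non-regular non-renormalizable parameter the critical orbit grows exponentially and recurs slowly, a Collet--Eckmann-type condition. By the inducing construction behind Jakobson's theorem (Theorem 2.1), such a condition produces an absolutely continuous ergodic measure, so the parameter is stochastic. I expect the main obstacles to be precisely the two rigidity inputs, namely the complex bounds underpinning renormalization hyperbolicity (for the infinitely renormalizable null set) and the phase-parameter relation (for the transfer of measure to parameter space), since these carry essentially all the analytic weight of the argument.
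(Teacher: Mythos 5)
First, a point of calibration: the paper does not prove this theorem at all. It states it as Lyubich's result with a citation, framing it as the ``global picture'' that sits on top of Jakobson's theorem and the Graczyk--Swiatek theorem, and the terminal square marks an omitted (external) proof. So there is no internal argument to compare yours against; your proposal is in effect a reconstruction of the cited literature, and as such it does capture the genuine architecture of Lyubich's proof: regular parameters open and dense, stratification of the non-regular set by renormalization level, reduction of the finitely renormalizable stratum to the non-renormalizable one via the real-analytic structure of the renormalization windows, the principal nest with growing moduli, and the phase-parameter correspondence plus a Borel--Cantelli argument producing a stochasticity criterion almost everywhere. (For the record, Lyubich's own route to the last step goes through the Martens--Nowicki decay-of-geometry criterion rather than a Collet--Eckmann condition; the Collet--Eckmann statement for almost every non-regular parameter is a later refinement due to Avila and Moreira. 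This is a difference of bookkeeping, not of architecture.)

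However, one step of your plan, as written, is genuinely broken: the disposal of the infinitely renormalizable stratum. You argue that hyperbolicity of renormalization makes the parameter windows shrink geometrically, ``so their intersection is Lebesgue-null and may be discarded.'' That inference is invalid. The infinitely renormalizable parameters form a nested intersection of \emph{unions} of windows, one union per level, indexed by all admissible combinatorics at that level; geometric decay of the diameter of an individual window controls only a single combinatorial sequence, and the sequences are uncountable in number, so countable additivity cannot finish the argument. A Smith--Volterra (``fat Cantor'') set illustrates the danger: its construction intervals also shrink geometrically at every stage, yet the nested intersection has positive Lebesgue measure. What is actually required is a relative measure estimate: inside each window of level $n$, the union of all level-$(n+1)$ windows occupies a proportion of Lebesgue measure bounded away from $1$, uniformly in the window and in $n$ --- equivalently, a definite proportion of every window consists of parameters that are not once more renormalizable. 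Establishing this uniform estimate is precisely where the parapuzzle geometry and the universality consequences of renormalization hyperbolicity enter, and it constitutes the hard new content of Lyubich's 2002 paper rather than a routine corollary of exponential contraction. Your proposal correctly identifies the two rigidity inputs as carrying the analytic weight, but at this particular step the reduction you describe must be reformulated in terms of relative measure rather than diameters, or the infinitely renormalizable stratum has not been discarded at all.
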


\item A unimodal map $F: X \rightarrow X$ is called \textbf{stochastic} if it has an invariant measure which is absolutely continuous with respect to the Lebesgue measure on $X$. Thus $F_{a}$ is a stochastic map for $a \in (0, 2]$. The existence of the absolutely continuous invariant measure is related to the rate of expansion along the orbit of the critical point $c_{0}$:

\begin{theorem}
\emph{(T. Nowicki \cite{Nowicki I}, 1988)}
A map $F: X \rightarrow X$ is stochastic if its expansion rate is exponential:
\begin{equation*}
DF^{n}(c_{0}) \geq C e^{\lambda n},
\end{equation*}
where $n \in \mathbb{N}$, the constant $C > 0$, and the Lyapunov exponent $\lambda > 0$. \qed
\end{theorem}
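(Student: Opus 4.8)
The hypothesis $|DF^{n}(c_{0})| \geq C e^{\lambda n}$ is the Collet--Eckmann condition: the forward orbit of the critical value expands exponentially. The plan is to realize the invariant density as a fixed point of the Perron--Frobenius (transfer) operator, and to show that the \emph{only} obstruction to absolute continuity, namely escape of mass into the post-critical set, is ruled out by this exponential growth together with the negative Schwarzian hypothesis. First I would set up the operator. For $\phi \in L^{1}(m)$, with $m$ Lebesgue measure on $X$, define
\[
(\mathcal{L}\phi)(y) = \sum_{F(x)=y} \frac{\phi(x)}{|F'(x)|},
\]
so that $\mathcal{L}^{n}\mathbf{1}$ is exactly the density of the pushforward $F^{n}_{*}m$, and a nonnegative fixed point $\mathcal{L}\rho = \rho$ with $\int \rho\, dm = 1$ yields the desired absolutely continuous invariant measure $\mu = \rho\, dm$. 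By the Krylov--Bogolyubov argument, any weak-$*$ accumulation point of the Ces\`aro averages $\rho_{n} = \tfrac{1}{n}\sum_{j=0}^{n-1}\mathcal{L}^{j}\mathbf{1}$ is automatically $F$-invariant; the entire difficulty lies in guaranteeing that some accumulation point is absolutely continuous rather than a singular measure concentrated on the orbit of $c_{0}$.

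Second, I would exploit the $S$-unimodality established earlier. Negative Schwarzian derivative is preserved under iteration and yields, via the minimum principle and the Koebe distortion lemma, uniform control of the distortion of the inverse branches of $F^{n}$ on intervals that stay a controlled distance from the critical orbit. Away from $c_{0}$ the map is expanding, so on such intervals $\mathcal{L}$ behaves like the transfer operator of a uniformly expanding map and obeys a Lasota--Yorke type inequality, giving a priori control of the regular part of $\rho_{n}$.

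Third, and this is the heart of the matter, I would control the branches passing near $c_{0}$. Because the critical point is quadratic, each preimage of a point $y$ near $c_{k} = F^{k}(c_{0})$ contributes a factor of order $|y - c_{k}|^{-1/2}$ to the density, an integrable square-root singularity. The Collet--Eckmann condition is precisely what forces the weights of these singularities to be summable: the factor $e^{-\lambda k}$ arising from $|DF^{k}(c_{0})|^{-1}$ dominates the accumulation of singular terms, so that one obtains a bound of the form
\[
\rho_{n}(x) \leq \sum_{k} b_{k}\, |x - c_{k}|^{-1/2}, \qquad \sum_{k} b_{k} < \infty,
\]
uniformly in $n$. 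Such a uniform integrable majorant gives equi-integrability of the family $\{\rho_{n}\}$, so no mass escapes into a singular component, and any weak-$*$ limit $\rho\, dm$ is genuinely absolutely continuous; this $\rho$ is then the sought invariant density.

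The main obstacle is exactly this last step: quantifying how the square-root singularities generated at each close approach to $c_{0}$ accumulate along the orbit, and verifying that the exponential expansion rate $\lambda > 0$ outpaces that accumulation. This rests on a binding-period (return-time) analysis relating the depth of an approach to $c_{0}$ to the number of iterates needed to recover expansion, and it is here that both the constant $\lambda > 0$ and the negative Schwarzian hypothesis are indispensable, the former to make the geometric series converge and the latter to keep the distortion bounded throughout the binding period. Once the uniform majorant is in place, the existence of $\mu = \rho\, dm$ — and hence the stochasticity of $F$ in the sense defined above — follows immediately.
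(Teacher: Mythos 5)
The paper contains no proof of this statement: it is quoted as Nowicki's theorem with the \qed attached to the statement itself, and the only in-paper content related to a proof is the remark that follows it, namely that for the $S$-unimodal family $F_{a}$ stochasticity follows from the summability condition $\sum_{i=0}^{\infty}\|DF_{a}^{i}(c_{0})\|^{-1/2}<\infty$ (citing \cite{Nowicki II}) together with the interval estimate $|F_{a}^{n}(c-\epsilon,c+\epsilon)|<K\epsilon$ (citing \cite{Nowicki I}). So there is no argument of the paper's own to compare yours against; your proposal can only be measured against the cited literature and that remark.

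Measured that way, your outline is the standard route and is consistent with the paper's remark: the Collet--Eckmann hypothesis $|DF^{n}(c_{0})|\geq Ce^{\lambda n}$ enters exactly by making the square-root singularity weights summable, since $\sum_{k}|DF^{k}(c_{0})|^{-1/2}\leq C^{-1/2}\sum_{k}e^{-\lambda k/2}<\infty$, which is precisely the summability condition the paper quotes; your uniform majorant $\rho_{n}(x)\leq\sum_{k}b_{k}|x-c_{k}|^{-1/2}$ is the density-level form of the paper's estimate $|F_{a}^{n}(c-\epsilon,c+\epsilon)|<K\epsilon$; and the transfer operator, Krylov--Bogolyubov invariance of Ces\`aro limits, Koebe/negative-Schwarzian distortion control, and equi-integrability from an integrable majorant are all correct ingredients. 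The genuine gap is the one you flag yourself: the binding-period analysis that converts exponential growth of $|DF^{k}(c_{0})|$ into coefficients $b_{k}$ that are summable uniformly in $n$ is asserted, not carried out, and it is the entire mathematical content of the theorem. As written, your argument establishes only that summable singularities yield an absolutely continuous invariant measure, not that the Collet--Eckmann condition produces summable singularities; completing that step would amount to reproducing the analysis of \cite{Nowicki I} and \cite{Nowicki II}. So the plan is faithful to the known proof, but it is a reduction to the hardest step rather than a proof of it.
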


For an S-Unimodal function, such as $F_{a}$, by replacing the exponential rate with the summability condition, one can prove \cite{Nowicki II} that $F_{a}$ is stochastic since
\begin{equation*}
\sum_{i=0}^{\infty } {\| D{F_{a}}^{i}(c_{0})\| }^ {-1/ 2} < \infty .
\end{equation*}
Furthermore, one can show \cite{Nowicki I} that
$\exists$ a constant $K < \infty$ such that $\forall \epsilon >0$
\begin{equation*}
| {F_{a}} ^{n} (c-\epsilon , c+\epsilon) | < K\epsilon.
\end{equation*}

\begin{remark}
There are other terminologies that are equivalent to ``stochastic" and ``regular": \textbf{uniformly hyperbolic} and \textbf{non-uniformly hyperbolic}. In the sense of Pesin theory (that is, the invariant measure automatically has a positive characteristic exponent \cite {Lyubich III}), stochastic quadratic maps can be called non-uniformly hyperbolic. Meanwhile, since regular quadratic maps are uniformly expanding outside the basin of the attracting cycle, they are also called uniformly hyperbolic. Therefore, we can claim that almost any real quadratic map is hyperbolic. See more discussion in references \cite {Lyubich I} and \cite {Lyubich II}.
\end{remark}

\item From the discussions above, we obtain the other version of the Theorem 2.5:
\begin{theorem}
\emph{(M. Lyubich \cite{Lyubich I}\cite{Lyubich III}, 2000)}
For Lebesgue-almost every $a \in (0, 2]$,  the map $F_{a}$ is either stochastic (non-uniformly) or regular (uniformly). \qed
\end{theorem}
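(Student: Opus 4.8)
The plan is to obtain this statement as a reformulation of the preceding M. Lyubich (2002) dichotomy (Theorem 2.5), recast in the hyperbolicity vocabulary introduced for $F_a$. First I would make the two definitional translations explicit. By the definition of \emph{stochastic} recalled above, a unimodal map is stochastic precisely when it admits an invariant measure absolutely continuous with respect to Lebesgue measure; and, as the term is used here, a map is \emph{regular} precisely when it has a periodic attracting orbit. Under these two dictionaries the two alternatives of Theorem 2.5 are exactly the alternatives ``stochastic'' and ``regular'', so the full-measure conclusion is inherited unchanged.

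The substance of the argument is to justify the parenthetical adjectives, namely that the stochastic alternative is genuinely \emph{non-uniformly} hyperbolic and the regular alternative genuinely \emph{uniformly} hyperbolic. For the stochastic case I would reuse the Lyapunov computation carried out earlier in this subsection: if $\mu_a$ denotes the absolutely continuous ergodic measure, then $\log|F_a'|$ is $\mu_a$-integrable and, combining the Birkhoff and Oseledec theorems, the Lyapunov exponent equals $\lambda_a = \int \log|F_a'| \, d\mu_a > 0$. A positive exponent for an invariant measure not carried by an attracting cycle is precisely the Pesin-theoretic meaning of non-uniform hyperbolicity, as the preceding remark records. For the regular case I would invoke that $F_a$ is S-unimodal: the negative Schwarzian derivative forces the critical orbit into the basin of the attracting cycle and yields uniform expansion on the complementary invariant set, which is uniform hyperbolicity.

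I would then assemble the pieces. Theorem 2.5 asserts that for Lebesgue-almost every $a \in (0,2]$ the map $F_a$ has a periodic attracting orbit or an absolutely continuous ergodic measure. By the first translation the latter case is ``stochastic'', by the second the former case is ``regular'', and the two refinements above supply ``(non-uniformly)'' and ``(uniformly)''. This yields the desired dichotomy on the same full-measure parameter set.

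I expect the genuine obstacle to be the verification that ``regular'', in the sense of possessing an attracting cycle, coincides with \emph{uniform} hyperbolicity rather than merely some weaker form of expansion. This step rests essentially on the negative Schwarzian derivative of $F_a$, which rules out accumulation of the critical orbit on the boundary of the basin and forces honest uniform expansion on the hyperbolic complement; without the S-unimodal hypothesis the two notions need not agree. The deeper difficulty, the measure-theoretic dichotomy itself, is concealed in Theorem 2.5, which I take as given.
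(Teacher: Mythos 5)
Your proposal is correct and takes essentially the same route as the paper: the paper presents this theorem as ``the other version of Theorem 2.5,'' obtained by translating ``absolutely continuous ergodic measure'' into \emph{stochastic} (hence non-uniformly hyperbolic, via the positive Lyapunov exponent and Pesin theory recorded in the preceding remark) and ``periodic attracting orbit'' into \emph{regular} (hence uniformly hyperbolic, via uniform expansion outside the basin of the attracting cycle), which is exactly your two-dictionary argument. The only difference is emphasis: you spell out the justification of the parenthetical adjectives in more detail than the paper, which simply cites its Remark on the equivalence of terminologies.
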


\item In order to magnify and study a class of dynamical systems with small-scale structures, we can apply the method of renormalization to act on this class of dynamical systems. Mathematically, we usually construct the renormalization operator as a return map and there are several methods of construction according to the class of dynamical systems we are focusing on. For a stochastic map, the absolutely continuous invariant measure is supported on a cycle of intervals with disjoint interiors. If a unimodal map has such a cycle of intervals, we call this map is \emph{renormalizable}. How many times a map can be renormalized depends on the map itself. According to the number of times we can renormalize the quadratic maps, we classify them as ``at most finitely" or ``infinitely" renormalizable. For more details and discussion, see \cite{Lyubich I} and \cite {Lyubich II}.

\clearpage
\subsection{Dynamics of the H\'{e}non Map}
\item Now let us consider the two-dimensional case and introduce an unfolding of the quadratic family, the H\'{e}non Map, $F_{a,b}: \mathbb{R}^{2} \rightarrow \mathbb{R}^{2}$, where $F_{a,b} (x, y)= (1-ax^{2}+y, bx)$ ($a, b \in \mathbb{R}$), which was suggested as a simplified model of the Poincar\'{e} map of the Lorentz system and is chaotic when the two parameters take the canonical values a=1.4 and b=0.3 (where a strange attractor, H\'{e}non attractor, emerges)(the upper left figure in Fig.1) \cite{H'{e}non}.

\begin{figure}[h]
\centering
\centerline{
\includegraphics[scale=0.4]{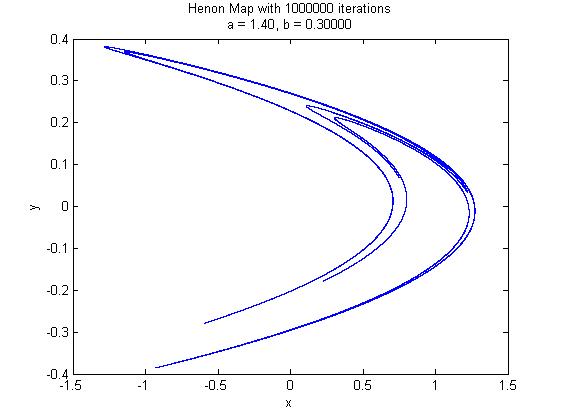}
\includegraphics[scale=0.4]{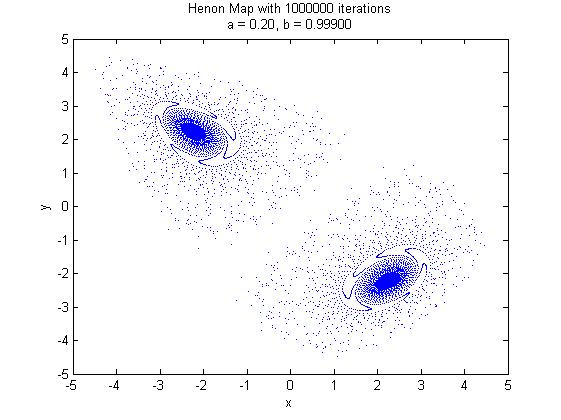}}
\centerline{
\includegraphics[scale=0.4]{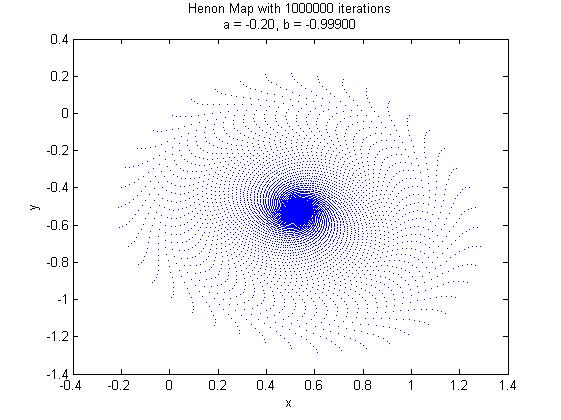}
\includegraphics[scale=0.4]{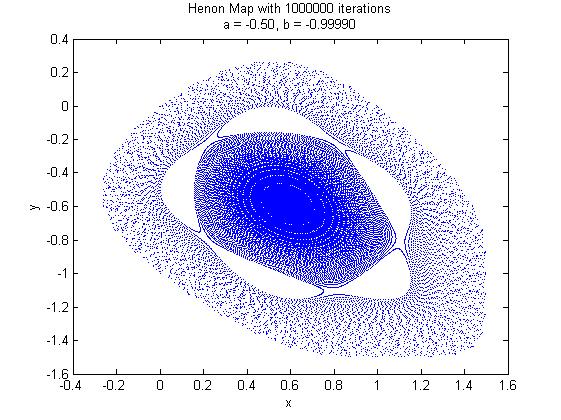}}
\caption{\small An orbit for a of H\'{e}non Map for Different Parameters a and b}
\end{figure}

\item One can easily show that the H\'{e}non map is injective, and its inverse is also injective when $b \neq 0$. Meanwhile, if we take different values of the parameter a in terms of the parameter b, then we have the following theorem:
\begin{theorem}
For the H\'{e}non map $F_{a,b}: \mathbb{R}^{2} \rightarrow \mathbb{R}^{2}$, where $F_{a,b} (x, y)= (1-ax^{2}+y, bx)$ $(a, b \in \mathbb{R})$, \\
(1) when $a< -\dfrac{1}{4}{(1-b)}^2$, there is neither fixed nor periodic point;\\
(2) when $-\dfrac{1}{4}{(1-b)}^2 < a< \dfrac{3}{4}{(1-b)}^2$, there are two fixed points, one is attracting, while the other is repelling;\\
(3) when $\dfrac{3}{4}{(1-b)}^2 < a$, there are two attracting periodic points. \qed
\end{theorem}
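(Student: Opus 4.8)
The plan is to collapse everything onto the first coordinate. Writing an orbit as $(x_n,y_n)$ with $y_n=bx_{n-1}$, the dynamics reduces to the second-order recurrence $x_{n+1}=1-ax_n^2+bx_{n-1}$. A fixed point then satisfies $ax^2+(1-b)x-1=0$ together with $y=bx$, so the real fixed points are controlled by the discriminant $\Delta=(1-b)^2+4a$. I would first record that $\Delta<0$ (that is, $a<-\tfrac14(1-b)^2$) yields no real fixed point, whereas $\Delta>0$ yields exactly two, $x_\pm=\frac{-(1-b)\pm\sqrt{\Delta}}{2a}$. This already supplies the existence counts in (1)--(3).

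For the ``no periodic point'' half of (1) a direct search is hopeless, so instead I would run an averaging/Cauchy--Schwarz argument. Given a period-$p$ orbit $x_1,\dots,x_p$, summing the recurrence over one period and using $\sum x_{n+1}=\sum x_n=\sum x_{n-1}=:S$ gives $(1-b)S=p-aQ$ with $Q=\sum x_n^2$. Cauchy--Schwarz supplies $Q\ge S^2/p$, and substituting (while watching the sign of $a$) produces the quadratic inequality $aS^2+p(1-b)S-p^2\ge0$ in $S$. Since $a<0$ this is a downward parabola, so the existence of a real $S$ forces its discriminant $p^2\big[(1-b)^2+4a\big]\ge0$, i.e.\ $a\ge-\tfrac14(1-b)^2$, contradicting the hypothesis. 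Hence no periodic orbit of any period (fixed points included) exists, proving (1).

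The plan for (2) is a linear-stability computation, and I expect $a=-\tfrac14(1-b)^2$ and $a=\tfrac34(1-b)^2$ to be exactly a saddle-node and a period-doubling threshold. The Jacobian $DF_{a,b}=\begin{pmatrix}-2ax&1\\ b&0\end{pmatrix}$ has determinant $-b$ everywhere, so its eigenvalues solve $\lambda^2+2ax\lambda-b=0$. Using the standard stability-triangle conditions ($|b|<1$ and $-(1-b)<2ax<1-b$) together with $2ax_\pm=-(1-b)\pm\sqrt{\Delta}$, I would show that $x_-$ always fails the conditions (its trace is too large, forcing an eigenvalue $>1$), so it is repelling/saddle throughout, while $x_+$ satisfies both inequalities precisely when $-\tfrac14(1-b)^2<a<\tfrac34(1-b)^2$, giving the attracting fixed point of (2). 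At $a=\tfrac34(1-b)^2$ the $x_+$-eigenvalue reaches $-1$, signalling the period doubling.

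Finally, for (3) I would locate the period-$2$ orbit directly. With $x_0\ne x_1$ the pair satisfies $x_0+x_1=\frac{1-b}{a}$ and $x_0x_1=\frac{(1-b)^2}{a^2}-\frac1a$, so $x_0,x_1$ are the roots of a quadratic whose discriminant is nonnegative exactly when $a\ge\tfrac34(1-b)^2$; this is where the orbit is born, consistent with the eigenvalue-$(-1)$ bifurcation above. Computing the eigenvalues of $DF^2=DF(x_1,y_1)\,DF(x_0,y_0)$ then shows the orbit is attracting just past the threshold, giving the two attracting periodic points. The main obstacles I anticipate are: carrying out the no-periodic-point estimate with the correct sign bookkeeping; pinning down the loose word ``repelling'' (here $x_-$ is genuinely a saddle rather than a source, since $|\det|=|b|<1$ forbids two eigenvalues outside the unit disk); and being honest that the attractivity asserted in (3) is really a local statement near $a=\tfrac34(1-b)^2$, since for larger $a$ the period-$2$ orbit itself eventually loses stability.
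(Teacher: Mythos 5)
The paper never proves this theorem: it is stated with a tombstone symbol and no argument whatsoever, so there is no ``paper proof'' to compare against, and your proposal has to stand on its own. Judged that way, it is correct and essentially complete. The routine parts all check out: the fixed-point equation $ax^2+(1-b)x-1=0$ with discriminant $\Delta=(1-b)^2+4a$, the Jacobian data $\operatorname{tr}=-2ax$, $\det=-b$, the identities $2ax_{\pm}=-(1-b)\pm\sqrt{\Delta}$, the stability-triangle verdict that $x_-$ is always unstable while $x_+$ is attracting exactly for $-\tfrac14(1-b)^2<a<\tfrac34(1-b)^2$, the eigenvalue $-1$ at the upper threshold, and the period-two orbit with $x_0+x_1=\tfrac{1-b}{a}$, $x_0x_1=\tfrac{(1-b)^2}{a^2}-\tfrac1a$, born precisely at $a=\tfrac34(1-b)^2$. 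The one genuinely non-routine step, ruling out periodic orbits of \emph{every} period in case (1), is handled correctly by your averaging argument: summing $x_{n+1}=1-ax_n^2+bx_{n-1}$ over a period gives $(1-b)S=p-aQ$, and $Q\ge S^2/p$ with $a<0$ forces $aS^2+p(1-b)S-p^2\ge0$ to have a real solution, hence $(1-b)^2+4a\ge0$, a contradiction; this is the classical argument for the H\'{e}non map and is exactly what the paper's bare statement needs. Your two caveats are also well taken and worth recording, since they identify where the theorem as printed is too loose: it implicitly requires $|b|<1$ (otherwise $|\det DF_{a,b}|=|b|\ge1$ forbids attracting orbits, and with $|b|<1$ the ``repelling'' point $x_-$ is really a saddle), and claim (3) is only true in a window above $\tfrac34(1-b)^2$ — indeed $\operatorname{tr} DF^2 = 4(1-b)^2-4a+2b$ along the two-cycle equals $1+b^2$ at birth and reaches $-(1+b^2)$ at $a=(1-b)^2+\tfrac14(1+b)^2$, where the cycle itself period-doubles and stability is lost.
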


\begin{figure}[h]
\centering
\centerline{
\includegraphics[scale=0.35]{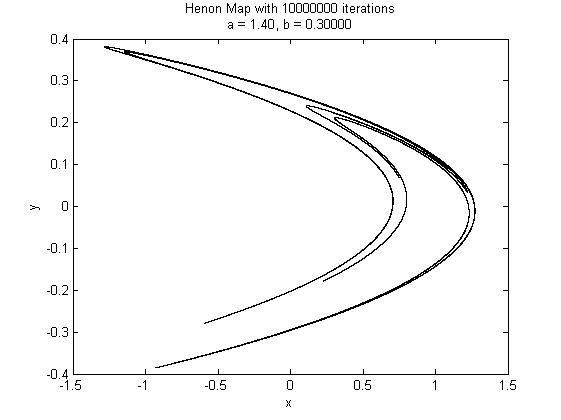}
\includegraphics[scale=0.35]{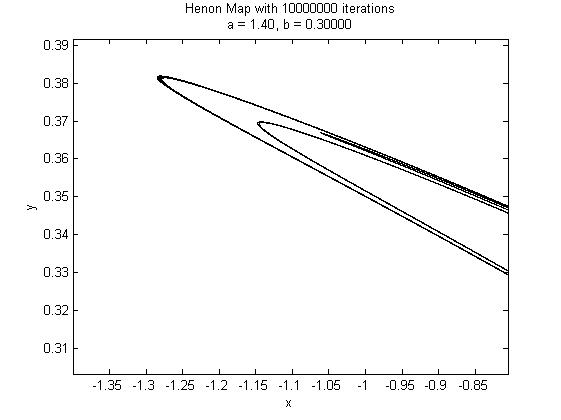}
\includegraphics[scale=0.35]{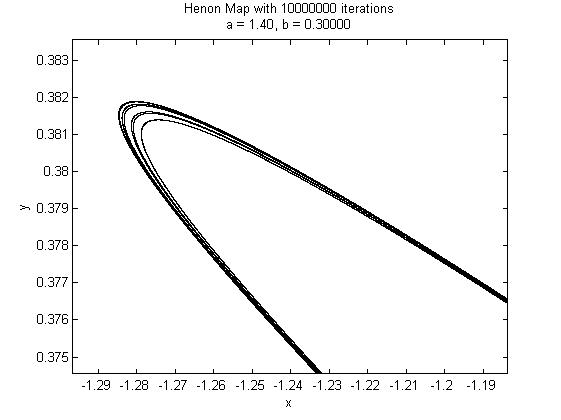}}
\centerline{
\includegraphics[scale=0.35]{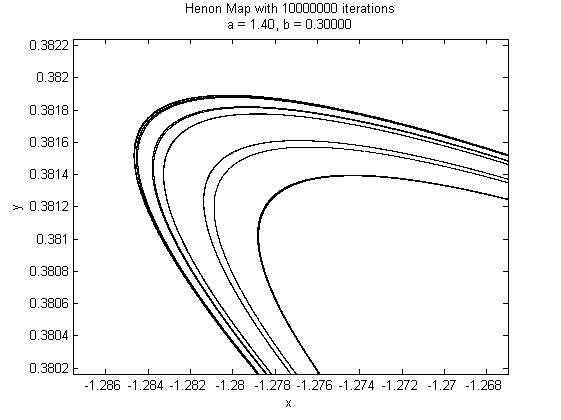}
\includegraphics[scale=0.35]{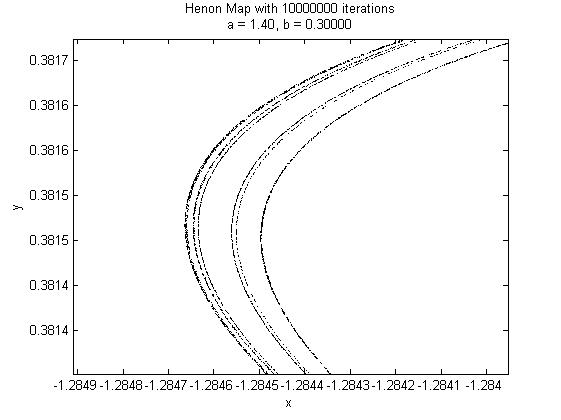}
\includegraphics[scale=0.35]{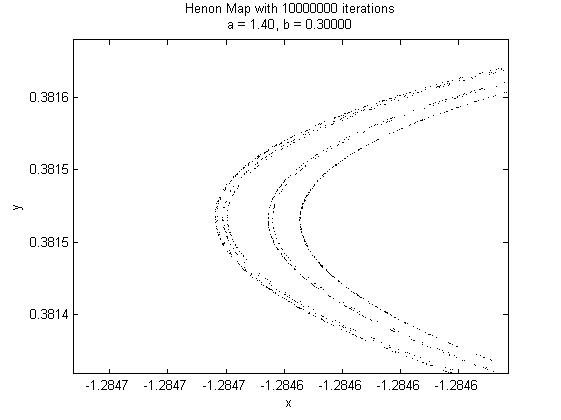}}
\caption{\small Self-Similarity of the H\'{e}non Attractor ($a = 1.4$ and $b = 0.3$)}
\end{figure}

\begin{figure}[h]
\centering
\centerline{
\includegraphics[scale=0.4]{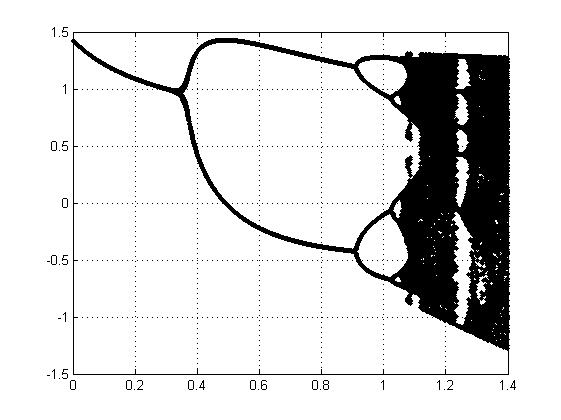}
\includegraphics[scale=0.4]{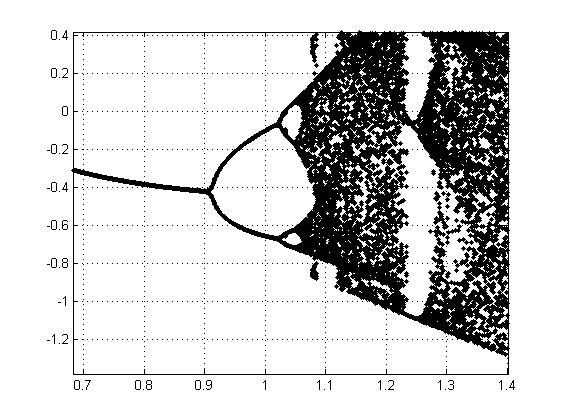}}
\centerline{
\includegraphics[scale=0.4]{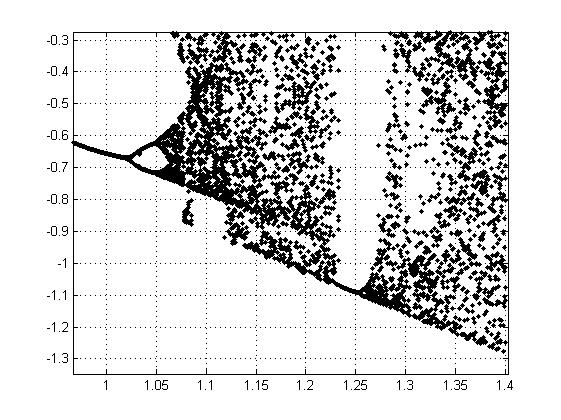}
\includegraphics[scale=0.4]{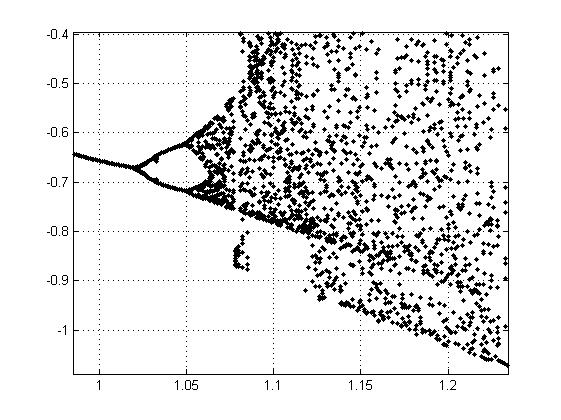}}
\caption{\small The Bifurcation Diagrams of H\'{e}non Map when $b = 0.3$, $x_{0} = 0$ and $y_{0} = 0$.}
\end{figure}

\item Let us consider the convergent values of $x$ corresponding to different values of $a$. When $b = 0.3$, if setting the initial values $x_{0} = 0$ and $y_{0} = 0$, one can show that the value of $x$ in the H\'{e}non map receives real values when $a \in [0, 1.4]$, and the corresponding converged value of $x$ is $x \in [-1.5, 1.5]$ (the graphs in shown in the Fig.3). Based on the numerical experiment, we can see that when $a \in [0, 0.32]$, the sequence of he points on the orbit of $x_{0}$ converges to a fixed point independent on the initial values $x_{0}$ and $y_{0}$. And when the value of $a \in [0.32, 0.9]$, this sequence converges to a periodic orbit of period two. If we change the value of parameter b to be $b = 0.4$, then the we will see the points of period one, two and four when $a$ is 0.2, 0.5 and 0.9, respectively. From the analysis above, we find that not only the H\'{e}non map has fractal structures, but also different chaotic attractors can exist simultaneously for a range of values of parameter $a$.  

\item Now we consider the topological and statistical properties of the H\'{e}non map. M. Benedicks, L. Carleson and L. Young \cite{Bendicks} \cite{young I} have provided a global of the dynamics of the H\'{e}non map from these two perspectives:
\begin{theorem}
\emph{(M. Benedicks, and L. Carleson \cite{Bendicks}, 1991; M. Benedicks, and L. Young \cite{young I}, 1993)}
There exists a positive Lebesgue measure set S of parameters such that for each $(a,b) \in S$ the H\'{e}non map $F_{a,b}$ possesses the following properties: \\
(1) there exists an open set $U \subset \mathbb{R}^{2}$ such that $\overline{F_{a,b}(U)}\subset U$ and $\Lambda = {\bigcap}_{n=0}^{\infty} {F_{a,b}}^{n}(U)$ attracts all orbits of $x \in U$; \\
(2) there is $x_{0} \in \Lambda$ whose orbit is dense in $\Lambda$, and there exists $c > 0$ such that $||D {F_{a,b}}^{n}(x_{0})|| \geq e^{cn}$ for all $n \geq 1$; \\
(3) $F_{a,b}$ has a unique physical measure on $\Lambda$. 
\qed
\end{theorem}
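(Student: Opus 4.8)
The plan is to treat this as a singular perturbation of the one-dimensional family of Section 2.2. When $b = 0$ the H\'enon map degenerates, $F_{a,0}(x,y) = (1 - ax^{2}, 0)$, collapsing $\mathbb{R}^{2}$ onto the $x$-axis and restricting there to the quadratic map $F_{a}$. By Jakobson's Theorem there is a positive-measure set of parameters $a$ near $2$ for which $F_{a}$ is stochastic, i.e.\ the orbit of the critical point $c_{0} = 0$ expands exponentially (the summability / Collet--Eckmann condition of Nowicki). I would fix such a one-dimensional ``good'' parameter $a^{*}$ and study the two-parameter family for $(a,b)$ in a small neighborhood of $(a^{*}, 0)$, exploiting the strong dissipation $|\det DF_{a,b}| = |b| \ll 1$.

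First I would dispose of part (1). Since the Jacobian is the constant $-b$ with $|b| < 1$, the map is uniformly area-contracting, so a thickened neighborhood $U$ of the segment $[-1,1] \times \{0\}$ can be chosen with $\overline{F_{a,b}(U)} \subset U$; the attractor $\Lambda = \bigcap_{n \ge 0} F_{a,b}^{n}(U)$ is then the closure of the unstable manifold $W^{u}(p)$ of the saddle fixed point $p$, and it attracts all orbits of $U$ by construction. The conceptual heart of the argument is to replace the single critical point of the $1$-D map by a dynamically defined \emph{critical set} $\mathcal{C} \subset \Lambda$ --- the locus where the expanding direction becomes tangent to the strongly contracting direction and folding occurs. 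Away from $\mathcal{C}$ one has a dominated splitting inherited from the $b=0$ picture; on $\mathcal{C}$ it breaks. This set is not given a priori and must be built inductively, scale by scale, along the orbit.

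The main work, and the principal obstacle, is the parameter-exclusion scheme. I would call $(a,b)$ \emph{good} if the orbit $z_{n}$ of the critical value $z_{0}$ maintains exponential growth $\|DF_{a,b}^{n}(z_{0})\| \ge e^{cn}$ together with a ``basic assumption'' $\mathrm{dist}(z_{n}, \mathcal{C}) \ge e^{-\alpha n}$ keeping the orbit away from the critical set, and I would construct the good set by induction on $n$. At each return of the orbit to the critical region one runs a \emph{binding} argument: the returning orbit is compared with the critical orbit itself, so that the temporary loss of expansion caused by the fold is recovered over a controlled binding period, exactly as the expansion of $F_{a^{*}}$ at $c_{0}$ drives the $1$-D estimates. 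Deep returns force one to delete parameter intervals, and the crux is the measure estimate showing that the total excluded measure is only a small fraction of the whole, so that a positive-measure surviving set $S$ remains. This estimate --- coupling the folding geometry (the tangential position of each return) to the quantitative recovery of expansion, uniformly to all orders --- is the genuinely hard part and is where the full force of the Benedicks--Carleson induction is required.

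Granting the good set $S$, parts (2) and (3) follow from the resulting nonuniform hyperbolicity. The estimate $\|DF_{a,b}^{n}(z_{0})\| \ge e^{cn}$ is precisely the growth asserted in (2), and density of the orbit of $z_{0}$ in $\Lambda$ follows since $\Lambda = \overline{W^{u}(p)}$ and $z_{0}$ lies on $W^{u}(p)$, whose forward orbit fills the unstable lamination. For (3) I would build the physical measure as an SRB measure: push forward Lebesgue measure carried on local unstable curves and pass to a Ces\`aro weak-$*$ limit $\tfrac{1}{n}\sum_{k=0}^{n-1}(F_{a,b}^{k})_{*} m$. Bounded distortion along unstable curves, guaranteed by the Collet--Eckmann growth, makes the conditional measures on unstable manifolds absolutely continuous, and the uniform contraction in the stable direction together with a Hopf-type ergodicity argument yields a single ergodic invariant measure $\mu$ that is absolutely continuous on unstables and has no zero Lyapunov exponents. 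By the characterization of physical measures given above and the Birkhoff Ergodic Theorem, $\mu$ is then the unique physical measure on $\Lambda$, establishing (3).
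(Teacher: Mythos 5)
The paper does not prove this statement at all: it is quoted as a known result of Benedicks--Carleson and Benedicks--Young, stated with a \qed and a citation, so there is no internal proof to compare your attempt against. Judged on its own merits, your proposal is a faithful reconstruction of the strategy actually used in those papers --- perturbation off the stochastic one-dimensional parameters supplied by Jakobson's theorem, the inductively defined critical set, the basic assumption $\mathrm{dist}(z_{n},\mathcal{C})\ge e^{-\alpha n}$, binding periods, parameter exclusion for (1)--(2), and the push-forward/Ces\`aro construction of the SRB measure with absolutely continuous conditionals on unstable curves for (3). Your identification of $\Lambda$ with $\overline{W^{u}(p)}$ and of the determinant $-b$ as the source of dissipation are likewise correct.

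However, as a proof the proposal has a genuine gap, and you name it yourself: the parameter-exclusion measure estimate --- showing that the set of parameters surviving the induction at every stage has positive Lebesgue measure --- is precisely the content of the theorem, and your text black-boxes it (``the full force of the Benedicks--Carleson induction is required''). Everything else in the outline is either soft (part (1)) or conditional on that estimate, so nothing has actually been proved; the argument occupies roughly one hundred pages in the original Annals paper and cannot be summarized away. Two smaller points in the same spirit: in (2), density of the orbit of $z_{0}$ in $\overline{W^{u}(p)}$ is not automatic from $z_{0}$ lying on $W^{u}(p)$ --- a single forward orbit filling out the closure of an immersed curve requires a separate topological-transitivity argument, which Benedicks--Carleson prove; and in (3), uniqueness of the physical measure needs the ergodicity/Hopf argument to be carried out on a nonuniformly hyperbolic attractor with a critical set, which is the substance of Benedicks--Young rather than a routine consequence of bounded distortion. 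So the proposal is the right roadmap, but it is a roadmap, not a proof.
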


\begin{remark}
The second item in this theorem implies there exists a positive Lyapunov exponent in a dense orbit under the H\'{e}non map, which means that the attractor is sensitively dependent on the initial conditions for the parameters in the set $S$.  
\end {remark}

\item According to the discussion above, we already know that there exists a strange attractor for the H\'{e}non map. To measure the extent of the chaoticity of a system, there are several differential approaches, such as topological entropy and mixing. Here we restrict our discussion to the mixing of the H\'{e}non map; see \cite{Alessandro} for more discussion about the topological entropy of the H\'{e}non map.  

\item Roughly speaking,``mixing" means ``asymptotically independent". Firstly, we introduce a concept, \textbf{cross correlation function}:
\begin{equation*}
C_{g,h}(n) = \int g(x)h(f^{n}(x))d\mu (x) - \int g(x) d\mu (x) \int h(f^{n}(x)) d\mu (x),
\end{equation*}
where $g$ and $h$ are two square integrable observables. If we choose these two observables as the characteristic functions, namely, $g = \chi _{A} $ and $h = \chi _{B}$, then the cross correlation function takes the following form:
\begin{equation*}
C_{\chi _{A}, \chi _{B}}(n) = \int _{A} \chi _{B} \circ f^{n}(x)d\mu - \mu (A) \mu (B) = \mu (A \cap f^{n}(B)) - \mu (A)\mu (B). 
\end{equation*}
If the system loses the memory of the initial conditions after a long period of time, then we can expect that $C_{\chi _{A}, \chi _{B}}(n)$ approaches 0 and obtain the definition of ``mixing". Mathematically, for a dynamical system, the $f$-invariant measure $\mu$ is \textbf{mixing} if for any measurable subsets $A$ and $B$ in the phase space, we have
\begin{equation*}
\lim_{n \to \infty} \mu (A \cap f^{n}(B)) = \mu (A)\mu (B). 
\end{equation*}
It is worth to mention that mixing implies ergodicity, but the converse is not true \cite{Eckmann}. Based on the concepts introduced above, the \textbf{mixing rate} of a system is related to the decay of the $C_{g,h}(n)$. We say the decay of the correlation is exponential if $h(f^{n})$ and $h$ become uncorrelated exponentially fast as $n$ tends to infinity. 

\item Now we introduce a theorem about the extent of chaoticity in terms of the mixing rate of H\'{e}non map \cite{young I} :
\begin{theorem}
\emph{(M. Benedicks, and L. Young \cite{young I}, 1993)}
With respect to the unique physical measure on $\Lambda$, H\'{e}non map $F_{a,b}$ has exponential decay of correlations for each $(a,b) \in S$. 
\qed
\end{theorem}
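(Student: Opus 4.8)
The plan is to realize the attractor $\Lambda$ as the image of a \emph{tower} (a Markov extension) whose return-time function has exponentially decaying tails, and then to invoke the general principle that such a tower yields exponential decay of correlations for H\"older observables with respect to its SRB measure. First I would invoke the preceding Benedicks--Carleson--Young theorem: for $(a,b) \in S$ the map $F_{a,b}$ has an attractor $\Lambda$ carrying a unique physical measure $\mu$, and the exponential growth $\|DF_{a,b}^{n}(x_{0})\| \ge e^{cn}$ along a dense orbit makes $\Lambda$ nonuniformly hyperbolic; the physical measure $\mu$ is then the SRB measure, with conditional densities along unstable leaves.

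Next I would construct a reference set $\Lambda_{0} \subset \Lambda$ with a \emph{hyperbolic product structure}, namely $\Lambda_{0} = \left( \bigcup_{\alpha} \gamma^{u}_{\alpha} \right) \cap \left( \bigcup_{\beta} \gamma^{s}_{\beta} \right)$, an intersection of a continuum of local unstable curves with a continuum of local stable curves in which each $\gamma^{u}_{\alpha}$ meets each $\gamma^{s}_{\beta}$ in exactly one point, and having positive $\mu$-measure. On $\Lambda_{0}$ I would define a return map $R = F_{a,b}^{\tau}$ with integer return time $\tau$, chosen so that $R$ carries unstable leaves fully across $\Lambda_{0}$ in a Markov fashion and so that the induced system is uniformly hyperbolic with bounded distortion along unstable leaves. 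This reduces the theorem to checking the abstract tower hypotheses --- forward contraction on stable leaves, backward contraction on unstable leaves, bounded distortion of the Jacobian of $R$, and absolute continuity of the stable foliation --- together with control of the tail of $\tau$.

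The analytic heart, and the main obstacle, is the estimate $\mu\{\tau > n\} \le C \theta^{n}$ for some $\theta < 1$. The difficulty is precisely that $F_{a,b}$ fails to be uniformly hyperbolic: the quadratic fold creates a critical region where expansion along the unstable direction collapses, so an orbit making a deep approach to that region must pass through a \emph{binding period} during which it shadows the critical value's orbit before expansion is recovered. I would quantify this recovery using the same Benedicks--Carleson inductive estimates that cut out the parameter set $S$ --- the exponential growth along the critical orbit together with a large-deviation control on the frequency and depth of close returns --- to show that points with long return times have exponentially small $\mu$-measure. Proving that deep returns are exponentially rare, with uniform distortion bounds surviving each binding period, is where essentially all of the work lies, and it is exactly this step that forces the positive-measure parameter exclusion rather than any soft argument.

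Finally, with a tower of exponential return tails in hand, I would verify the remaining nondegeneracy condition that the greatest common divisor of the values of $\tau$ equals $1$ (which rules out a cyclic decomposition and gives genuine mixing), and then conclude by the abstract correlation-decay theorem for such towers: for H\"older observables $g$ and $h$ there exist constants $C > 0$ and $\theta < 1$ with $|C_{g,h}(n)| \le C \theta^{n}$. This is the asserted exponential decay of correlations with respect to the physical measure on $\Lambda$, and since exponential decay of correlations implies mixing, it also subsumes the mixing property discussed above.
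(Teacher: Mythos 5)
The paper does not actually prove this theorem---it is cited from Benedicks--Young with a \qed, and the paper's only commentary is the remark that follows, which says the key point is that orbits suffer expansion setbacks near a localized critical set and that the measure of points approaching this set in a counter-productive way decays exponentially to $0$. Your proposal is exactly that idea made precise (the tail estimate $\mu\{\tau > n\} \le C\theta^{n}$ for the return time of a Markov tower over a set with hyperbolic product structure), and it matches the standard proof architecture in the literature, so your sketch is consistent with---and considerably more detailed than---the paper's own treatment.
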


\begin{remark}
In the proof of this theorem, a important property one should use is the existence of a direction of non-uniform expansion. However, orbits suffer setbacks in expansion when they pass near a localized set of critical points. The decay of correlations takes into account the set of points approaching in a counter-productive way the source of non-expansion. The measure of this set decays exponentially fast to 0.
\end{remark}

\clearpage
\section{The Dynamics of Complex Quadratic Maps under Singular Perturbations $F_{\lambda , m}(z) = z^{2}+ \dfrac{\lambda }{{z} ^{m}}$ and $G_{\beta ,m}(z) = z^{2} + \dfrac{\beta }{\bar{z} ^{m}}$, where $m \in \mathbb{N}$ and $\lambda ,\beta \in \mathbb{C}$.}
\subsection{Preliminary}
\item The goal of studying \textbf {complex dynamics} is to understand the iteration processes of complex analytic functions, which include polynomials, rational maps, entire transcendental maps, and meromorphic functions, on complex plane $\mathbb{C}$, Riemann Sphere $\mathbb{\widehat {C}}$, and even higher dimensional complex plane $\mathbb{C}^{n}$. In complex dynamics, the two most fundamental sets are Julia sets and the Mandelbrot set: the former is geometrically defined as the boundary of the set of the points whose orbits tend to infinity for any fixed map, while the latter is the set of values of the parameter $c$ for which the orbits of $z_{0} = 0$ remains bounded under the complex polynomial $z_{n+1} = z_{n}^{2} + c $ \footnotemark[1].
\renewcommand{\thefootnote}{\fnsymbol{footnote}}
\footnotetext[1] {\emph {\cite{Roeder} provides a different approach to define the Mandelbrot set.}}

The complement of the Julia set is called the Fatou set, whose dynamics, however, is usually relatively tedious (In most cases points in the Fatou set approach an attracting periodic orbits or infinity, although there are some other possibilities). For the details of the history of the complex dynamics, see \cite{Alexander}.
 
\begin{remark}
Three basic classifications of fixed points in dynamical systems are attracting, repelling and neutral points (i.e. the x-values at which $|F'(x)| < 1$, $|F'(x)| > 1$, and $|F'(x)| = 1$, respectively). \\
(1) According to the Contraction Mapping Principle, for all attracting fixed points $z_{0}$, there is an open neighbourhood $U(z_{0}, \epsilon)$ such that $F^{n}(z) \to z_{0}$ as $n \to \infty$, $\forall$ $z \in U(z_{0}, \epsilon)$. \\
(2) For the dynamics of an open neighbourhood of a repelling fixed point $z_{0}$, we can apply the Inverse Function Theorem with the conclusion in (1) to prove that in linear case, $F^{n}(z) \to \infty$ as $n \to \infty$, $\forall$ $z \in U(z_{0}, \epsilon)$. \\
(3) The dynamics nearby a neutral fixed point is much more complicated than the previous two cases. We may obtain attraction, repulsion or other kinds of dynamics in the open neighbourhoods of a neutral fixed point.
\end{remark}

\item In complex dynamics, the quadratic maps: $F_{a}: \mathbb{C}\rightarrow\mathbb{C}$, where $F_{a}(z) = z^{2} + a$, in which $a \in \mathbb{C}$, have been well studied. For this family, there exists only one critical orbit (the orbit of critical point), and we have the following theorem for its escape dichotomy:
\begin{theorem}
For the quadratic map $F_{a}: \mathbb{C}\rightarrow\mathbb{C}$, where $F_{a}(z) = z^{2} + a$, in which $a \in \mathbb{C}$:\\
(1) If the critical orbit remains bounded, then the its Julia Set is connected; \\
(2) Otherwise, the Julia Set is a Cantor Set (also called ``fractal dust") and $F_{a}$ is conjugate on the Julia Set to one-sided shift  of two symbols.
\end{theorem}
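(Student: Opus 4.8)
The plan is to prove both alternatives through a single device: the nested sequence of preimages of a large disk, together with the escape estimate that controls the dynamics near infinity. First I would record the elementary estimate that for $R := \max(2,|a|)+1$ and $|z|\ge R$ one has $|F_a(z)| \ge |z|(|z|-1) > |z|$, so every orbit that leaves the closed disk $D := \{|z|\le R\}$ escapes to infinity. Consequently the filled Julia set $K := \{z : F_a^n(z) \not\to \infty\}$ satisfies $K \subset D$ and $\overline{F_a^{-1}(D)} \subset \operatorname{int} D$, and the Julia set of the excerpt is $J = \partial K$. Writing $U_0 := D$ and $U_{n+1} := F_a^{-1}(U_n)$, the sets $U_n$ decrease to $K = \bigcap_n U_n$, and each $F_a : U_{n+1} \to U_n$ is a proper degree-two map whose only branch point is the critical point $0$, lying over the critical value $a = F_a(0)$. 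The whole dichotomy is then governed by whether the critical value stays inside these disks, i.e.\ by the fate of the critical orbit.

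For part (2), assume the critical orbit escapes. Then $0 \notin K$, and since $F_a^{n}(a) = F_a^{n+1}(0) \to \infty$ there is a first level $N$ with $a \notin U_{N-1}$; as the $U_n$ are nested this persists for all later levels. Below that level every $U_n$ is a topological disk (a connected branched double cover of a disk is a disk, by Riemann--Hurwitz), while at and beyond it the absence of the critical value forces each preimage to split into two disjoint univalently-covered disks, so that $U_{N-1+k}$ consists of $2^k$ topological disks each mapped by $F_a^k$ conformally onto $U_{N-1}$. Because $0\notin K$, the restriction $F_a|_K$ is a genuine two-to-one covering; labelling the two components of $U_N$ by $0$ and $1$ gives two inverse branches $g_0,g_1 : K \to K$, and I would assign to each $z\in K$ the itinerary $(s_0,s_1,\dots)$ recording which component contains $F_a^k(z)$. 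Showing the diameters of the cylinder sets $g_{s_0}\!\cdots g_{s_k}(K)$ tend to zero makes this itinerary map a homeomorphism onto $\{0,1\}^{\mathbb N}$ conjugating $F_a$ to the one-sided two-shift, and exhibits $K=J$ as a nonempty, perfect, totally disconnected compact set, i.e.\ a Cantor set.

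For part (1), assume the critical orbit is bounded, so $a = F_a(0) \in K \subset U_n$ for every $n$. Then the critical value never leaves, every $F_a : U_{n+1}\to U_n$ is a \emph{connected} branched double cover of a disk and hence $U_{n+1}$ is again a topological disk; thus $K = \bigcap_n U_n$ is a nested intersection of connected compacta and is therefore connected. Since its complement (the basin of infinity) is then a single simply connected domain, the Julia set $J = \partial K$ is connected as well. I would package this more cleanly through B\"ottcher's theorem: near infinity $F_a$ is conformally conjugate to $w\mapsto w^2$ by a unique map $\phi$ tangent to the identity with $\phi(F_a(z)) = \phi(z)^2$; the only possible obstruction to continuing $\phi$ is the critical value, so when the critical orbit stays in $K$ the map $\phi$ extends to a conformal isomorphism from the basin of infinity onto $\mathbb{C}\setminus\overline{\mathbb D}$, immediately giving connectivity of $K$ and of $J$.

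The main obstacle is the shrinking-diameter claim in part (2): this is exactly a uniform expansion statement for $F_a$ on its Julia set. The clean way to secure it is to equip $D$ with its hyperbolic (Poincar\'e) metric; since $\overline{F_a^{-1}(D)}\subset \operatorname{int} D$ is compactly contained, the Schwarz--Pick lemma forces each inverse branch $g_0,g_1$ to contract this metric by a factor bounded away from $1$, so compositions contract diameters geometrically and the cylinder sets collapse to points. A secondary technical point, needed only for the B\"ottcher route to part (1), is to establish B\"ottcher's normal form and to verify that its maximal analytic continuation is obstructed precisely at the critical value (equivalently, at the equipotential through the escaping critical point); both facts are classical but each deserves a line of justification.
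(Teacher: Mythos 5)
The paper itself contains no proof of this theorem: immediately after stating it, the text defers to the cited references (Devaney's books and the papers on the Mandelbrot set), so there is no internal argument to compare yours against. Judged on its own merits, your proposal is a correct outline of the classical argument found in precisely those references: the escape radius $R$, the nested preimages $U_{n+1}=F_a^{-1}(U_n)$ of a large disk, the Riemann--Hurwitz dichotomy (critical value inside at every level forces each $U_n$ to be a topological disk, so $K$ is a nested intersection of continua and hence connected; critical value escaping at level $N$ splits all deeper preimages into conformally-copied pairs of disks), itineraries plus uniform contraction of inverse branches for the Cantor-set/shift conjugacy, and the B\"ottcher coordinate as an alternative packaging of the connected case. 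Your identification of the shrinking-diameter claim as the real technical content is also accurate.

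Two repairs are needed, neither structural. First, the Schwarz--Pick step is applied on the wrong domain: the branches $g_0,g_1$ are not defined on $D$ at all, because the critical value $a$ always lies in $D$ (indeed $|a|\le R-1$ by your choice of $R$), so the inverse of $F_a$ is two-valued with a branch point there. The branches are single-valued only on the disk $U_{N-1}$ from which the critical value has already escaped. The fix is immediate: use the hyperbolic metric of $U_{N-1}$ together with the compact containment $\overline{U_N}\subset \operatorname{int} U_{N-1}$ (obtained by pulling back $\overline{F_a^{-1}(D)}\subset \operatorname{int} D$ inductively), which gives a contraction factor bounded away from $1$ on the compact set $K$; since $g_i(K)\subset K$, compositions then contract geometrically as you claim. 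Second, in part (1) the assertion that the complement of $K$ is a single simply connected domain needs one line of justification: a bounded complementary component $V$ would satisfy $\partial V\subset K$, so the maximum principle applied to $F_a^n$ on $V$ gives $|F_a^n|\le R$ on $V$ for all $n$, forcing $V\subset K$, a contradiction. This line matters because connectivity of $K$ alone does not imply connectivity of $J=\partial K$ (a closed annulus is a connected compact set with disconnected boundary); it is the connectivity of the complement that closes the argument. With these two points supplied, your proof is complete and is essentially the argument of the sources the paper cites.
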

For the proof of this theorem and more discussion about the dynamics of this map, see references 
\cite {Devaney III}, \cite {Devaney I} and \cite {Devaney II}.
 
\item In the following sections, we will discuss the dynamics of the complex quadratic map under singular perturbations. Roughly speaking, \textbf {singular perturbation} means introducing poles into the dynamics of a polynomial. It has been shown that singular perturbations can produce rich interesting and elegant results in ODEs, PDEs and dynamical systems.
\item Two main types of singular perturbations in current dynamics research are holomorphic ones, which take the form $F_{\lambda,c,n,m}:\mathbb{C}\rightarrow\mathbb{C}$, where $F_{\lambda,c,n,m}(z) = z^{n} + c + \dfrac{\lambda}{z^{m}}$; and nonholomorphic ones, which are expressed as  $G_{\beta,c,n,m}:\mathbb{C}\rightarrow\mathbb{C}$, where $G_{\beta,c,n,m}(z) = z^{n} + c + \dfrac{\beta}{\bar{z} ^{m}}$ (In both of these two maps, $n,m \in \mathbb{N}$ and $\lambda, \beta, c \in \mathbb{C}$). In this section, we only consider some simple cases for the singular perturbations of complex quadratic maps (i.e. n=2).

\begin{remark}
Besides the holomorphic and nonholomorphic singular perturbations, there are several other types of perturbations in complex dynamics, such as the real (nonholomorphic but nonsingular) perturbation for quadratic families: $F_{\alpha,c}:\mathbb{C}\rightarrow\mathbb{C}$, where $F_{\alpha,c}(z) = z^{2}+ \alpha \bar {z} + c$, in which $c \in \mathbb{C}$ and $\alpha \in \mathbb{R}$. See \cite {Peckham I} for the dynamics of this family.
\end{remark}

\item Before discussing the singular perturbations, we introduce several notations will be used: (1) $J(F)$ is the Julia set of map $F$; (2)$B_{\lambda }(F)$ is the immediate basin of attraction of $\infty $ for $F$, and $\beta _{\lambda }(F)$ is the boundary of $B_{\lambda }(F)$; (3) $K(F) = \mathbb{C} \setminus \bigcup _{n=1} ^{\infty } {F^{-n}(B_{\lambda }(F))}$ is filled Julia set; (4) $T_{\lambda } (F)$, which is called trap door, is the neighborhood of the pole 0 that is mapped onto $B_{\lambda }(F)$ under $F$ but is disjoint from $B_{\lambda }(F)$ (in other words, $T_{\lambda }(F)$ is an open set about the pole 0 that mapped in an ``m to one" fashion onto $B_{\lambda }(F)$ under $F$.

\item In the following sections, we will discuss the dynamics of $z^{2}$ rather than the more general quadratic family ($z^{2} + c$) under both holomorphic and nonholomorphic singular perturbations with different orders (m); i.e. $F_{\lambda , m}(z) = z^{2}+ \dfrac{\lambda }{{z} ^{m}}$ and $G_{\beta ,m}(z) = z^{2} + \dfrac{\beta }{\bar{z} ^{m}}$, where $m \in \mathbb{N}$ and $\lambda ,\beta \in \mathbb{C}$. 

\clearpage
\subsection{Singular Perturbation of Real Quadratic Family when $m=1$}
\begin{figure}[h]
\centering
\centerline{
\includegraphics[scale = 0.4]{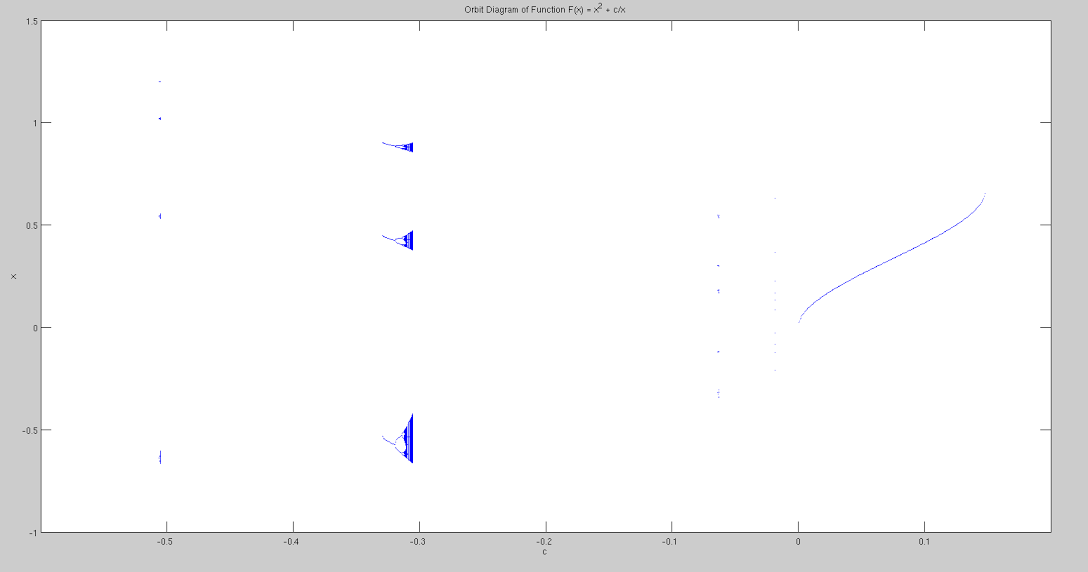}}
\caption{\small The Orbit Diagram of $F_{c}: \mathbb{R}\rightarrow\mathbb{R}$, where $F_{c}(x) = x^{2} + \dfrac{c}{x}$, in which $c \in \mathbb{R}$.} 
\end{figure}

\item Before discussing the complex quadratic families, we firstly study the singular perturbation in the case of the real line $\mathbb{R}$; i.e. of $F_{c}: \mathbb{R}\rightarrow\mathbb{R}$, where $F_{c}(x) = x^{2} + \dfrac{c}{x}$, in which $c \in \mathbb{R}$. To gain an overview of the dynamics for all c, we first observe the \textbf {orbit diagram} of this family (Fig.4) and interpret some interesting dynamics. From the orbit diagram, one can see that the dynamics of $c>0$ and $c<0$ are entirely different. We firstly discuss the dynamics when $c>0$, and then analyze the $c<0$ in the following section. 

\begin{remark}
Orbit diagram shows the asymptotic behaviors of the orbits of critical points for various c-values. It aims to capture the dynamics of a family of maps for many different c-values in one picture\cite{Devaney III}. This helps us to find the attracting periodic orbits of maps, because every attracting periodic orbit attracts a critical point. One should notice that only the points whose orbits are stay bounded will be shown on orbit diagrams. 
\end{remark}

\begin{figure}[h]
\centering
\centerline{
\includegraphics[scale = 0.55]{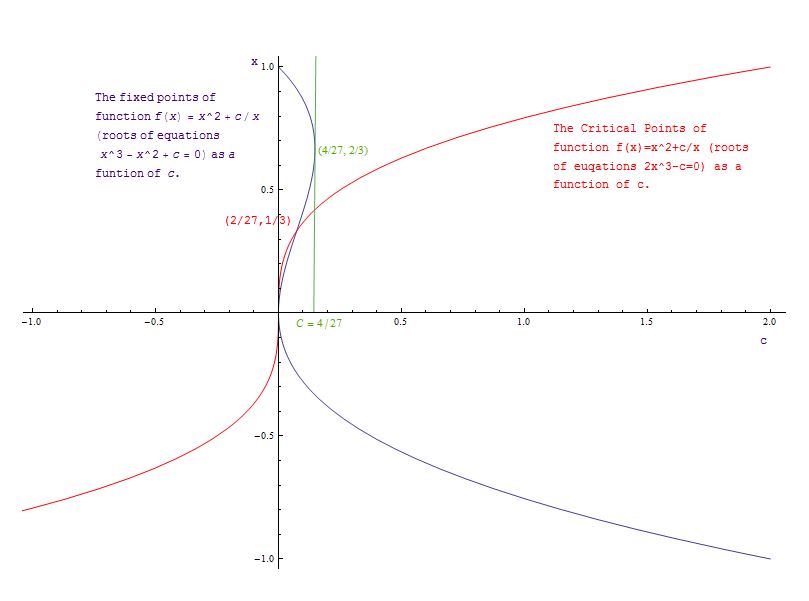}}
\caption{\small Analysis of the Dynamics when $c>0$. In this figure, purple line represents the fixed points as a function of parameter c's; Red line represents the critical points as a function of parameter c's; Green line represents the straight line $c=4/27$, which helps to see the saddle-node point at $(c ,x) = (4/27, 2/3)$.} 
\end{figure}

\begin{remark}
For the quadratic family $F_{c}: \mathbb{R}\rightarrow\mathbb{R}$, where $F_{c}(x) = x^{2} + \dfrac{c}{x}$, in which $c \in \mathbb{R}$, the dynamics at $(c, x) = (0, 0)$ and $(4/27, 2/3)$ are entirely different, although both of them are the two ends of the $S$-shape curve on the orbit diagram. Because $x=0$ is the singular point, its orbit will approach to infinity after only one iteration.  
\end{remark}

\begin{figure}[h]
\centering
\centerline{
\includegraphics[scale=0.3]{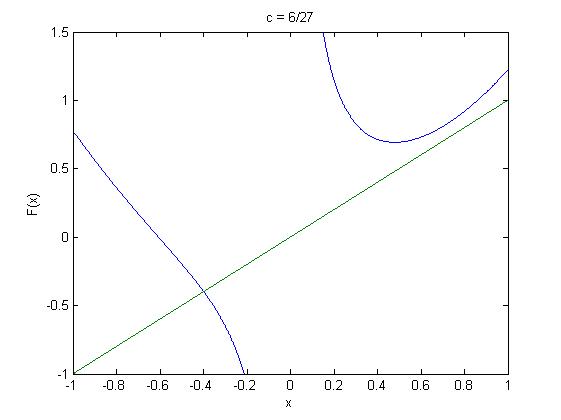}
\includegraphics[scale=0.3]{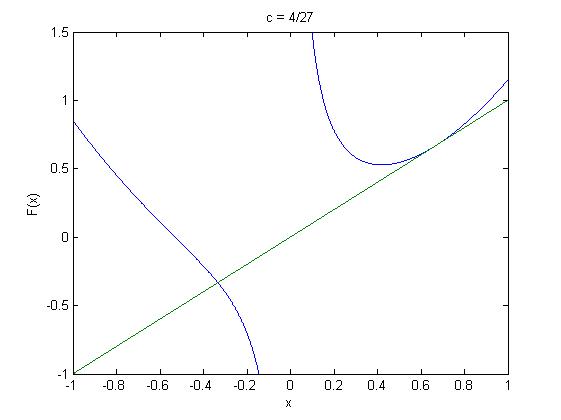}
\includegraphics[scale=0.3]{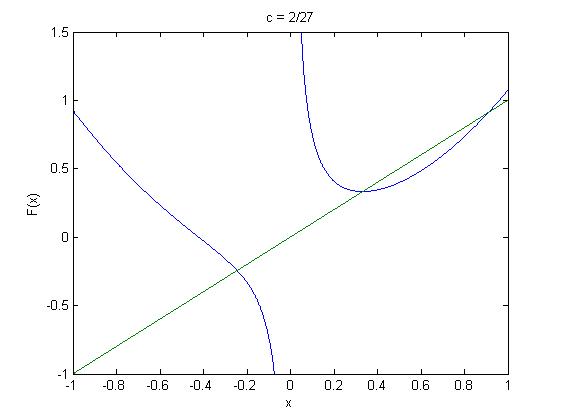}}
\caption{\small A Description of the Saddle-Node Bifurcation - The Graph of $F_{c}(x)$ when $c = \dfrac{6}{27}$, $c = \dfrac{4}{27}$ and $c = \dfrac{2}{27}$, respectively.}
\end{figure}
\item We start analyzing the dynamics in the case of $c>0$ through observing Fig.5. The two curves and straight line represent the fixed points as a function of parameter $c$'s, the critical points as a function of parameter c's, and the straight line $c=4/27$, respectively. They intersect at several points: (0, 0), (2/27, 1/3), and (4/27, 2/3) (we do not discuss (4/27, -1/3) here because the repelling fixed point persists as $c$ is varied). Point (0,0) is where singularity; (2/27, 1/3) is located at center of the left $S$-shape curve, and 1/3 is the superattracting fixed (both attracting fixed and critical) point; point (4/27, 2/3) is where saddle-node bifurcation occurs. More specifically, when $c > 4/27$, there is only one negative fixed point; when $c = 4/27$, there are two fixed points, one is negative while the other is positive; when  $c < 4/27$, there are three fixed points (two of them are positive, while another is negative). Therefore, a saddle-node bifurcation occurs when $c = 4/27$, and the negative fixed point always exists for any nearby value of $c$. For convenience, we denote the fixed points from left to right as $x_{1}$, $x_{2}$ and $x_{3}$. Now we discuss the properties of these fixed points.

\begin{theorem}
For the family $F_{c}: \mathbb{R}\rightarrow\mathbb{R}$, where $F_{c}(x) = x^{2} + \dfrac{c}{x}$, in which $c > 0$ and $c \in \mathbb{R}$ (assume $x \neq 0$), we have\\  
(1) When $c > 4/27$, there exists only one repelling fixed point  $x_{1}$;\\
(2) When $c = 4/27$, there exist two fixed points, $x_{1}$ is repelling while $x_{2}$ is neutral;\\
(3) When $c < 4/27$, there exist three fixed points, $x_{1}$ and $x_{3}$ are repelling while $x_{2}$ is attracting.
\qed
\end{theorem}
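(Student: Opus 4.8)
The plan is to reduce the fixed-point problem to a cubic, count and locate its real roots as $c$ varies, and then classify the stability of each fixed point by evaluating $F_c'$ on the fixed-point locus. First I would write the fixed-point condition $F_c(x) = x$ as $x^2 + c/x = x$; multiplying through by $x \neq 0$ gives the cubic
\begin{equation*}
g(x) := x^3 - x^2 + c = 0,
\end{equation*}
so the fixed points of $F_c$ are exactly the real roots of $g$.

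Second, I would locate these roots by elementary calculus on $g$. Since $g'(x) = 3x^2 - 2x = x(3x-2)$, the cubic has a local maximum at $x = 0$ with $g(0) = c > 0$ and a local minimum at $x = 2/3$ with $g(2/3) = c - 4/27$. The sign of this local minimum value is precisely what produces the threshold: if $c > 4/27$ the local minimum is positive and $g$ has a single (negative) root; if $c = 4/27$ the local minimum vanishes and $g$ has a double root at $x = 2/3$ together with one negative root; and if $0 < c < 4/27$ the local minimum is negative, forcing one root in $(-\infty, 0)$, one in $(0, 2/3)$, and one in $(2/3, +\infty)$. This reproduces the three-case structure and the left-to-right labeling $x_1 < x_2 < x_3$.

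The crux of the argument is the stability computation, which simplifies dramatically. Differentiating gives $F_c'(x) = 2x - c/x^2$, and at a fixed point the cubic relation yields $c = x^2 - x^3$, hence $c/x^2 = 1 - x$. Substituting collapses the derivative to the clean linear form
\begin{equation*}
F_c'(x) = 2x - (1 - x) = 3x - 1,
\end{equation*}
valid at \emph{every} fixed point. Consequently $|F_c'(x)| < 1$ precisely when $0 < x < 2/3$ (attracting), $|F_c'(x)| > 1$ when $x < 0$ or $x > 2/3$ (repelling), and $F_c'(x) = 1$ exactly at $x = 2/3$ (neutral). Reading this against the root locations from the second step settles all three cases at once: for $c > 4/27$ the unique negative root is repelling; for $c = 4/27$ the negative root $x_1$ is repelling and the double root $x_2 = 2/3$ is neutral; and for $0 < c < 4/27$ the roots $x_1 < 0$ and $x_3 > 2/3$ are repelling while the middle root $x_2 \in (0, 2/3)$ is attracting.

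I expect the main obstacle to be bookkeeping rather than conceptual: one must verify carefully that the roots fall in the claimed intervals, in particular confirming that at $c = 4/27$ the coincident root at $2/3$ is genuinely a double root and that the labeling $x_1, x_2, x_3$ remains consistent as $c$ crosses the threshold. Once the identity $F_c'(x) = 3x - 1$ on the fixed-point locus is in hand, the stability classification is immediate, and it is worth noting that the threshold $c = 4/27$ emerges simultaneously from the root count (the vanishing of the discriminant-governing quantity $g(2/3)$) and from the neutrality condition $x = 2/3$, which explains why the saddle-node bifurcation and the change in stability type occur at the same parameter value.
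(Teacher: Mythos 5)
Your proof is correct, and its crux is exactly the paper's: the identity $F_c'(x_0) = 3x_0 - 1$ at every fixed point, followed by reading off stability from whether the root lies in $(0,2/3)$, equals $2/3$, or lies outside $[0,2/3]$. (The paper derives the identity by writing $F_c'(x) = 3x - \tfrac{1}{x}F_c(x)$ and using $F_c(x_0)=x_0$; you substitute $c = x_0^2 - x_0^3$ into $F_c'(x)=2x - c/x^2$ --- the same computation in different clothing.) Where you genuinely differ is in counting and locating the fixed points. You give a self-contained sign analysis of the cubic $g(x) = x^3 - x^2 + c$: from $g'(x) = x(3x-2)$, $g(0) = c > 0$, and $g(2/3) = c - 4/27$, you rigorously pin one root in $(-\infty,0)$, and (when $0 < c < 4/27$) one in $(0,2/3)$ and one in $(2/3,\infty)$, with the double root at $2/3$ exactly at $c = 4/27$. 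The paper instead imports the count and rough location of the fixed points from its preceding graphical discussion of the saddle-node bifurcation (Fig.~5), and in case (3) it asserts without proof a monotonicity statement (``when $c$ increases, $x_2$ increases and $x_3$ decreases, and they coincide at $c = 4/27$'') to conclude $0 < x_2 < 2/3 < x_3$. Your route closes that gap and makes the proof independent of the figures; the paper's route buys brevity by leaning on the bifurcation picture already developed in the surrounding text. Your closing observation --- that the threshold $4/27$ arises simultaneously from $g(2/3)=0$ and from the neutrality condition $3x-1=1$ at $x=2/3$ --- is a nice conceptual bonus that the paper leaves implicit.
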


\begin{proof}
For a fixed point $x_{0}$ of $F_{c}:\mathbb{R}\rightarrow \mathbb{R}$, where $F_{c}(x) = x^{2} + \dfrac{c}{x}$, we have
\begin{equation*}
F_{c}^{'}(x) = 2x - \dfrac{c}{x^{2}} = 3x - \dfrac{1}{x} ( x^{2} + \dfrac{c}{x}) = 3x - \dfrac{1}{x} F_{c}(x) \implies F_{c}^{'}(x_{0}) = 3x_{0} - \dfrac{1}{x_{0}} F_{c}(x_{0}) = 3x_{0} - 1.
\end{equation*}

Therefore, if $0<x_{0}<2/3$, $x_{0}$ is attracting; if $x_{0}<0$ or $2/3<x_{0}$,  $x_{0}$ is repelling; otherwise,  $x_{0}$ is neutral.\\
(1) When $c > 4/27$, there exists only one fixed point $x_{1}$, then $\forall c \in \mathbb{R}$, $x_{1} < 0$  $\implies$ $x_{1}$ is always repelling $ ( $ the same reason for the repelling fixed points in (2) and (3)$ ) $;\\
(2) When $c = 4/27$, there exist two fixed points $x_{1}$ and $x_{2}$, and $x_{2} = 2/3$ $\implies$ $x_{2}$ is neutral;\\
(3) When $c < 4/27$, there exist three fixed points $x_{1}$, $x_{2}$ and $x_{3}$.  When c increases, $x_{2}$ increases and $x_{3}$ decreases, and they coincide at $c = 4/27$. Theretofore, when $c < 4/27$, we have $0 < x_{2} < 2/3$ and  $2/3 < x_{3}$ $\implies$ $x_{2}$ is attracting and $x_{3}$ is repelling.
\end{proof}

\item Now we consider the dynamics when $c<0$. From Fig.4, we see that several period-doubling route to chaos with different primary periods appear on the left side in the graph. Fig.7 shows the period-doubling route with period three and four on the orbit diagram. To interpret the dynamics when $c<0$, we firstly analyze the dynamics of the period-doubling bifurcation with period three. In the right graph in Fig.8, the curves with three different colors represent $F_{c}(x) = x^{2} + \dfrac{c}{x}$, $F_{c}^{2}(x)$, and $F_{c}^{3}(x)$, respectively. The left graph in Fig.8 shows the point at which period-doubling route occurs ($c=-0.327$). From left to right, three blue curves intersect (are tangent to) the reference line $y=x$ simultaneously, which means that $F_{c}^{3}(x) = x$. When c-value becomes smaller than -0.327, one can see three more periodic points appear (right graph in Fig.8). The rightmost point at which the four curves (including the green reference line $y=x$) with different colors intersect is the fixed point of $F_{c}(x)$. It worths to mention that in Fig.8 there are three intervals between the intersections between the $F_{c}^{3}(x)$ curves and $y=x$ line , and the lengths of these three intervals correspond to the vertical heights of three pieces in the left graph in Fig.7.  

\begin{figure}[h]
\centering
\centerline{
\includegraphics[height=5cm, width=9cm]{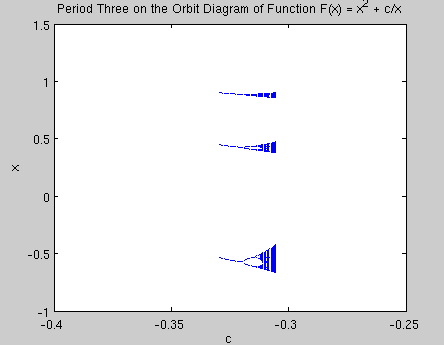}
\includegraphics[height=5cm, width=9cm]{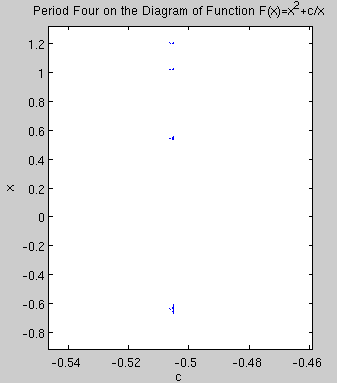}}
\caption{\small Enlargements from Fig.4 - Period-Doubling Routes with Period Three and Four on the Orbit Diagram.}
\end{figure}
 
\begin{figure}[h]
\centering
\centerline{
\includegraphics[height=6cm, width=6.5cm]{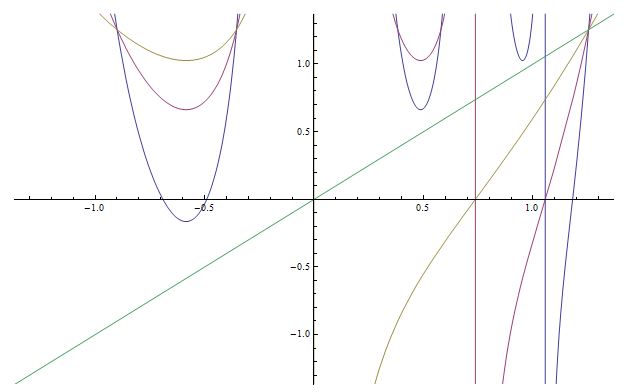}
\includegraphics[height=6cm, width=6.5cm]{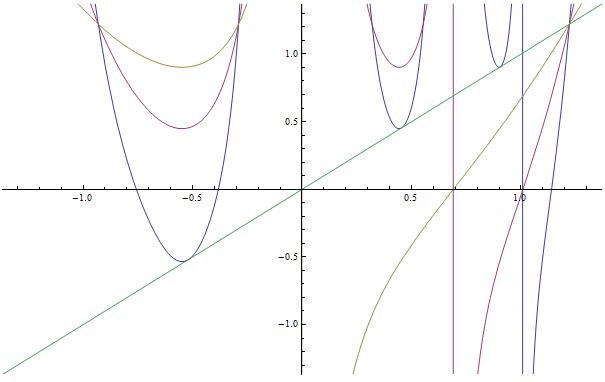}
\includegraphics[height=6cm, width=6.5cm]{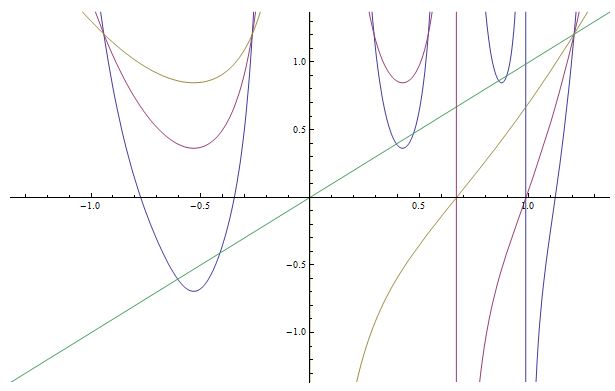}}
\caption{\small The Graphs of $F_{c}(x) = x^{2} + \dfrac{c}{x}$, $F_{c}^{2}(x)$, and $F_{c}^{3}(x)$ when $c<0$. Left: before the saddle-node bifurcation, $c>4/27$; middle: at the (period-three) saddle-node bifurcation, $c=4/27$; right: after the saddle-node bifurcation, $c<4/27$. (Yellow curves represent $F_{c}(x)$, purple curves represent $(F_{c}^{2}(x))$, blue curves represent $(F_{c}^{3}(x))$, and green straight line represents the reference line $y=x$.)}
\end{figure}

\item Now we suggest two conjectures based on the several typical iteration graphs when $c<0$. According to the orbit diagram in Fig.4 and four iteration graphs in Fig.9, we find that for those period-$n$ cycles appear when $c<0$, the integer $n$ keeps increasing when c-value approaches the \textbf {homoclinic} case, in which the critical value (the lowest point on the left branch of $F_{c}(x) = x^{2} + \dfrac{c}{x}$ curves) and the fixed point (the intersection of the reference line $y=x$ and the left branch of $F_{c}(x) = x^{2} + \dfrac{c}{x}$ curves) have the same height. When c-value is less than -0.593, all orbits that start from the critical point will go off to infinity. Therefore, we have the following two conjectures: 

\begin{conjecture}
\item Infinitely many period-$n$ cycles with different $n$-values appear when $c$-value varies from -0.3237 (period-three) to -0.593 (homoclinic).  
\end{conjecture}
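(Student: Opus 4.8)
\emph{The plan is to} recognize that for $c<0$ the family $F_c(x)=x^2+c/x$ restricts, on a suitable forward-invariant interval, to an $S$-unimodal map, and then to run the Milnor--Thurston kneading calculus together with \v{S}arkovskii's theorem to detect cycles of infinitely many distinct periods. \emph{First I would} locate the dynamically relevant interval. Since $F_c'(x)=2x-c/x^2$ vanishes only at the critical point $c_0=(c/2)^{1/3}<0$, the left branch $\{x<0\}$ carries a single turning point, a minimum, whose value is the critical value $v_c=F_c(c_0)$; on this branch $F_c$ decreases from $+\infty$ (near the pole) to $v_c$ and rises again to $+\infty$. The repelling fixed point $x_1<0$ satisfies $F_c'(x_1)=3x_1-1<-1$ (by the computation in the proof of Theorem~3.2), so it is orientation-reversing. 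I would take $I_c=[x_1,\hat x_1]$, where $\hat x_1\in(c_0,0)$ is the second preimage of $x_1$; this contains $c_0$ and is forward invariant precisely when $v_c\ge x_1$, that is for $c\ge c_h\approx-0.593$, while for $c<c_h$ one has $v_c<x_1$ and the critical orbit escapes to $\infty$, exactly as the orbit diagram shows.

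A short computation confirms the $S$-unimodal hypothesis on this branch: writing the Schwarzian numerator as $N(x)=F_c'''F_c'-\tfrac32(F_c'')^2$, one finds $N(x)=-6\,(4c\,x^{-3}+1)$, which is strictly negative for $c<0$, $x<0$ because $cx^{-3}>0$ there. Hence $S(F_c)<0$ on $I_c$, and by Singer's theorem $F_c|_{I_c}$ has at most one attracting cycle, which necessarily attracts the critical orbit. \emph{This is precisely what makes the periodic windows on the orbit diagram (Fig.~4) well defined.}

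\emph{Next I would} pin the two ends of the parameter interval to two kneading sequences. At $c=c_3\approx-0.3237$ the critical orbit is a superattracting $3$-cycle, yielding the period-three kneading word; at the homoclinic parameter $c_h\approx-0.593$ the critical value $v_c$ lands exactly on $x_1$, so the critical orbit is eventually fixed and the kneading sequence is the maximal admissible one, the signature of full entropy $\log 2$. The structural claim is that as $c$ decreases from $c_3$ to $c_h$ the critical value $v_c$ sweeps monotonically through $I_c$, so that $c\mapsto K(F_c)$ is monotone and the family is \emph{full}: every kneading sequence lying between the two endpoint words in the unimodal order is realized by some $c\in[c_h,c_3]$. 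With fullness in hand the conclusion is immediate. For infinitely many integers $n$ there is a periodic kneading word of period $n$ strictly between the period-three word and the maximal word, and by the intermediate value principle for a monotone full family each such word is realized at a parameter $c_n\in(c_h,c_3)$ where the critical orbit is a superattracting $n$-cycle; these are exactly the windows on the orbit diagram. (A weaker \emph{existence} statement follows at once from \v{S}arkovskii's theorem, since the period-three orbit present at $c_3$ already forces periodic points of every period; but that argument produces possibly repelling cycles at a single parameter rather than the attracting windows the statement describes.)

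\emph{The hard part will be} establishing the monotonicity and fullness of $c\mapsto K(F_c)$ rigorously. For the polynomial family $z^2+c$ fullness is supplied by the complex-analytic rigidity of Milnor--Thurston and Douady--Hubbard, but $F_c$ is a rational map of degree three with a pole at $0$, so those arguments do not transfer verbatim; in particular one must rule out, uniformly in $c\in[c_h,c_3]$, any leakage of the critical orbit through the pole to $\infty$ that would destroy the invariance of $I_c$. I would attack this either by verifying monotonicity directly from the sign of $\partial_c v_c$ evaluated along the critical orbit, or by passing to the holomorphic extension $F_{\lambda,1}(z)=z^2+\lambda/z$ of Section~3 and exploiting its Julia-set structure to import the needed rigidity. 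Securing this full-family property is the crux on which the whole argument rests.
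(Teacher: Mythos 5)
First, a point of order: the paper does not prove this statement at all. It is offered explicitly as a conjecture, supported only by the orbit diagram (Fig.~4) and the iteration graphs (Fig.~9), so there is no proof of record to compare yours against; your proposal has to stand on its own. It does not, because the geometry of $F_c$ for $c<0$ is not what you assume at the very first step.

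The fatal gap is the construction of the invariant interval $I_c=[x_1,\hat x_1]$. For $c<0$ the fixed-point equation $x^3-x^2+c=0$ has no negative root: for $x<0$ all three terms are strictly negative. The repelling fixed point $x_1<0$ that you import from the proof of Theorem~3.2 exists only for $c>0$; for $c<0$ the unique real fixed point is positive, $x^\ast>1$, with $F_c'(x^\ast)=3x^\ast-1>2$ (and the ``homoclinic'' parameter $c\approx-0.593$ is where the critical value lands on this positive fixed point). Worse, no interval of negative numbers can be forward invariant under $F_c$: the minimum of the left branch is the critical value $v_c=F_c\bigl((c/2)^{1/3}\bigr)=3(c/2)^{2/3}>0$, so $F_c$ maps the entire left branch into the positive half-line and $F_c(I_c)\cap I_c=\emptyset$. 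Orbits for $c<0$ necessarily shuttle between the two sides of the pole --- the superattracting $3$-cycle at $c\approx-0.327$ is $\{-0.549\ldots,\,0.897\ldots,\,0.440\ldots\}$, one negative and two positive points --- so any unimodal reduction must come from a return map or higher iterate, not from $F_c$ restricted to a negative interval. This collapse propagates through both tools you rely on. Your Schwarzian computation $N(x)=-6(4cx^{-3}+1)$ is correct, but negativity holds only for $x<0$; on the positive axis $S(F_c)>0$ wherever $0<x<(-4c)^{1/3}$ (for $c=-0.327$ that is $0<x<1.09\ldots$, which contains both positive points of the $3$-cycle), so Singer's theorem cannot be invoked along the critical orbit. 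Likewise your fallback via \v{S}arkovskii fails: that theorem needs a map continuous on an interval containing the orbit, and the $3$-cycle straddles the pole at $0$, where $F_c$ is undefined. Finally, the pole is not a technicality to be deferred to the ``hard part'': the trap door around $0$ through which nearby orbits escape is exactly what produces the escaping parameter gaps of the paper's Conjecture~2.2, and it destroys the hypothesis of every classical kneading/fullness theorem you cite. A workable attack would more plausibly track, by an intermediate value argument in $c$, the number of iterates the critical orbit spends beneath the positive repelling fixed point as $v_c\nearrow x^\ast$ (a period-adding mechanism near the eventually-fixed parameter), producing a superstable parameter for each sufficiently large period, rather than Milnor--Thurston theory for $S$-unimodal interval maps.
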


\begin{conjecture}
\item Between each pair of the successive period-$n$ cycles, there exist c-value(s) under which the orbit of the critical point approaches to infinity. 
\end{conjecture}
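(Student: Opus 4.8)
The plan is to combine the openness of the escape locus with a combinatorial (kneading-type) intermediate value argument, using the pole at $x=0$ as the mechanism that forces escape. First I would fix notation: write $x_c = (c/2)^{1/3}$ for the unique critical point when $c<0$, and $g_j(c) = F_c^{j}(x_c)$ for the $j$-th iterate of the critical value, a function of $c$ that is continuous on any parameter interval where no earlier iterate has hit the singular point $0$. Call $c$ an \emph{escape parameter} if the critical orbit tends to $+\infty$. Since for $x$ beyond the largest fixed point one has $F_c(x)>x$ with $F_c^n(x)\to+\infty$, and since a neighbourhood of the pole (the trap door) is mapped into this escaping region, escape is always realized by some iterate $g_j(c)$ landing in an open set; hence the escape locus $E$ is open. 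Likewise an attracting period-$n$ cycle is hyperbolic on the interior of its window and captures the critical orbit (every attracting cycle attracts a critical point), so each period-$n$ window $W_n$ is open as well. I take the existence of the successive windows as given by the discussion leading to Conjecture 3.1.

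Next I would set up a monotone combinatorial framework. Because $F_c$ has a single turning point at $x_c$ on the bounded part of the line, I would assign to each non-escaping parameter the kneading sequence of the critical orbit, recording on which side of $x_c$ (and of the pole $0$) each iterate $g_j(c)$ falls. The goal of this step is a monotonicity statement: as $c$ decreases from $-0.3237$ toward $-0.593$, the kneading data varies monotonically in the usual unimodal order. Granting this, each period-$n$ window is exactly a maximal parameter interval on which the critical orbit realizes one fixed admissible (eventually periodic) kneading pattern, and two successive windows $W_n$, $W_{n'}$ carry distinct patterns.

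The core of the argument is then an intermediate value step anchored at the pole. In the window $W_n$ the critical orbit is captured by the period-$n$ cycle, and for $c$ near the gap there is an iterate $g_j(c)$ whose nearest approach to the pole lands on one side of $0$; in $W_{n'}$ the analogous iterate lands on the \emph{opposite} side of $0$, since the two windows carry different admissible itineraries. Sweeping $c$ across the gap $(\alpha,\beta)$ strictly between the two windows and using continuity of $g_j$ together with the kneading monotonicity, there must be a parameter $c_0\in(\alpha,\beta)$ with $g_j(c_0)=0$: the critical orbit lands exactly on the singular point, so $g_{j+1}(c_0)=\pm\infty$ and $c_0\in E$. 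Because $E$ is open, an entire subinterval of $(\alpha,\beta)$ around $c_0$ consists of escape parameters, which is precisely the asserted escape c-value(s) between each pair of successive period-$n$ cycles.

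The hard part will be the monotonicity of the kneading data and, with it, the claim that the combinatorial transition between adjacent windows is necessarily routed through a pole-hit. Standard unimodal kneading theory assumes a map of an interval into itself, whereas here the dynamics is genuinely non-compact: the pole injects an escaping region, $F_c$ is only piecewise defined near $0$, and the negative-Schwarzian machinery used for $F_a(x)=1-ax^2$ must be re-established for $x^2+c/x$ on the invariant part of the line. Concretely I would need (i) a $c$-dependent interval that is forward-invariant away from escape and on which $F_c$ is genuinely unimodal with $S(F_c)<0$, (ii) monotone dependence of the critical value $g_1(c)$ and its iterates on $c$ so that the kneading order is respected, and (iii) verification that the gap cannot be filled by bounded aperiodic behaviour carrying no escape. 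Establishing (iii) rigorously, i.e. excluding a positive-measure set of bounded chaotic parameters of the kind that actually occurs for $1-ax^2$, is where the real work lies, and is presumably why the statement is posed as a conjecture rather than a theorem.
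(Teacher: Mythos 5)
You should first be aware that the paper contains no proof of this statement: it is posed as a conjecture, supported only by the orbit diagram (Fig.~4) and the iteration graphs (Fig.~9), so there is no argument of record to compare yours against, and your attempt must stand on its own. Its two outer steps do stand: the openness/trap-door mechanism is correct, and pole-hits $g_j(c_0)=0$ are indeed the only possible source of escape parameters in a gap, because for $-0.593<c<0$ the critical value $3|c/2|^{2/3}$ stays below the repelling fixed point $q$, and an orbit can only get beyond $q$ by first passing near the pole. The genuine failure is in the middle of your program: your step (i) is not merely unestablished, it is false. A direct computation gives
\begin{equation*}
SF_c(x)=\frac{-6\left(x^{3}+4c\right)}{x^{3}\left(F_c'(x)\right)^{2}},
\end{equation*}
so for $c<0$ the Schwarzian derivative is strictly positive on the whole interval $0<x<(-4c)^{1/3}$; at $c\approx-0.327$ that interval is roughly $(0,1.09)$ and contains the points $0.440$ and $0.897$ of the very period-three cycle computed in Section 3.6. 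Thus Singer-type control of attractors and unimodal kneading monotonicity --- the latter already a deep complex-analytic theorem even for $1-ax^{2}$ --- are unavailable exactly where you need them, and your steps (ii) and (iii), which you concede are open, carry the entire weight of the conjecture. As written, the proposal is a research program, not a proof.

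The missing idea that repairs your intermediate-value step, and makes all of the kneading theory unnecessary, is to take itineraries with respect to the pole rather than the turning point. If no iterate $g_j$ vanishes anywhere on a closed parameter interval $K$, then by induction on $j$ every $g_j$ is finite and continuous on $K$, hence of constant sign there: in this family a sign change of an iterate \emph{is} a pole-hit, whereas in the logistic family the partition point is the critical point, which orbits may cross freely --- this is precisely why gaps there can be filled by bounded chaos but here cannot be crossed silently. Now take $c_1\approx-0.327$ in the period-three window and $c_2\approx-0.507$ in the period-four window; the critical orbits are attracted to cycles with sign patterns $(-,+,+)$ and $(-,+,+,+)$, so for all large $j$ the indices with $\mathrm{sign}\left(g_j\right)=-$ form a set with gaps of length $3$ at $c_1$ but of length $4$ at $c_2$. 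These eventual patterns cannot coincide, so some $g_j$ has opposite signs at $c_1$ and $c_2$, and hence the iterates cannot all be nonvanishing on $[c_2,c_1]$. Taking $j_0$ minimal such that $g_{j_0}$ vanishes somewhere on $[c_2,c_1]$, the function $g_{j_0}$ is continuous there and $g_{j_0}(c_0)=0$ for some $c_0$; this $c_0$ cannot lie in either window (there the critical orbit converges to a finite cycle and never meets the pole), so it lies in the gap, and by your own trap-door observation every parameter sufficiently near $c_0$ is an escape parameter. This yields an open set of escape $c$-values strictly between the two windows, proving the conjecture modulo exactly the data it presupposes --- the existence of the two windows and the computed sign signatures of their cycles --- and the same argument works for any pair of windows whose primary periods differ.
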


\begin{remark}
\item As we mentioned in the Introduction section, a homoclinic orbit has common forward and backward asymptotic behaviors to the same point. See \cite{Robinson} and \cite{Devaney VI} for more discussion. 
\end{remark}

\begin{figure}[h]
\centering
\centerline{
\includegraphics[height=6cm, width=8cm]{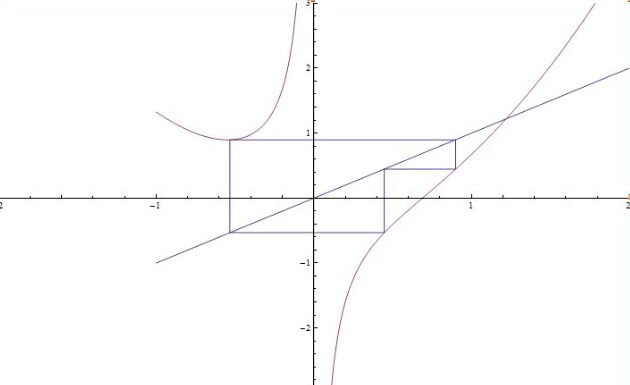}
\includegraphics[height=6cm, width=8cm]{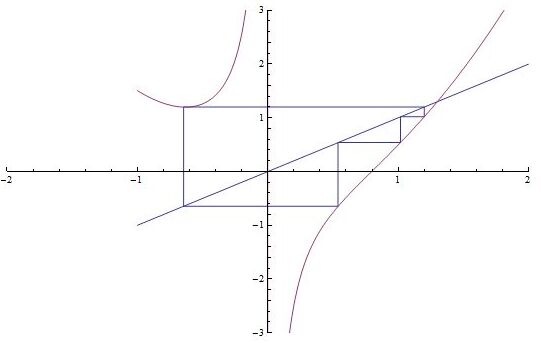}}
\centerline{
\includegraphics[height=6cm, width=8cm]{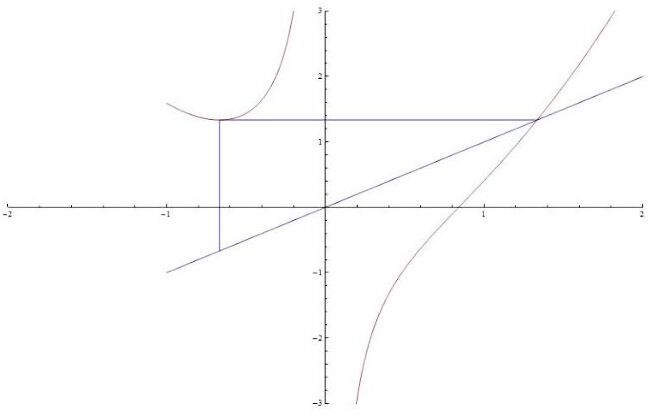}
\includegraphics[height=6cm, width=8cm]{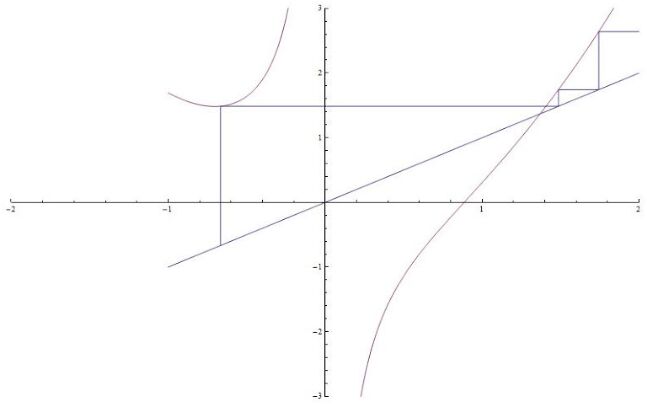}}
\caption{\small Iteration Graphs of Cycles with Different Periods for $F_{c}(x) = x^{2} + \dfrac{1}{x}$. (Upper left: period-three when $c = -0.327$; upper right: period-four when $c = -0.507$; lower left: homoclinic (pre-fixed) when $c = -0.593$; lower right: escape when $c < -0.593$.)}
\end{figure}

\clearpage
\subsection{Singular Perturbations of Complex Quadratic Family when $m=1$}
\item Now we consider the dynamics of $F_{c}: \mathbb{C}\rightarrow\mathbb{C}$, where $F_{c}(z) = z^{2} + \dfrac{c}{z}$, in which $c > 0$ and $c \in \mathbb{R}$. When $c < 4/27$ and $c = 4/27$, the fixed points of this family are the same as the family $F_{a}(z) = z^{2} + a $ referred in \textbf {section 3.1}; however, when $c > 4/27$, the fixed points are different due to the appearance of a pair of complex conjugate fixed points. For convenience, we again denote the fixed points as $z_{1}$, $z_{2}$ and $z_{3}$. The dynamics of this family at the fixed points is stated as follows:

\begin{theorem} 
For the family $F_{c}: \mathbb{C}\rightarrow\mathbb{C}$, where $F_{c}(z) = z^{2} + \dfrac{c}{z}$, in which $c > 0$ and $c \in \mathbb{R}$, we have, we have\\
(1) When $c > 4/27$, there exists one real and two complex repelling fixed point $z_{1}$, $z_{2}$ and $z_{3}$;\\
(2) When $c = 4/27$, there exist two real fixed points, $z_{1}$ is repelling while $z_{2}$ is neutral;\\
(3) When $c < 4/27$, there exist three real fixed points, $z_{1}$ and $z_{3}$ are repelling while $z_{2}$ is attracting.
\qed
\end{theorem}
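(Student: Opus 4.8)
The plan is to reduce everything to a single cubic together with the observation, already exploited in the proof of Theorem 3.2, that the multiplier at a fixed point takes a strikingly simple form. First I would multiply the fixed-point equation $z^{2}+c/z=z$ by $z$ (legitimate since $z\neq 0$) to obtain $z^{3}-z^{2}+c=0$, whose three roots are exactly the fixed points $z_{1},z_{2},z_{3}$. Next I would record the purely algebraic identity $F_{c}'(z)=2z-c/z^{2}=3z-F_{c}(z)/z$, which at a fixed point collapses to $F_{c}'(z_{0})=3z_{0}-1$. Since this computation is algebraic, it remains valid for complex $z_{0}$; hence a fixed point is attracting, neutral, or repelling according as $|3z_{0}-1|$ is less than, equal to, or greater than $1$, i.e.\ according as $z_{0}$ lies inside, on, or outside the open disk $|z-1/3|<1/3$.

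To count how many roots are real I would study the real cubic $g(z)=z^{3}-z^{2}$, whose graph has a local maximum $g(0)=0$ and a local minimum $g(2/3)=-4/27$, and ask how often the horizontal line $y=-c$ meets it. For $0<c<4/27$ the line lies strictly between the two critical values and crosses three times, yielding three real roots with $z_{1}<0<z_{2}<2/3<z_{3}$; for $c=4/27$ it is tangent at the minimum, producing a double root $z_{2}=2/3$ and a simple negative root $z_{1}$; and for $c>4/27$ it lies below the minimum and meets the graph only once (at some $z_{1}<0$), so the remaining two roots form a complex-conjugate pair. This settles the real/complex bookkeeping in all three cases, in agreement with the parallel real analysis.

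The real roots are then immediate, since $3z_{0}-1\in(-1,1)$ exactly when $z_{0}\in(0,2/3)$: in case (3) the middle root $z_{2}\in(0,2/3)$ is attracting while $z_{1}<0$ and $z_{3}>2/3$ are repelling; in case (2) the double root $z_{2}=2/3$ gives multiplier $3(2/3)-1=1$ and is neutral, while $z_{1}=-1/3$ has multiplier $-2$ and is repelling; and in case (1) the sole real root $z_{1}<0$ is repelling.

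The one genuinely new point---and the step I expect to be the main obstacle---is showing that the complex-conjugate pair in case (1) is repelling, where I can no longer read off the sign of $3z_{0}-1$ on the real line. Here I would invoke Vieta's formulas for $z^{3}-z^{2}+c=0$, namely $z_{1}+z_{2}+z_{3}=1$, $z_{1}z_{2}+z_{1}z_{3}+z_{2}z_{3}=0$, and $z_{1}z_{2}z_{3}=-c$. Writing the pair as $p\pm qi$ with $q\neq 0$, the first two relations give $z_{1}=1-2p$ and $q^{2}=3p^{2}-2p$, while the product relation reduces to $2p(2p-1)^{2}=c>0$, forcing $p>0$; together with the positivity $q^{2}=3p^{2}-2p>0$ this pins down $p>2/3$. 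A direct computation of the squared multiplier then gives $|3(p+qi)-1|^{2}=(3p-1)^{2}+9q^{2}=36p^{2}-24p+1$, which exceeds $1$ precisely when $p(3p-2)>0$; this holds since $p>2/3$, so both complex fixed points are repelling and case (1) is complete.
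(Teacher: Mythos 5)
Your proposal is correct, and its skeleton matches the paper's: both reduce to the cubic $z^{3}-z^{2}+c=0$, both use the identity $F_{c}'(z_{0})=3z_{0}-1$ at fixed points, and both invoke Vieta to handle the complex pair. The difference lies in how the crucial case $c>4/27$ is closed. The paper only uses the trace relation $z_{1}+z_{2}+z_{3}=1$ together with a dynamical/monotonicity assertion --- that the negative root $z_{1}$ persists and decreases as $c$ increases past $4/27$, so that $z_{2}+z_{3}$ grows beyond its saddle-node value and hence $\mathrm{Re}(z_{2}),\mathrm{Re}(z_{3})>2/3$ --- and that assertion is stated without proof (the paper also contains an arithmetic slip there, writing $z_{2}+z_{3}=2/3$ at $c=4/27$ when the trace relation with $z_{1}=-1/3$ gives $4/3$). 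You instead use all three Vieta relations to parametrize the complex pair exactly: writing the pair as $p\pm qi$ you get $z_{1}=1-2p$, $q^{2}=3p^{2}-2p$, and $c=2p(2p-1)^{2}$, from which $p>2/3$ follows from nothing more than $c>0$ and $q\neq 0$, and then $\left|3(p+qi)-1\right|^{2}=36p^{2}-24p+1>1$ is a one-line computation. Your route is therefore fully self-contained and algebraically airtight where the paper leans on an unproven continuity-in-$c$ picture; what the paper's (repairable) argument buys in exchange is brevity and a closer tie to the saddle-node bifurcation narrative of Section 3.2, since it tracks how the fixed points move as $c$ crosses $4/27$. Your explicit root-counting via the graph of $z^{3}-z^{2}$ also replaces the paper's citation of Theorem 3.2 for cases (2) and (3), which is a harmless and arguably cleaner substitution.
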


\begin{proof}
When $c < 4/27$ and $c = 4/27$, the fixed points of $F_{c}:\mathbb{R}\rightarrow \mathbb{R}$, where $F_{c}(x) = x^{2} + \dfrac{c}{x}$, reduces to the case in Theorem 3.2. Therefore, we only need to prove its .fixed points when $c > 4/27$: \\
Assume $z_{0}$ is the fixed point of $F_{c}:\mathbb{R}\rightarrow \mathbb{R}$, where $F_{c}(z) = z^{2} + \dfrac{c}{z}$, then 
\begin{equation*}
z^{2}+\dfrac{c}{z}=z \implies z^{3}-z^{2}+c=0.   
\end{equation*}
According to Vieta's formula, one can easily show $z_{1}+z_{2}+z_{3} = 1$. Since the point $z_{1}$ remains negative for all $c > 0$ and $c \in \mathbb{R}$ and decreases as $c$ increases for $c > 4/27$, we know that $z_{2}+z_{3}$ increases as $c$ increases for $c > 4/27$. Since when $c = 4/27$, where the bifurcation happens, we have $z_{1}=-1/3$, then $z_{2}+z_{3}=2/3$ in this case. Therefore, $z_{2}+z_{3}>4/3$, which implies that $Re(z_{2})>2/3$ and $Re(z_{3})>2/3$ when $c > 4/27$. Meanwhile, similar to the real case in \textbf {section 3.1}, one can show that, for the fixed point $z_{0}$, we have 
\begin{equation*}
F_{c}^{'}(z_{0}) = 3z_{0}-1,
\end{equation*}
and then
\begin{equation*}
|F_{c}^{'}(z_{0})| = |3z_{0}-1| > 1,
\end{equation*}
when $z_{0}=z_{2}$ or $z_{0}=z_{3}$. Hence, all the three fixed points $z_{1}$, $z_{2}$ and $z_{3}$ are repelling. 
\end{proof}

\begin{figure}[h]
\centering
\centerline{
\includegraphics[height=4.5cm, width=4.5cm]{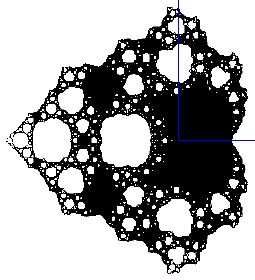}
\includegraphics[height=5cm, width=5cm]{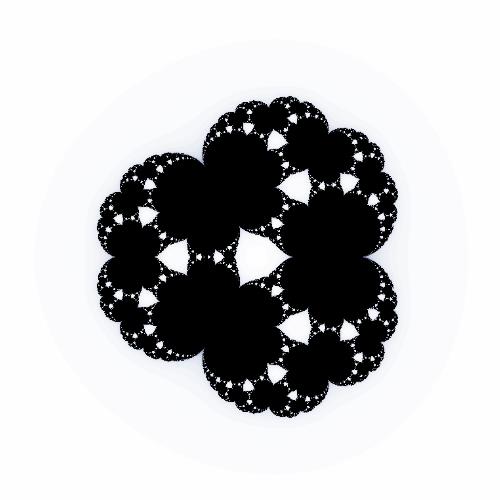}
\includegraphics[height=5cm, width=5cm]{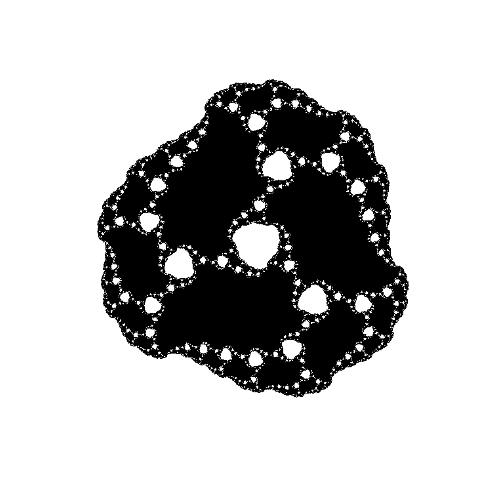}
\includegraphics[height=5cm, width=5cm]{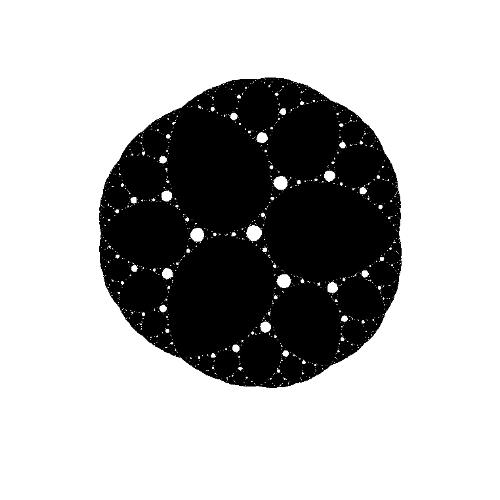}}
\caption{\small From Left to Right: The Escape Figure of (1)the Parameter Plane of the Family $F_{\lambda}(z) = z^{2} + \dfrac{\lambda}{z}$, where $\lambda \in \mathbb{C}$; (2)the Dynamical Plane of $z^{2} + \dfrac{4/27}{z}$; (3)the Dynamical Plane of $z^{2} + \dfrac{0.1+0.1i}{z}$; (4)the Dynamical Plane of $z^{2} + \dfrac{0.01}{z}$.}
\end{figure}

\item Now we discuss the dynamics of a more general case: $F_{\lambda}: \mathbb{C}\rightarrow\mathbb{C}$, where $F_{\lambda}(z) = z^{2} + \dfrac{\lambda}{z}$, in which $\lambda \in \mathbb{C}$. Firstly, we consider its symmetric structure. Let 
\begin{equation*}
\omega = cos(\dfrac{2\pi}{3}) + i\, sin(\dfrac{2\pi}{3}),
\end{equation*}
then
\begin{equation*}
F_{\lambda}(\omega z) = (\omega z)^{2} + \dfrac{\lambda}{\omega z} = {\omega}^{2} (z^{2} + \dfrac{\lambda}{{\omega }^{3} z}) = {\omega}^{2} (z^{2} + \dfrac{\lambda}{z}) = {\omega}^{2}F_{\lambda}(z),
\end{equation*}
and therefore $F_{\lambda}(\omega z)$ and $F_{\lambda}(z)$ are symmetric to each other. Similarly, one can easily show that 
\begin{equation*}
F_{\lambda}({\omega}^{2} z) = ({\omega}^{2} z)^{2} + \dfrac{\lambda}{{\omega}^{2} z} = {\omega} ({\omega}^{3} z^{2} + \dfrac{\lambda}{{\omega }^{3} z}) = {\omega} (z^{2} + \dfrac{\lambda}{z}) = {\omega} F_{\lambda}(z),
\end{equation*}
which implies that $F_{\lambda}({\omega}^{2} z)$ and $F_{\lambda}(z)$ are symmetric as well. Therefore, $F_{\lambda}(z)$, $F_{\lambda}(\omega z)$ and $F_{\lambda}({\omega}^{2} z)$ are symmetric and obey the same dynamics (i.e. either all approach to infinity or all stay bounded).  

\item For this family, there is only one ''dividing ray", that is, the negative real axis $ \text{Arg}\left(\lambda \right) = \pi $. Then, one can show the following convergence theorem in the ``Hausdorff topology" sense \cite{Morabito}:
\begin{theorem}
\emph{(R. Devaney, and M. Morabito \cite{Morabito}, 2004)}
For $F_{\lambda}(z) = z^{2} + \dfrac{\lambda}{z}$, in which $\lambda \in \mathbb{C}$, its Julia set $J(F_{\lambda})$ converges as a set to the closed unit disk as $\lambda$ approaches to 0 along the dividing ray $\text{Arg}\left( \lambda \right) = \pi $ in its parameter plane.
\qed
\end{theorem}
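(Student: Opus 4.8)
The plan is to prove the Hausdorff convergence by establishing the two one-sided inclusions separately: that every Hausdorff limit of $J(F_\lambda)$ as $\lambda\to 0$ along $\mathrm{Arg}(\lambda)=\pi$ is contained in the closed unit disk $\overline{\mathbb{D}}=\{|z|\le 1\}$, and conversely that every point of $\overline{\mathbb{D}}$ is a limit of points of $J(F_\lambda)$. It is worth emphasizing at the outset that this is a genuine singular-perturbation (discontinuity) phenomenon: at $\lambda=0$ the map degenerates to $z\mapsto z^2$, whose Julia set is merely the unit circle $\{|z|=1\}$ while its filled Julia set is $\overline{\mathbb{D}}$; the theorem asserts that an arbitrarily small pole forces the Julia set to swell up and fill the entire disk in the limit.

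First I would dispatch the outer inclusion, which is elementary. For $|z|=R>1$ one has $|F_\lambda(z)|\ge R^2-|\lambda|/R$, and $R^2-|\lambda|/R>R$ as soon as $R^2(R-1)>|\lambda|$; hence there is a radius $1+\delta(\lambda)$, with $\delta(\lambda)\to 0$ as $\lambda\to 0$, beyond which the modulus strictly increases along orbits and therefore tends to $\infty$. Every such point lies in $B_\lambda(F_\lambda)$, so $J(F_\lambda)\subset\{|z|\le 1+\delta(\lambda)\}$ and any Hausdorff limit is contained in $\overline{\mathbb{D}}$.

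The substance of the theorem is the reverse inclusion --- showing $J(F_\lambda)$ becomes dense in all of $\overline{\mathbb{D}}$ --- and this is where I expect the main difficulty. The tools I would assemble are: (i) a location and size estimate for the trap door, solving $\lambda/z\approx w$ to show $T_\lambda(F_\lambda)$ is a neighborhood of the pole of diameter $\sim|\lambda|\to 0$, with $\partial T_\lambda(F_\lambda)\subset J(F_\lambda)$; (ii) the critical data $z^3=\lambda/2$, giving critical points of modulus $(|\lambda|/2)^{1/3}$ and critical values $3c^2$ of modulus $3(|\lambda|/2)^{2/3}$, all tending to $0$ and, on the ray $\lambda<0$, lying on the positive real axis and its two symmetric rotates; and (iii) the threefold symmetry $F_\lambda(\omega z)=\omega^2 F_\lambda(z)$, which makes $J(F_\lambda)$ invariant under $z\mapsto\omega z$ and reduces the density check to a $120^\circ$ sector. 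Using that $J(F_\lambda)$ is the closure of the backward orbit of any of its points, I would then track the inverse branches of $F_\lambda$ on the disk: the two ``square-root'' branches $z\approx\pm\sqrt{w}$ send radius $r$ to $\sqrt r$ (pushing outward toward $|z|=1$) while doubling the angular data, so their iterated preimages are dense in argument and accumulate on every radius near $1$; the remaining ``pole'' branch $z\approx\lambda/w$ populates arbitrarily small radii. Combining these two mechanisms --- the pole branch depositing escaping components near the center, the square-root branches spreading their preimages across all intermediate radii and angles --- I would argue that the backward orbit of $T_\lambda(F_\lambda)$, and hence $J(F_\lambda)$, becomes $\varepsilon$-dense in $\overline{\mathbb{D}}$ once $|\lambda|$ is small enough.

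The hard part will be making this density argument uniform and complete: ruling out that some macroscopic gap --- a Fatou domain of definite size, or a surviving attracting cycle --- persists at an interior radius as $\lambda\to 0$, and ensuring that the pulled-back pieces of $J(F_\lambda)$ are genuinely round rather than thin slivers, so that ``$\varepsilon$-dense'' is honest. For the first point I would invoke the special structure of the dividing ray $\mathrm{Arg}(\lambda)=\pi$: it is precisely along this ray that the critical orbits are pinned to the real axis and its symmetric rotates and march toward the escaping region, which is what should force every Fatou component to be eventually escaping and forbid a persistent attracting cycle; off this ray the limiting set could genuinely differ. For the second point I would control the inverse branches by Koebe-type bounded-distortion estimates, valid because the critical values stay at scale $|\lambda|^{2/3}$, comfortably outside the trap door of scale $|\lambda|$, so that the relevant inverse branches are univalent with bounded distortion on neighborhoods of definite size. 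Assembling the outer bound together with this uniform filling yields the Hausdorff convergence $J(F_\lambda)\to\overline{\mathbb{D}}$.
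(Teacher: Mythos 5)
Your outer inclusion is fine and is essentially the same estimate the paper uses for its first step. The genuine gap is in the filling argument, and it is not a repairable technicality: the mechanism you propose cannot produce the density you need. Track the moduli of the preimages you are actually taking. Writing $F_{\lambda}(z)=w$ as $z^{3}-wz+\lambda=0$, a point with $|w|\ll|\lambda|^{2/3}$ has \emph{all three} preimages near the cube roots of $-\lambda$, i.e.\ at modulus $\approx|\lambda|^{1/3}$, while a point with $|w|\gg|\lambda|^{2/3}$ has two preimages of modulus $\approx|w|^{1/2}$ and one of modulus $\approx|\lambda|/|w|\ll|\lambda|^{2/3}$. Starting from $\partial T_{\lambda}$ (modulus $\sim|\lambda|$) and iterating these rules, every point of the backward orbit lies, up to bounded multiplicative constants, on one of the moduli $|\lambda|^{1/(3\cdot 2^{k})}$ or $|\lambda|^{1-1/(3\cdot 2^{k})}$: the square-root branches generate only the sparse sequence $r_{k+1}=\sqrt{r_{k}}$, $r_{0}\approx|\lambda|^{1/3}$, and the pole branch only shuttles points into the region $|z|\ll|\lambda|^{2/3}$, whose further preimages land back at modulus $\approx|\lambda|^{1/3}$. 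For small $|\lambda|$ the consecutive radii $r_{k},\sqrt{r_{k}}$ in the range $0.1$ to $0.6$ are separated by gaps of definite size, independent of $\lambda$. So the set your outline constructs is \emph{not} $\varepsilon$-dense; the phrase ``the square-root branches spreading their preimages across all intermediate radii'' is exactly the false step, and Koebe distortion cannot repair it, because the problem is where the preimages are, not what shape they have. What actually carries $J(F_{\lambda})$ into those gap annuli is something the outline never touches: a point at such an intermediate radius falls forward, under $z\mapsto z^{2}$, into the annulus $|z|\approx|\lambda|^{1/2}$, where $F_{\lambda}\approx\lambda/z$ is nearly an involution ($F_{\lambda}^{2}\approx \mathrm{id}$) and where the finite fixed points sit with multiplier $3z_{0}-1$ --- of modulus less than $1$ for one fixed point when $\lambda>0$, but greater than $1$ for both when $\lambda<0$. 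Whether a definite-size ball at an intermediate radius must meet the Julia set is decided by this slow, nearly neutral dynamics; this is precisely where the hypothesis $\mathrm{Arg}(\lambda)=\pi$ enters, and it is invisible to leading-order inverse-branch bookkeeping.

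Relatedly, the role you assign to the dividing ray is not the right one. The pinning of the critical orbits to the three symmetry lines holds for every real $\lambda$, positive or negative, so by itself it distinguishes nothing; and your stronger claim --- that along the ray the critical orbits ``march toward the escaping region,'' so that every Fatou component is eventually escaping and no attracting cycle persists --- is neither proved by anything you say nor needed, and you should not expect it to be true: the theorem is perfectly compatible with parameters arbitrarily close to $0$ on the ray carrying attracting cycles (small Mandelbrot-like sets in the $\lambda$-plane), and what convergence forces is only that the associated basins be small. The paper's proof (following Devaney--Morabito) is built so as never to need your claim: it assumes a ball $B_{\epsilon}(p)\subset\mathbb{D}$ disjoint from $J(F_{\lambda})$, shows (since $F_{\lambda}\approx z^{2}$ away from the origin) that a forward iterate of this ball contains a curve wrapping around the origin, places that curve, via Sullivan's No-Wandering-Domain Theorem, in an eventually periodic Fatou component, uses the invariant lines $\ell_{i}$ to exclude rotation domains and basins of finite cycles \emph{for that particular component}, so that it must be $B_{\lambda}$, and then derives a contradiction from the Riemann--Hurwitz formula together with a modulus-of-annuli comparison. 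That chain --- wrapping, Sullivan, the trichotomy for the periodic component, Riemann--Hurwitz --- is the real content of the proof, and none of it appears in your proposal. If you wish to keep a direct backward-orbit approach, the statement you would have to prove is a quantitative one about the nearly neutral annulus $|z|\approx|\lambda|^{1/2}$ for $\lambda<0$: that its repelling character spreads any definite-size ball across the gap annuli before it can settle; nothing in the outline does this.
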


\begin{proof}
Let $B_{\epsilon }(z)$ be a ball of radius $\epsilon $ centred at the point $z \in \mathbb{C}$, and let $\mathbb{D}$ be the closed unit disk in the complex plane $\mathbb{C}$. Then for any given $\epsilon > 0$ , if $|\lambda| \leq \epsilon$ and $|z| > 1 + \epsilon$, then we have
\begin{equation*}
|z|^{3} - |z|^{2} = |z|^{2}(|z| - 1) > |z|^{2} \epsilon > \epsilon \geq |\lambda|.
\end{equation*}
Then,
\begin{equation*}
|F_{\lambda}(z)| = |z^{2} + \dfrac{\lambda}{z}| \geq |z|^{2} - \dfrac{|\lambda|}{|z|} = \dfrac{1}{|z|} \left( |z|^{3} - \lambda \right) \geq \dfrac{1}{|z|} |z|^{2} = |z| > 1 + \epsilon. 
\end{equation*}
Therefore. for each $z$ satisfying $|z| > 1 + \epsilon $, we have $z \notin J(F_{\lambda})$. In other words, for each $z \notin \mathbb{D}$, we have $z \notin J(F_{\lambda})$.
\item As claimed before, the dividing ray of this complex map is exactly the negative real line $\mathbb{R}^{-}$. Let $\omega ^{3} = 1$, then $F_{\lambda}(\omega z) = \omega ^{2}F_{\lambda}(z)$, which implies that the Julia set $J(F_{\lambda})$ is symmetric under $z \mapsto \omega z$. Meanwhile, since $F_{\lambda}(\bar z) = \overline{F_{\lambda}(z)}$ for the parameter $\lambda \in \mathbb{R}^{-}$, which implies that the Julia set is symmetric under complex conjugation. Since that $z \notin J(F_{\lambda})$ for each $z \notin \mathbb{D}$ have been proved, what should be proved for the theorem is that for any $z \in \mathbb{D}$, $J(F_{\lambda}) \cap B_{\epsilon }(z) \neq \emptyset $. We will prove this by contradiction as follows.
\item Let us assume that the Julia set $J(F_{\lambda})$ of this family does not converge to the closed unit disk $\mathbb{D}$ as the parameter $\lambda$ approaches 0 along the negative real line $\mathbb{R}^{-}$; meanwhile, there is a sequence $\left( z_{j} \right) _{0}^{\infty } \subset \mathbb{D}$ such that $J(F_{\lambda}) \cap B_{\epsilon }(z) = \emptyset $. Since $\mathbb{D}$ is bounded and closed, then it is a compact region by the Heine-Borel Theorem. Then there exists a subsequence $( a_{k}) _{0}^{\infty }$, which consists of the points in the sequence $\left( z_{j} \right) _{0}^{\infty }$ that converges to some point $p \in \mathbb{D}$, such that $J(F_{a_{k}}) \cap B_{\epsilon }(p) = \emptyset $ for some sufficiently large $k$. 
\item Suppose $|\lambda|$ is small, let $|\lambda| < 1/27$. Then if z is on the circle of radius $|\lambda|^{1/3}$ centered at 0, and denote this circle as $\mathbb{E}$. Then the following inequality holds:
\begin{equation*}
|F_{\lambda }(z)| \leq |z|^{2} + \dfrac{|\lambda|}{|z|} = |\lambda|^{2/3} + \dfrac{|\lambda|}{|\lambda|^{1/3}} = 2|\lambda|^{2/3} < |\lambda|^{1/3} = |z|.
\end{equation*}
Therefore, for small $|\lambda |$, the circle $\mathbb{E}$ is strictly mapped inside itself, which implies that the boundary of trap door $T_{\lambda}$ of this map $|F_{\lambda }|$, which is denoted as $\partial T_{\lambda }$, lies in the circle $\mathbb{E}$ for all such $\lambda $. And thus $\lim_{|\lambda | \to 0} \partial T_{\lambda } = 0$. It follows that for small $|\lambda |$, there are some points in the Julia set of this map arbitrarily close to the origin and hence lies inside the ball $B_{\epsilon} (0)$. Thus, if we assume $p = 0 $, then $J(F_{\lambda }) \cap B_{\epsilon }(p) \neq \emptyset $, which contradicts the conclusion under the assumption that the Julia set $J(F_{\lambda})$ of this map does not converge to the closed unit disk  $\mathbb{D}$ as the complex parameter $\lambda $ approaches the origin along the negative real line $\mathbb{R}^{-}$. Therefore, the subsequence $( a_{k}) _{0}^{\infty }$ cannot converges to 0, and hence $p \neq 0$.
\item Now we consider a circle centered at 0 with radius $|p| > 0$, and denote it as $\mathbb{G}$. Then $\mathbb{G} \cap B_{\epsilon} (p) \neq \emptyset $, and denote the minor arc between their two intersection points as $\zeta $ and its length as $l$. Now we choose $m \in \mathbb{N}^{+}$ such that $\left( 2^{m} \right) l > 2\pi $. Therefore, if $z$ lies in a circle centered at the origin with radius $\dfrac{|p|}{2}$, then  for sufficiently small $|\lambda| > 0$, $|F_{\lambda}^{j}(z) - z^{2^{j}}| \rightarrow 0$.  
\item Thus, the argument of the curve $F_{\lambda} (\zeta )$ increases by $2\pi $ approximately, and therefore  it wraps around the origin at least once. Since there is an three-fold symmetry in the dynamical plane, then the curve $\zeta $ intersect all these three lines $\ell _{1}$, $\ell _{2}$ and $\ell _{3}$. As shown before, $z \mapsto \bar {z}$ since $F_{\lambda}(\bar {z}) = \overline{F_{\lambda}(z)}$; then we know that the $k-th$ fold iteration of $B_{\epsilon}(p)$ contains an annulus that lies in the Fatou set and surrounds the origin. Let $Q$ be the component of the Fatou set that contains the curve $\zeta $, then $Q$ is mapped onto a component of the Fatou set that is periodic (denote this new component as $U$) by the No-Wandering Domain Theorem. However, note that the set $H = \left\{ z: z \in \bigcup_{i=1}^{3} \left( Q \cap \ell _{i} \right) \right\} $ remains on these lines for all iterations. Therefore, $U$ cannot be a basin of attraction of a finite cycle, a Siegel disk, or a Herman ring. Thus, we can conclude that $U = B_{\lambda}(F_{\lambda})$, where $B_{\lambda}(F_{\lambda})$ is the immediate basin of the $\infty $. It follows that $f^{-1}(U) = T_{\lambda}(F_{\lambda}(z))$, where $T_{\lambda}$ is the trap door; and then $W = f^{-2}(U)$ is a Fatou component that contains an annulus that surrounds the origin. Since the trap door $T_{\lambda}$ is a disk and $V$ is not simply connected, then $V$ contains at least one critical point of $F_{\lambda}$. It follows that $V$ contains all critical points take this form, $\left( \lambda/2 \right)^{1/3}$, by symmetry. Then, by the Riemann-Hurwitz Theorem, $V$ is an annulus that is mapped 2 to 1 onto $T_{\lambda}$. Let $O$ be the open annulus lies between $B_{\lambda}$ and $T_{\lambda}$. Then $O$ is separated by $V$ into two annuli, an inner one denoted as $O_{1}$ and an outer one denoted as $O_{2}$. Then $O_{1}$ is mapped onto $O$ under $F_{\lambda}$ in a one-to-one pattern, while $O_{2}$ is mapped onto $O$ under $F_{\lambda}$ in an $n$ to one pattern. It follows that mod $O$ = mod $O_{1}$. Since the inner boundaries of $O$ and $O_{1}$ overlap, then $O_{2}$ cannot exist. Therefore, we obtain a contradiction. Therefore, for any $z \in \mathbb{D}$, $B_{\epsilon }(z) \cap J({F_{\lambda}}) \neq \emptyset $. Now the theorem is proved.    
\end{proof}

\begin{remark}
This remark is about the No-Wandering-Domain Theorem referred in the above proof:
\begin{theorem}
\emph{(D. Sullivan \cite{Sullivan}, 1985)} 
\item Let $f: \mathbb{\widehat{C}}\rightarrow\mathbb{\widehat{C}}$ be a rational map of degree $deg(f) \geq 2$, then $f$ does not have a wandering domain. 
\qed
\end{theorem}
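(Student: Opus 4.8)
The plan is to argue by contradiction, using the quasiconformal surgery technique that is the heart of Sullivan's argument: the Ahlfors--Bers Measurable Riemann Mapping Theorem together with the fact that the space of rational maps of a fixed degree is finite-dimensional. Recall that a \emph{wandering domain} is a connected component $U$ of the Fatou set whose forward images $U_{n} = f^{n}(U)$, $n \geq 0$, are pairwise disjoint, so that $U$ is never eventually periodic. Suppose, for contradiction, that such a $U$ exists. The idea is to manufacture from $U$ an infinite-dimensional family of genuinely distinct rational maps of the fixed degree $d = \deg(f)$, contradicting the finite-dimensionality of the parameter space.

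First I would put the orbit in a convenient form. Since $f$ has only $2d-2$ critical points counted with multiplicity, only finitely many of the components $U_{n}$ can contain a critical point; after replacing $U$ by a sufficiently high forward image $U_{N}$, I may assume $f$ carries $U_{n}$ onto $U_{n+1}$ with no critical points in the orbit, so that transporting an almost-complex structure forward along the orbit is unambiguous. Now let $\nu$ be any measurable Beltrami differential supported on $U$ with $\|\nu\|_{\infty} \leq k < 1$. I spread $\nu$ over the entire grand orbit $\bigcup_{m,n \geq 0} f^{-m}(U_{n})$ by pulling back under iterates of $f$ and pushing forward along the orbit of $U$, and I set the resulting differential $\mu$ equal to $0$ off the grand orbit. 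By construction $\mu$ is $f$-invariant, meaning that $f$ is holomorphic for the almost-complex structure defined by $\mu$, and $\|\mu\|_{\infty} = \|\nu\|_{\infty} < 1$. The Measurable Riemann Mapping Theorem then produces a quasiconformal homeomorphism $\phi^{\mu}\colon \mathbb{\widehat{C}} \to \mathbb{\widehat{C}}$, normalized to fix $0,1,\infty$, whose Beltrami coefficient is $\mu$. Because $\mu$ is $f$-invariant, the conjugate
\begin{equation*}
f_{\mu} = \phi^{\mu} \circ f \circ (\phi^{\mu})^{-1}
\end{equation*}
transports the standard complex structure to itself, so $f_{\mu}$ is again a holomorphic self-map of $\mathbb{\widehat{C}}$, i.e.\ a rational map, of the same degree $d$.

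The contradiction is dimensional. The space of rational maps of degree $d$ is a complex manifold of dimension $2d+1$, so even after quotienting by Möbius conjugacy the set of conjugacy classes is finite-dimensional. On the other hand, the Beltrami differentials supported on $U$ form an infinite-dimensional space, and the assignment $\nu \mapsto f_{\mu}$ is holomorphic. The main obstacle, and the true content of the theorem, is to show that this assignment is \emph{non-degenerate}: that its derivative at $\nu = 0$ has a kernel of infinite codimension, equivalently that varying $\nu$ produces infinitely many rational maps that are pairwise non-conjugate. This is exactly where the wandering hypothesis is essential. Because the $U_{n}$ are pairwise disjoint, the moduli of the conformal structure placed on $U$ are recorded faithfully and independently in the deformed dynamical plane, so the infinitesimally trivial deformations (those of the form $\bar{\partial} v$ for a globally continuous vector field $v$ on $\mathbb{\widehat{C}}$) cannot exhaust all but finitely many directions. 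Hence the image of $\nu \mapsto f_{\mu}$ is infinite-dimensional, which is impossible inside a $(2d+1)$-dimensional manifold. Establishing this non-degeneracy rigorously, controlling how an arbitrary complex structure on $U$ survives both the spreading over the grand orbit and the passage to $f_{\mu}$, is the hard part of the argument; by comparison, the reductions concerning critical points and the connectivity of the components $U_{n}$ are routine technicalities.
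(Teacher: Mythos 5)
First, a point of comparison: the paper does not actually prove this theorem. It appears inside a remark, stated with a citation to Sullivan's 1985 paper and a \qed{} symbol, and is used purely as a black box in the proof of Theorem 3.4. So your proposal cannot be judged against any proof in the paper; it has to be judged against Sullivan's argument itself, which is indeed what you are sketching: pass to a high forward image so that the orbit of the wandering component $U$ avoids the finitely many critical points, spread a Beltrami differential $\nu$ supported on $U$ over the grand orbit so that it becomes $f$-invariant, integrate it by the Measurable Riemann Mapping Theorem, note that $f_{\mu}=\phi^{\mu}\circ f\circ(\phi^{\mu})^{-1}$ is again rational of the same degree $d$, and contradict the finite dimension $2d+1$ of the space of degree-$d$ rational maps. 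As an outline, this is faithful to Sullivan's strategy.

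However, as a proof the proposal has a genuine gap, and it is the one you name yourself: you never establish that $\nu\mapsto f_{\mu}$ produces an infinite-dimensional set of pairwise non-conjugate rational maps. This cannot be set aside as the ``hard part'' to be filled in later, because it is where essentially all of the content lies; the rest of the construction, applied verbatim to a non-wandering invariant region, produces nothing. For instance, the $f$-invariant Beltrami differentials supported on the basin of an attracting fixed point also form an infinite-dimensional space, yet the resulting maps $f_{\mu}$ sweep out only a finite-dimensional family of conjugacy classes: almost all of those deformations are dynamically trivial, since they factor through the finite-dimensional Teichm\"uller space of the quotient torus of the basin. So ``an infinite-dimensional space of invariant Beltrami differentials'' never yields a contradiction by itself; one must use the wandering hypothesis to show that the deformations are registered by the conjugacy class of $f_{\mu}$ up to at most a finite-dimensional space of trivial directions. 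Proving this occupies the bulk of Sullivan's paper and proceeds by cases on the connectivity of $U$ (simply connected, finitely connected, infinitely connected), constructing in each case explicit deformations that demonstrably change the map. A second, smaller issue: your ``push forward along the orbit'' step is not automatic, since $f:U_{n}\rightarrow U_{n+1}$ need not be injective even in the absence of critical points (by Riemann--Hurwitz it can be a nontrivial covering when the components are multiply connected), so well-definedness of the push-forward requires either deck-invariance of $\nu$ or a further reduction. That issue is repairable; the non-degeneracy gap is not, short of reproducing the heart of Sullivan's argument.
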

This theorem can be stated as following alternative version: every component $U$ of the Fatou set of this rational map $f$ is eventually periodic; that is, there exist $m > n > 0$, where $m,n \in \mathbb{N}$ such that $f^{m}(U) = f^{n}(U)$. This theorem is first proved by D. Sullivan
 \cite{Sullivan}. And for more discussion about the wandering domain in dynamical systems, see reference \cite{Eckmann}.  
\end{remark}

\begin{remark} 
We briefly discuss the Siegel disk and Herman ring in this remark:
\item Both a Siegel disk and a Herman ring are two types of components of Fatou set. The Fatou component is defined as the maximum connected open subset of the Fatou set. Let $f(z): \mathbb{{C}}\rightarrow\mathbb{\widehat{C}}$ be a holomorphic (or entire) or meromorphic function, and suppose that $V$ is an $n-$periodic Fatou component., then the classifications of the Fatou components are as follows, and one and only one of them will occur:(1)Attracting basin: If for all $z \in V$, $\lim_{k \to \infty } \left( f^{n}\right) ^{k}(z) = q$, where $q$ is an $n-$period attracting point in $V$, then $V$ is an attracting basin; (2)Parabolic basin: If for all $z \in V$, there exists $s \in \partial V$, where $s$ is a rationally indifferent $n-$period point, such that $\lim_{k \to \infty } \left( f^{n}\right) ^{k}(z) = s$, then $V$ is a parabolic basin; (3)Siegel disk: If there exists an analytic homeomorphism $\varphi: V \rightarrow \mathbb{D}$, where $\mathbb{D}$ is a closed unit disk, such that $\varphi \circ f^{k} \circ \varphi ^{-1} (z) = e^{2i\pi \alpha }z$ for some $\alpha \in \mathbb{R} \setminus \mathbb{Q}$ (thus Siegel disks are simply connected by definition); (4)Herman ring: If there exists an analytic homeomorphism $\varphi: V \rightarrow S $, where $S = \left\{ z: 1 < |z| < r \right\} $ for some $r > 1$, such that $\varphi \circ f^{k} \circ \varphi ^{-1} (z) = e^{2i\pi \alpha }z$ for some $\alpha \in \mathbb{R} \setminus \mathbb{Q}$; (5)Baker domain: If for all $z \in V$, $\lim_{k \to \infty } \left( f^{n}\right) ^{k}(z) = \infty $, then $V$ is baker domain. However, note that the case (5) only exists when $f(z)$ is a transcendental function; for polynomials and rational functions, there are only four possibilities (1)$\sim $(4).  
\end{remark}

\begin{remark}
For a more general case, $F_{\lambda}(z) = z^{n} + \dfrac{\lambda}{z}$, in which $\lambda \in \mathbb{C}$, the dividing rays are given by 
\begin{equation*}
Arg \left( \lambda \right) = \dfrac{(2k+1)\pi }{n-1},
\end{equation*}
where $k \in \mathbb{N}$ and $0 \leq k \leq n-1$. In this case, the convergence theorem of Julia set takes a more general form, that is, 
For $F_{\lambda}(z) = z^{n} + \dfrac{\lambda}{z}$, in which $\lambda \in \mathbb{C}$, the Julia set $J(F_{\lambda})$ converges as a set to the closed unit disk as $\lambda$ approaches to 0 along each of the dividing rays $\text{Arg}\left(\lambda \right) = \pi $ in the parameter plane.
\qed
\end{remark}

\item In the previous parts, our discussion mainly focuses on the bounded orbits of this family. Now we consider the points whose orbits approach to infinity. 
\begin{theorem}
For $F_{\lambda}: \mathbb{C}\rightarrow\mathbb{C}$, where $F_{\lambda}(z) = z^{2} + \dfrac{\lambda}{z}$, in which $\lambda \in \mathbb{C}$, the orbit of a point z which satisfies $|z| > 1 + |\lambda|$ approaches to infinity.
\qed 
\end{theorem}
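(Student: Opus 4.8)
The plan is to show that the region $\{z : |z| > 1 + |\lambda|\}$ is forward invariant under $F_{\lambda}$ and that moduli grow geometrically on it, so that every orbit starting there is forced out to infinity. Throughout I write $z_{0}$ for the initial point and $z_{n} = F_{\lambda}^{n}(z_{0})$.

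First I would record the elementary one-step estimate coming from the reverse triangle inequality: for every $z \neq 0$,
\[
|F_{\lambda}(z)| = \left| z^{2} + \frac{\lambda}{z} \right| \geq |z|^{2} - \frac{|\lambda|}{|z|} = |z|\left( |z| - \frac{|\lambda|}{|z|^{2}} \right).
\]
Assuming $|z| > 1 + |\lambda|$ I would then note $|z| > 1$, hence $|\lambda|/|z|^{2} < |\lambda|$, so the bracketed factor satisfies $|z| - |\lambda|/|z|^{2} > (1+|\lambda|) - |\lambda| = 1$. This already yields $|F_{\lambda}(z)| > |z| > 1 + |\lambda|$, which is exactly forward invariance: the iterate stays in the same region and its modulus strictly increases.

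The key remaining point is to upgrade ``strictly increasing'' to ``divergent,'' since a strictly increasing sequence of moduli could a priori converge to a finite limit. To rule this out I would extract a uniform multiplicative gap. Set $\mu := |z_{0}| - |\lambda|/|z_{0}|^{2}$, which is $>1$ by the estimate above. Because the auxiliary function $t \mapsto t - |\lambda|/t^{2}$ is strictly increasing for $t > 0$, and forward invariance guarantees $|z_{n}| \geq |z_{0}|$ for all $n$, the bracketed factor evaluated at $z_{n}$ is at least $\mu$; the one-step estimate then gives $|z_{n+1}| \geq \mu\, |z_{n}|$ for every $n$. By induction $|z_{n}| \geq \mu^{n} |z_{0}|$, and since $\mu > 1$ this forces $|z_{n}| \to \infty$, i.e. the orbit of $z$ approaches infinity.

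I expect no serious obstacle, as the whole argument is an elementary modulus estimate. The only point requiring a little care is the last one: producing a single constant $\mu > 1$ that bounds the ratios $|z_{n+1}|/|z_{n}|$ from below uniformly in $n$, rather than merely showing each individual step increases the modulus. The choice of threshold $1 + |\lambda|$ is precisely what makes $|z_{0}| - |\lambda| > 1$, which is the inequality that both establishes forward invariance and furnishes the uniform gap $\mu$; a weaker threshold would still give monotone growth but would not obviously exclude convergence to a finite limit.
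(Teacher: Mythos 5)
Your proof is correct, and its second half takes a genuinely different route from the paper's. Both arguments open identically, with the reverse-triangle-inequality estimate $|F_{\lambda}(z)| \geq |z|^{2} - |\lambda|/|z|$ and the observation that the region $|z| > 1 + |\lambda|$ is forward invariant with strictly increasing moduli. Where you diverge is in upgrading monotonicity to divergence: the paper argues by contradiction and compactness, supposing the moduli converge to a finite limit $a$, extracting a subsequential limit point $z_{\mathrm{limit}}$ of the orbit with $|z_{\mathrm{limit}}| = a$, and using continuity of $F_{\lambda}$ to contradict the strict increase $|F_{\lambda}(z)| > |z|$ at that point. You instead extract a uniform multiplicative gap $\mu = |z_{0}| - |\lambda|/|z_{0}|^{2} > 1$, using the monotonicity of $t \mapsto t - |\lambda|/t^{2}$ together with $|z_{n}| \geq |z_{0}|$ to get $|z_{n+1}| \geq \mu |z_{n}|$, hence the explicit geometric lower bound $|z_{n}| \geq \mu^{n}|z_{0}|$. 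Your version is more elementary and more informative: it produces an explicit escape rate and avoids the soft limit-point argument, which in the paper is stated somewhat loosely (the moduli sequence and the orbit itself are conflated, and the contradiction inequality mixes a modulus with a complex number). The paper's compactness argument, for its part, needs no auxiliary monotone function and would apply verbatim in settings where no uniform gap is available; but for this particular threshold, your quantitative route is the cleaner of the two. One cosmetic point: when $\lambda = 0$ the strict inequality $|\lambda|/|z|^{2} < |\lambda|$ fails (both sides vanish), so you should write $|z| - |\lambda|/|z|^{2} \geq |z| - |\lambda| > 1$, which covers all cases at once.
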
 
\begin{proof}
Let $\epsilon > 0$ and $|z| > 1 + |\lambda|$, then $z^{2} > 1$ and $|z| - 1 > |\lambda|$, which implies that $|z^{2}|(|z| - 1 )> |\lambda|$. Therefore, $ 1 + |\lambda| < |z| < |z^{2}| - |\dfrac{\lambda}{z}| < |F_{\lambda}(z)| < |F_{\lambda}^{2}(z)|$, and the sequence $\{ |F_{\lambda}^{n}(z)| \} _{n}$ is monotonically increasing. Assume that this sequence approaches to a finite limit when $n$ approaches to $\infty $, and denote this limit as $a$. Then the orbit of any $|z| > 1 + |\lambda|$ is bounded by a circle at the origin with radius $a$. Since this circle is compact, then there exists one limit point $z_{limit}$ on the circle ($|z_{limit}| = a$) for $\{ |F_{\lambda}^{n}(z)| \} _{n}$, which implies that $|F_{\lambda}(z_{limit})| \leq z_{limit}$. However, we had shown that $|z| < |F_{\lambda}(z)|$ for all $|z| > 1 + |\lambda|$; therefore, this is a contradiction. 
\end{proof}

\item Now we can summarize the escape theorem for this family on the complex plane:
\begin{theorem}
\emph{(R. Devaney, and M. Morabito \cite{Morabito}, 2004)}
For the family $F_{\lambda}: \mathbb{C}\rightarrow\mathbb{C}$, where $F_{\lambda}(z) = z^{2} + \dfrac{\lambda}{z}$, $\lambda \in \mathbb{C}$, and let $C_{0}$ be a critical point of $F_{\lambda}(z)$ then we have:\\
(1)If one and hence all $C_{0} \in B_{\lambda }$, then $J$ is a Cantor set; \\
(2)If one and hence all $C_{0} \notin B_{\lambda }$ but $C_{0} \in T_{\lambda }$ , then $J$ is a Cantor set of simple closed curves;\\ 
(3)If all $C_{0}$ lie in preimages of $T_{\lambda }$ under $F_{\lambda}^{j}$ for some $j > 0$, then $J$ is an S-Curve and hence is homeomorphic to the Sierpi\'{n}ski Curve \footnotemark[1] .
\qed
\end{theorem}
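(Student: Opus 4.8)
The plan is to classify the dynamics by the fate of the free critical orbit, reducing to a single orbit by symmetry and then treating the three cases in turn. First I would locate the critical points: solving $F_\lambda'(z) = 2z - \lambda/z^2 = 0$ gives $z^3 = \lambda/2$, so the three finite free critical points are $C_0 = (\lambda/2)^{1/3}$ and its two rotates by the cube roots of unity; together with the superattracting fixed point at $\infty$ (of local degree two) these exhaust the $2 \cdot 3 - 2 = 4$ critical points of the degree-three rational map $F_\lambda$, whose only pole is the simple pole at $0$. Because $F_\lambda(\omega z) = \omega^2 F_\lambda(z)$ with $\omega = e^{2\pi i/3}$ (established above), the three finite critical orbits are permuted by the rotation $z \mapsto \omega z$, so they share a common itinerary relative to $B_\lambda$ and $T_\lambda$; this is precisely the ``one and hence all'' clause. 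I record that $T_\lambda$ is the component of $F_\lambda^{-1}(B_\lambda)$ surrounding $0$, and that when the free critical points avoid $B_\lambda$ the simple pole makes $T_\lambda$ map one-to-one onto $B_\lambda$ while $B_\lambda$ covers itself with degree two.

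For clause (1), suppose $C_0 \in B_\lambda$, so every critical orbit converges to $\infty$. By Theorem 3.6 the round disk $\Delta = \{\,|z| > 1 + |\lambda|\,\}$ is forward invariant and lies in $B_\lambda$. Choosing a high enough preimage $W \supset \Delta$ so that the compact complement $\widehat{\mathbb{C}} \setminus W$ contains no critical point, $F_\lambda$ covers $\widehat{\mathbb{C}} \setminus \Delta$ over $\widehat{\mathbb{C}} \setminus W$ without branching; the preimages then nest down and $J = \bigcap_{n \ge 0} F_\lambda^{-n}(\widehat{\mathbb{C}} \setminus W)$ is a Cantor set on which $F_\lambda$ is conjugate to a one-sided shift, exactly the mechanism of the Cantor-set case of Theorem 3.1.

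For clause (2), suppose $C_0 \in T_\lambda$ but $C_0 \notin B_\lambda$; then the critical value $F_\lambda(C_0) \in B_\lambda$ escapes in one step while the critical point itself lies in the trap door. Here $B_\lambda$ and $T_\lambda$ are disjoint quasidisks, and I would work in the open annulus $O$ between $\partial B_\lambda$ and $\partial T_\lambda$. Its preimage is again an annulus that separates $O$ into an inner and an outer subannulus, and iterating the pull-back of $O$ produces a nested sequence of annular neighbourhoods of $J$. Tracking the mapping degrees by the Riemann-Hurwitz formula (as in the proof of Theorem 3.4) and controlling the moduli of the nested annuli, one shows that the intersection is a Cantor set whose pieces are disjoint simple closed curves surrounding the origin, which is McMullen's Cantor-set-of-circles structure.

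For clause (3), suppose $F_\lambda^{j}(C_0) \in T_\lambda$ for some $j > 0$ with $C_0$ in neither $B_\lambda$ nor $T_\lambda$. The Fatou set then consists exactly of $B_\lambda$, $T_\lambda$ and their iterated preimages, and I would prove $J$ is a Sierpi\'{n}ski curve by verifying Whyburn's topological characterization: $J$ is compact, connected, locally connected, nowhere dense, and the complementary Fatou components are bounded by pairwise disjoint simple closed curves. Connectivity holds because the critical orbits enter $B_\lambda$ only through the trap door rather than landing in $B_\lambda$ directly, which rules out the totally disconnected structure of clause (1); the Fatou components are Jordan domains (the one carrying $C_0$ being a disk mapped two-to-one onto $T_\lambda$) whose closures must be shown pairwise disjoint; and local connectivity follows from the eventual uniform expansion of $F_\lambda$ off its Fatou set, since all critical orbits escape. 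I expect clause (2) to be the main obstacle: unlike the soft symbolic-dynamics argument of clause (1) and the topological verification of clause (3), the Cantor-set-of-circles structure demands quantitative modulus estimates and exact Riemann-Hurwitz bookkeeping for the nested preimages of $O$, and it is there that the delicate analytic control near $\lambda = 0$ is required.
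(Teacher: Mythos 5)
A preliminary remark: the paper never actually proves this theorem --- it is quoted from Devaney and Morabito with an end-of-proof mark placed immediately after the statement --- so there is no in-paper argument to compare yours against, and what follows judges your proposal on its own terms. Your setup (critical points $(\lambda/2)^{1/3}$, the count $2\cdot 3-2=4$, the symmetry $F_{\lambda}(\omega z)=\omega^{2}F_{\lambda}(z)$ giving the ``one and hence all'' reduction) agrees with what the paper establishes in Sections 3.3 and 3.6, and your sketches of clause (1) (nested unbranched preimages, conjugacy to a one-sided shift) and clause (3) (verification of Whyburn's characterization, with local connectivity coming from hyperbolicity since every critical orbit escapes) follow the standard line and are plausible modulo the usual details, chiefly that in (3) the Fatou components have simple closed curve boundaries with pairwise disjoint closures.

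The genuine gap is clause (2), and the seed of the problem is already visible in your own write-up. You correctly observe that when the free critical points avoid $B_{\lambda}$, the restriction $F_{\lambda}\colon T_{\lambda}\to B_{\lambda}$ is one-to-one: $F_{\lambda}$ has degree $3$, the self-map $F_{\lambda}\colon B_{\lambda}\to B_{\lambda}$ already has degree $2$ (its only critical point being $\infty$, so Riemann--Hurwitz on the simply connected basin forces degree exactly $2$), and the single remaining unit of degree is carried by the trap door. But a proper map of degree one has no critical points, so the hypothesis of clause (2) --- $C_{0}\in T_{\lambda}$ with $C_{0}\notin B_{\lambda}$ --- can never be satisfied for this family: clause (2) is vacuous, an artifact of transcribing the trichotomy from the general family $z^{n}+\lambda/z^{d}$, where it is non-empty only when $1/n+1/d<1$. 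Consequently your plan to establish the Cantor-set-of-circles structure by McMullen's annulus construction cannot be carried out: writing $O$ for the annulus between $\overline{T_{\lambda}}$ and $\overline{B_{\lambda}}$, the two essential preimage annuli would have moduli $\mathrm{mod}(O)/n$ and $\mathrm{mod}(O)/d$, and the Gr\"{o}tzsch inequality would force $1/n+1/d\le 1$, whereas here $1/2+1/1=3/2$; the same inequality rules the case out even if one reads ``critical value'' in place of ``critical point.'' So the ``delicate modulus estimates'' you single out as the main obstacle would terminate in a contradiction, which is precisely the proof that the case is empty --- consistent with the paper's Theorem 3.4 (Julia sets of this family converge to the closed unit disk as $\lambda\to 0$, exactly the regime where Cantor circles would otherwise appear) and with the remark in Section 3.4 that no McMullen domain exists when $n=2$. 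The correct disposal of clause (2) is the one-line degree count above, not hard analysis; the substantive work in a complete proof lies entirely in clauses (1) and (3).
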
 
\renewcommand{\thefootnote}{\fnsymbol{footnote}}
\footnotetext[1] {\emph {The definitions of S-curve and Sierpi\'{n}ski curve will be given in \textbf {Section 3.4}}.}

\clearpage
\subsection{Singular Perturbations of Quadratic Family when $m=2$} 
\item With a simple appearance, it is surprising that the dynamics of this map is the most complicated dynamics in the family $F_{\lambda}(z) = z^{2} + \dfrac{\lambda}{z^{2}}$, in which $\lambda \in \mathbb{C}$. Firstly, let us summarize several important points for the dynamics of this map: (1)One and Only One Pole: $P_{0} = 0$; (2)Four Prepoles: $P_{p} = (-\lambda )^{1/4}$ since ${P_{p}}^{2} + \dfrac{\lambda }{{P_{p}}^{2}} = 0$; (3)Critical Points: $C_{0} = {\lambda }^{1/4}$, the pole 0, and the super-attracting fixed point $\infty $; and the union of the orbits of these critical points is named as \textbf {critical orbit}. 
\begin{remark}
\item 
In \cite{Devaney IV} R.Devaney summarized the three main reasons that make $n = 2$ to be the most complicated case in the family 
$F_{\lambda}(z) = z^{n} + \dfrac{\lambda}{z^{n}}$: (1) There is always a MuMullen domain (whose definition will be given later) around the origin in the parameter plane when $n > 2$, while such a structure does not exist when $n = 2$; (2) The McMullen domain is surrounded by infinitely many \textbf {Mandelpinski necklaces} (the disjoint simple closed curves surround the McMullen domain), while there is none of these structures around 0 in the parameter plane; (3) The Julia set for the map when $n = 2$ converges to a closed unit disk when $\lambda $ approaches to the origin, while the Julia set for the map when $n > 2$ is always Cantor set of simple closed curves.  
\end{remark}

\begin{remark}
\item 
The orbits of the four critical points degenerate to one after two iterations $F_{\lambda }^{2}(C_{0}) = {C_{0}}^{2} + \dfrac{\lambda }{C_{0}^{2}}$, and  all of them are on the circle of radius $|\lambda |^{1/4}$. 
\end{remark}

\item Before further discussion, we introduce several related concepts and theorems. First thing is the \textbf{S-curve}, which is defined as a plane locally connected one-dimensional continuum S such that the boundary of each complementary domain of S is a simple closed curve and any two of these complementary domain boundaries are disjoint \cite{Whyburn}. The other object we need to introduce is \textbf{Sierpi\'{n}ski carpet fractal}, which is constructed as follows \cite{Devaney VII} \cite{Devaney VIII}: (1) Start with a unit square in the plane and divide it into  nine subsquares; (2)Remove the open middle square and leave the other eight closed squares; (3) For the eight closed squares obtained in the last step, repeat the previous two steps, which will leaves 64 smaller squares; (4) Repeat this process infinitely many times, then the Sierpi\'{n}ski Carpet Fractal is constructed. A \textbf{Sierpi\'{n}ski curve} is a planar set that is compact, connected, nowhere dense, locally connected, and any two complementary domains are bounded by mutually disjoint simple closed curves \cite{Look} (in other words, a Sierpi\'{n}ski Curve is a planar set that is homeomorphic to the Sierpi\'{n}ski carpet).   

\item A Sierpi\'{n}ski curve possesses rich topology, and it is called as ``universal" planar sets due to its strong topological property stated in the following theorem \cite{Whyburn}:

\begin{theorem}
\emph{(Whyburn \cite{Whyburn}, 1958)}
Any two S-Curves are homeomorphic, and every S-Curve is homeomorphic with the Sierpi\'{n}ski Curve. 
\qed
\end{theorem}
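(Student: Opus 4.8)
The plan is to prove both assertions at once by reducing everything to a single model space. Since a composition of homeomorphisms is again a homeomorphism, it suffices to show that every S-curve is homeomorphic to the standard Sierpi\'{n}ski carpet $M$; then any two S-curves $S$ and $S'$ satisfy $S \cong M \cong S'$. First I would verify that $M$ itself is an S-curve in the sense of the definition given above --- it is a compact, connected, locally connected, nowhere dense one-dimensional continuum whose complementary domains (the deleted open squares together with the unbounded region) are bounded by pairwise disjoint simple closed curves --- so that the model genuinely belongs to the class under discussion.

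The heart of the argument is then to manufacture, for an arbitrary S-curve $S$ embedded in the sphere $\widehat{\mathbb{C}}$, a homeomorphism $h : M \to S$. Let $\{Q_{0}, Q_{1}, \dots\}$ enumerate the complementary domains of $M$ and $\{R_{0}, R_{1}, \dots\}$ those of $S$, with $Q_{0}, R_{0}$ the unbounded components. The first key step is to show that both collections are null families, i.e. $\operatorname{diam} Q_{i} \to 0$ and $\operatorname{diam} R_{i} \to 0$: this follows from local connectedness together with nowhere-density, since for each $\epsilon > 0$ only finitely many complementary domains of a locally connected continuum can have diameter exceeding $\epsilon$. By hypothesis the boundaries $\partial Q_{i}$ and $\partial R_{i}$ are disjoint simple closed curves.

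Next I would build the homeomorphism by a back-and-forth induction on the sphere. At stage $n$ I match the largest so-far-unmatched complementary domain of $M$ with the largest unmatched domain of $S$, and I define (or extend) a sphere homeomorphism $h_{n} : \widehat{\mathbb{C}} \to \widehat{\mathbb{C}}$ carrying the chosen $\overline{Q_{i}}$ onto the chosen $\overline{R_{j}}$; the Jordan--Schoenflies theorem guarantees that any homeomorphism prescribed on a simple closed curve extends across the closed Jordan domain it bounds and across the complementary region, so the extension exists and can be arranged to agree with $h_{n-1}$ on all previously matched closed domains. Because the matched domains shrink (null family), the modifications $h_{n} \circ h_{n-1}^{-1}$ can be kept uniformly small off the unmatched part, and the sequence $(h_{n})$ converges uniformly to a continuous map $h$. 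Using that the complementary domains are dense in each carpet and accumulate on every residual point from all sides, I would argue that $h$ is injective, hence a homeomorphism of $\widehat{\mathbb{C}}$ whose restriction carries $M$ onto $S$.

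The step I expect to be the main obstacle is controlling the limit on the residual set, i.e. proving that $h$ is genuinely bijective and continuous on the carpet points lying on no complementary boundary. Each $h_{n}$ is constrained on only finitely many Jordan curves, so one must organize the matching (by diameter, and respecting the way small domains cluster near each point of the carpet) so that every pair of distinct residual points is eventually separated by a matched pair of arbitrarily small complementary domains, while no point is torn apart in the limit. This is exactly where the null-family property and the nowhere-density of $S$ are indispensable, and where a Moore-type upper-semicontinuous decomposition argument --- collapsing each closed complementary disk to a point, so that the quotient $\widehat{\mathbb{C}}/\mathcal{G}$ is again a sphere --- can be invoked to certify that the limiting decomposition yields a genuine sphere homeomorphism, forcing $h$ to be the desired homeomorphism of $M$ onto $S$.
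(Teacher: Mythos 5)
The paper itself offers no proof of this statement: it is quoted as Whyburn's 1958 theorem and stated with a \qed, so the only thing to compare your proposal against is the classical argument in \cite{Whyburn}. Your overall strategy (fix the standard carpet $M$ as a model, show the complementary domains of any S-curve form a null family, then build a sphere homeomorphism by successive Jordan--Schoenflies extensions matched domain by domain) is the right general shape, and your reduction ``every S-curve $\cong M$ implies any two S-curves are homeomorphic'' is exactly how the two clauses of the theorem are related. The null-family step is also correctly identified: local connectedness of a continuum in $S^{2}$ does force the complementary domains to have diameters tending to $0$.

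The genuine gap is in the matching and convergence step, and it is not a detail. Pairing ``the largest unmatched domain of $M$ with the largest unmatched domain of $S$'' carries no positional information: nothing prevents the $n$-th matched domain of $M$ from sitting next to the first one while its partner in $S$ sits far away, and then the maps $h_{n}$ need not be Cauchy, and the limit (if it exists) need not be injective on the residual set. What is needed, and what constitutes the real content of Whyburn's proof, is a device that forces the correspondence of domains to respect separation structure: Whyburn partitions each S-curve by arcs into finitely many pieces, each of which is again (essentially) an S-curve of small diameter, proves that any two such partitions admit isomorphic refinements, and obtains the homeomorphism as a limit of maps compatible with an infinite nested sequence of matched partitions. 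Your closing appeal to a Moore-type decomposition does not repair this: collapsing each closed complementary disk to a point does yield a quotient sphere $\widehat{\mathbb{C}}/\mathcal{G} \cong S^{2}$ by Moore's theorem, but the quotient map restricted to the carpet is far from injective (every boundary circle collapses), so it certifies nothing about injectivity of $h$ on $M$, and no homeomorphism $M \to S$ can be extracted from it. As written, the proposal would stall exactly at the obstacle you flagged; the missing idea is the isomorphic-refinement (partition) machinery, not a decomposition-space argument.
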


\begin{figure}[h]
\centering
\centerline{
\includegraphics[scale=0.4]{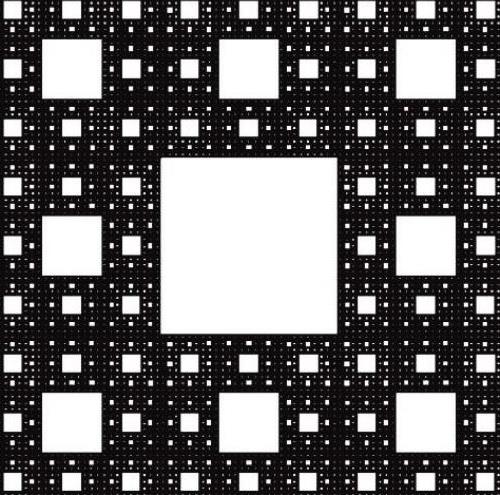}
\includegraphics[scale=0.4]{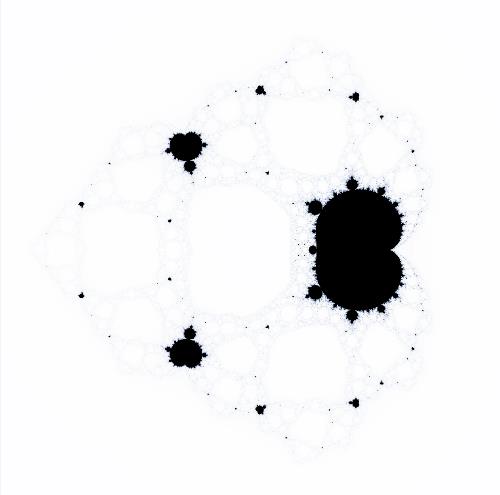}
\includegraphics[scale=0.4]{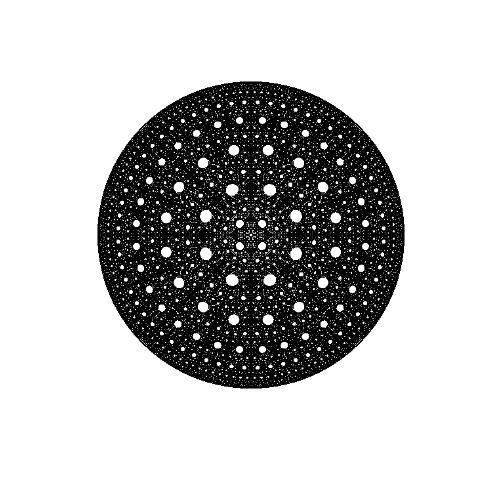}}
\centerline{
\includegraphics[scale=0.4]{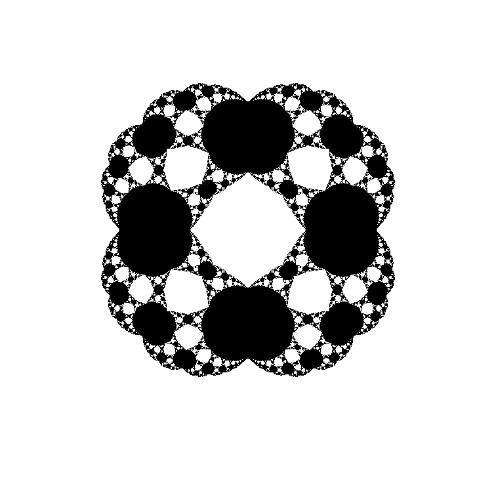}
\includegraphics[scale=0.4]{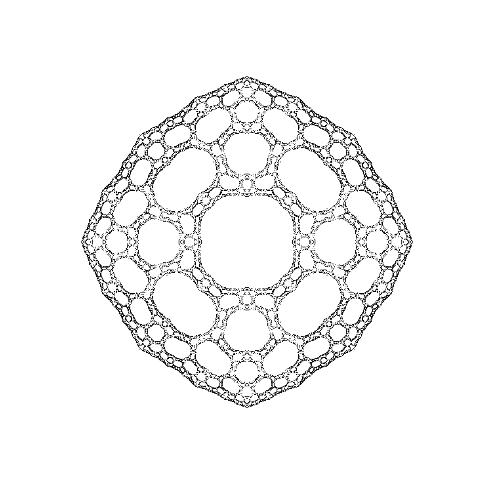}
\includegraphics[scale=0.4]{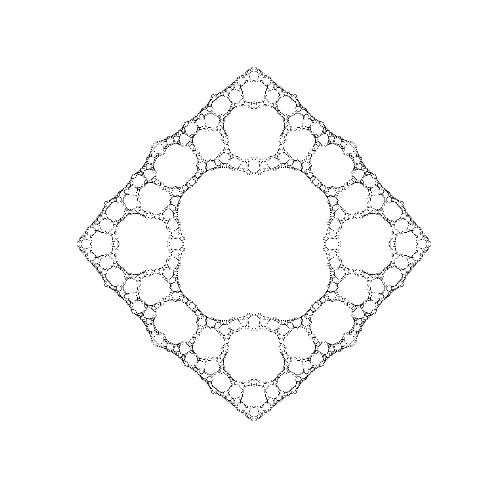}}
\caption{\small (1)Upper Left: The Sierpi\'{n}ski Carpet Fractal; (2)Upper Middle: The Escape Figure of the Parameter Plane of $z^{2} + \dfrac{\lambda}{z^{2}}$; (3)Upper Right: The Julia Set of $z^{2} + \dfrac{0.0001}{z^{2}}$; (4)Lower Left: The Julia Set of $z^{2} + \dfrac{0.1}{z^{2}}$; (5)Lower Middle: The Julia Set of $z^{2} + \dfrac{-0.1}{z^{2}}$ ; (6)Lower Right:  The Julia Set of $z^{2} + \dfrac{-0.25}{z^{2}}$.}
\end{figure}

\item The escape figure of the parameter plane of $F_{\lambda }(z) = z^{2} + \dfrac{\lambda}{z^{2}}$ when $\lambda \in \mathbb{C}$ is shown in Fig.11-(2). The critical orbit for the parameter values in the coloured regions stays bounded, and the Julia set for the parameter values in these regions is connected. The white region represents the parameter values for which the critical orbit escapes to $\infty $. and there are two different dynamics correspond to the parameters in the white regions: (1)The small region in the center of the parameter plane is called \textbf{McMullen domain}, the Julia set for the parameter values in this region is a Cantor set of simple closed curves; (2)For the parameters in other white regions, the Julia set is a Sierpi\'{n}ski Curve, and these regions are called \textbf {Sierpi\'{n}ski holes}. These definitions are the same for the more general family $F_{\lambda}: \mathbb{C}\rightarrow\mathbb{C}$, where $F_{\lambda}(z) = z^{n} + \dfrac{\lambda}{z^{n}}$, in which $\lambda \in \mathbb{C}$, which we will briefly discuss at the end of this section.

\begin{theorem}\footnotemark[1]
\emph{(R. Devaney \cite{Devaney IV}, 2012)}
For the family $F_{\lambda }(z) = z^{2} + \dfrac{\lambda }{z^{2}}$, in which $\lambda \in \mathbb{C}$: \\ 
(1)If one and hence all $C_{0} \in B_{\lambda }$, then $J$ is a Cantor set; \\
(2)If one and hence all $C_{0} \notin B_{\lambda }$ but $C_{0} \in T_{\lambda }$ , then $J$ is a Cantor set of simple closed curves;\\ 
(3)If $C_{0} \notin B_{\lambda }$ but $C_{0} \in {\bigcup}_{i=1}^{\infty} F^{-i}(B_{\lambda })$ , then $J$ is an S-Curve and hence is homeomorphic to the Sierpi\'{n}ski Curve. 
\qed
\end{theorem}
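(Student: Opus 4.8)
The plan is to treat the three cases by the standard escape-trichotomy strategy, using B\"ottcher coordinates at $\infty$, the Riemann--Hurwitz formula, and, for the last case, Whyburn's topological characterization (Theorem 3.8). First I would record the structural features that make the statement well posed. The map $F_\lambda$ is rational of degree $4$; the point $\infty$ is a superattracting fixed point of local degree $2$, the pole $0$ maps to $\infty$ with local degree $2$, and the four free critical points are the fourth roots $C_0 = \lambda^{1/4}$, all lying on the circle $|z| = |\lambda|^{1/4}$. A short computation yields only two critical values, $\pm 2\lambda^{1/2}$, and the relation $F_\lambda(-z) = F_\lambda(z)$ identifies their images after one further step; hence the entire critical set shares a single forward orbit, which is exactly the content of the phrase ``one and hence all''. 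I would then set up the B\"ottcher coordinate $\varphi$ conjugating $F_\lambda$ to $w \mapsto w^2$ near $\infty$, noting that $\varphi$ extends conformally over $B_\lambda$ until it meets a free critical point; consequently, whenever no $C_0$ lies in $B_\lambda$, the boundary $\partial B_\lambda$ is a simple closed curve and the trap door $T_\lambda$ about $0$ is a Jordan domain, disjoint from $B_\lambda$ and mapped properly onto it.

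For case (1), with $C_0 \in B_\lambda$, the two critical values lie in $B_\lambda$, so I would choose a large topological disk $D$ about $\infty$ inside $B_\lambda$ whose boundary is an equipotential lying beyond both critical values. Then $F_\lambda$ carries no critical value into $V = \widehat{\mathbb{C}} \setminus D$, so $F_\lambda : F_\lambda^{-1}(V) \to V$ is an unbranched degree-$4$ covering of a disk; by the four-fold symmetry of the critical set, $F_\lambda^{-1}(V)$ is four disjoint closed disks, each carried conformally onto $V$. Iterating the pullback, the non-escaping set $\bigcap_n F_\lambda^{-n}(\overline{V})$ is a nested intersection of $4^n$ disjoint disks whose diameters shrink to $0$ by conformal expansion away from the escaping critical orbit; this set is $J$, which is compact, perfect, and totally disconnected, hence a Cantor set, on which $F_\lambda$ is conjugate to the one-sided shift on four symbols. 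For case (2), with $C_0 \in T_\lambda \setminus B_\lambda$, I would follow McMullen's annulus argument: here $B_\lambda$ and $T_\lambda$ are disjoint Jordan domains about $\infty$ and $0$, and $F_\lambda$ maps the fundamental closed annulus bounded by $\partial B_\lambda$ and $\partial T_\lambda$ onto the region exterior to $B_\lambda$. Because the critical points sit in $T_\lambda$, repeated pullback of this annulus, with the moduli tracked through Riemann--Hurwitz, shows that each annulus subdivides into subannuli, and the intersection of the nested families is a Cantor set of disjoint simple closed curves separating $0$ from $\infty$, the asserted structure.

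Case (3) is where the real work lies, and I expect it to be the main obstacle. With $C_0 \notin T_\lambda$ but $C_0 \in \bigcup_i F_\lambda^{-i}(B_\lambda)$, the critical orbit still lands in $B_\lambda$, so $F_\lambda$ is hyperbolic, but only after two or more steps. The plan is to verify the five defining properties of a Sierpi\'{n}ski curve and then invoke Whyburn's theorem (Theorem 3.8). Compactness is immediate; nowhere density holds because the basin of $\infty$ is dense, since hyperbolicity forces every Fatou component to be an iterated preimage of $B_\lambda$; and local connectivity follows from expansion of $F_\lambda$ on $J$. The genuinely delicate points are that $J$ be connected and that the complementary Fatou components be bounded by pairwise disjoint simple closed curves. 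Since no $C_0$ lies in $B_\lambda$, both $B_\lambda$ and the trap door $T_\lambda$ are Jordan domains, and I would propagate this by induction through all of their preimages, using Riemann--Hurwitz at each stage to confirm that every preimage component is again a Jordan domain and that the hypothesis ``$C_0$ reaches $B_\lambda$ only after at least two steps, and never lands on a component boundary'' prevents any two boundary curves from meeting.

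Establishing that last disjointness, namely ruling out the tangency of closures that a critical point lying on a component boundary would create, is the crux of the whole argument; it is precisely the obstruction that fails in cases (1) and (2) and is quarantined here by the two-step condition on the critical orbit. Once it is in place, all five properties hold, so $J$ is an S-curve, and Theorem 3.8 identifies it with the Sierpi\'{n}ski curve.
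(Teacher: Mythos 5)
You should first be aware that the paper does not prove this theorem at all: it is quoted from Devaney \cite{Devaney IV} and stamped with a QED box, so your proposal can only be measured against the known argument (Devaney--Look--Uminsky's escape trichotomy) rather than against anything in the text. Your structural setup is correct (degree $4$, four free critical points $\lambda^{1/4}$, exactly two critical values $\pm 2\lambda^{1/2}$, and the symmetry $F_\lambda(-z)=F_\lambda(z)$ justifying ``one and hence all''), and your sketches of case (1) (pull back an unbranched degree-$4$ covering of a disk, shrink diameters by contraction of inverse branches, conjugate to the one-sided shift on four symbols) and of case (3) (verify the Sierpi\'{n}ski-curve properties and invoke Whyburn's Theorem 3.8, with the pairwise disjointness of boundary curves correctly flagged as the crux) follow that standard template.

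The genuine gap is case (2). For this particular family, with $n=d=2$, McMullen's annulus argument cannot be carried out: it requires $1/n+1/d<1$, and here $1/2+1/2=1$. In fact, the very Riemann--Hurwitz and modulus bookkeeping you propose to ``track'' proves that the hypothesis of case (2) is never satisfied. If all four critical points lay in $T_\lambda$, then, since $C_0\notin B_\lambda$ forces $\deg\left(F_\lambda|_{B_\lambda}\right)=2$ and the total degree is $4$, we would have $\deg\left(F_\lambda|_{T_\lambda}\right)=2$; but the critical value $2\lambda^{1/2}\in B_\lambda$ would then have the two critical preimages $\pm\lambda^{1/4}$ inside $T_\lambda$, hence at least four preimages counted with multiplicity in $T_\lambda$ alone --- a contradiction. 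Equivalently, in the annulus picture the two essential subannuli of the fundamental annulus $A$ would each have modulus $\tfrac{1}{2}\,\mathrm{mod}(A)$, leaving no room for the open preimage of $T_\lambda$ that must separate them. So case (2) is vacuously true for $z^{2}+\lambda/z^{2}$: there is no McMullen domain when $n=2$, which is precisely the headline of the cited reference (``why the case $n=2$ is crazy'') and of the paper's own Remark immediately preceding the theorem. Your proposal instead treats the configuration as realizable and asserts that the Cantor-of-circles construction succeeds; that is the one step of your plan that would fail, and the correct disposition of case (2) is to prove its hypothesis empty rather than to build the curves.
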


\renewcommand{\thefootnote}{\fnsymbol{footnote}}
\footnotetext[1]{\emph {More discussion about $F_{\lambda }(z) = z^{2} + \dfrac{\lambda }{z^{2}}$, in which $\lambda \in \mathbb{R}$, is in \textbf {Section 3.5}}.}

\begin{remark}
\item According to the the theorem that the any two S-Curves are homeomorphic, we know that for this family, any two Julia sets corresponding to an eventually escaping critical orbit are homeomorphic.
\end{remark}

\item Now we review some results for a more general family $F_{\lambda}: \mathbb{C}\rightarrow\mathbb{C}$, where $F_{\lambda}(z) = z^{n} + \dfrac{\lambda}{z^{n}}$, in which $n \in \mathbb{N}$ and $n \geq 3$, $\lambda \in \mathbb{C}$. Similar to the case $n = 2$, for this function in the families when $n \geq 3$, there are $2n$ critical points $C_{0} = {\lambda }^{1/2n} $ besides 0 and $\infty $, and $2n$ prepoles given by $P_{p} = (-\lambda )^{1/2n}$. The critical points and prepoles are symmetrically arranged due to the following equality holds for any primitive $2n$-th root $\omega $ of unity; i.e. $\omega ^{2n}=1$:
\begin{equation*}
F_{\lambda }(\omega z) = {\omega }^{n} F_{\lambda } (z) = - F_{\lambda } (z).
\end{equation*}   
According to this equality, we can show that $F_{\bar \lambda } (\bar z)= \overline {F_{\lambda } (z)}$, which implies that $J(F_{\lambda })$ is homeomorphic to $J(F_{\bar {\lambda }})$. This allows us to  simplify our discussion by restricting to the case where $Im(\lambda ) \geq 0$. 

\item However, the escape theorem for the cases $n \geq 3$, which is stated in the following theorem, is different from that of $n = 2$:
\begin{theorem}
\emph{(R. Devaney \cite{Devaney IV}, 2012)}
For the family $F_{\lambda}: \mathbb{C}\rightarrow\mathbb{C}$, where $F_{\lambda}(z) = z^{n} + \dfrac{\lambda}{z^{n}}$, in which $n \in \mathbb{N}$ and $n \geq 3$, $\lambda \in \mathbb{C}$: \\ 
(1)If one and hence all $C_{0} \in B_{\lambda }$, then $J$ is a Contor set; \\
(2)If one and hence all $C_{0} \notin B_{\lambda }$ but $C_{0} \in T_{\lambda }$ , then $J$ is a Cantor set of simple closed curves;\\ 
(3)If $C_{0} \notin B_{\lambda }$ and $C_{0} \notin T_{\lambda }$, then $J$ is a connected set;\\
(4)If $C_{0} \notin B_{\lambda }$ and $C_{0} \notin T_{\lambda }$, but $C_{0} \in \left( {\bigcup}_{i=2}^{\infty} F^{-i}(B_{\lambda })\right) \setminus T_{\lambda } $ , then $J$ is an S-Curve and hence is homeomorphic to the Sierpi\'{n}ski Curve.  
\qed
\end{theorem}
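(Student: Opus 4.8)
The plan is to organize the proof around the \emph{escape time} of the free critical orbit, using the $2n$-fold symmetry $F_{\lambda}(\omega z) = \omega^{n}F_{\lambda}(z) = -F_{\lambda}(z)$ (with $\omega = e^{\pi i/n}$) to collapse all $2n$ critical points $C_{0} = \lambda^{1/2n}\omega^{k}$ onto a single orbit, which is precisely what legitimizes the phrase ``one and hence all.'' First I would set up the common infrastructure shared by all four cases: an escape estimate analogous to the one proved earlier for the $m=1$ family, showing that $\infty$ is superattracting of local degree $n$ with immediate basin $B_{\lambda}$ on which $F_{\lambda}$ is $n$-to-$1$, and that the pole of order $n$ at the origin produces the trap door $T_{\lambda} = F_{\lambda}^{-1}(B_{\lambda}) \setminus B_{\lambda}$, a disk mapped $n$-to-$1$ onto $B_{\lambda}$. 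The structural fact that distinguishes $n \geq 3$ from $n = 2$, and which I would establish next, is that the critical points and prepoles all lie on the circle $|z| = |\lambda|^{1/2n}$, which sits strictly between $\partial B_{\lambda}$ and $\partial T_{\lambda}$; hence for $n \geq 3$ the trap door is genuinely disjoint from $B_{\lambda}$ and there is a well-defined \emph{fundamental annulus} bounded by $\partial B_{\lambda}$ and $\partial T_{\lambda}$.

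For part (1), if $C_{0} \in B_{\lambda}$ then every critical point escapes immediately, so $F_{\lambda}$ is hyperbolic and the non-escaping set is the nested intersection of disjoint closed regions each mapped over the others; a symbolic-dynamics argument in the spirit of the Cantor-set alternative of the escape dichotomy for $z^{2}+c$ then shows $J$ is a Cantor set. For part (2), when $C_{0} \in T_{\lambda}$ but $C_{0} \notin B_{\lambda}$, the critical values lie in $B_{\lambda}$ while the critical points sit inside the trap door, so the fundamental annulus contains no critical points; by the Riemann--Hurwitz formula each successive preimage of this annulus is again an annulus, and controlling the moduli of these nested annuli (McMullen's argument) shows that the complement of $B_{\lambda} \cup T_{\lambda}$ together with all preimage annuli is a Cantor set of simple closed curves.

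Part (3) is the connectivity assertion: when $C_{0} \notin B_{\lambda}$ and $C_{0} \notin T_{\lambda}$, neither a critical point nor a critical value lies in $B_{\lambda}$, so $F_{\lambda} : B_{\lambda} \to B_{\lambda}$ is unbranched away from $\infty$ and $B_{\lambda}$ is simply connected; by the standard criterion that the Julia set of a rational map is connected exactly when the immediate superattracting basin is simply connected, $J$ is connected. Part (4) refines this under the extra hypothesis that $C_{0}$ reaches $B_{\lambda}$ only after at least two iterations. Here I would verify directly the five defining properties of a Sierpi\'{n}ski curve and then invoke Whyburn's theorem stated above: connectivity and local connectivity follow from hyperbolicity (all critical orbits escape) combined with part (3); nowhere-density holds because the Fatou set $B_{\lambda} \cup \bigcup_{j} F_{\lambda}^{-j}(B_{\lambda})$ is dense; and each Fatou component is an eventual preimage of the Jordan domain $B_{\lambda}$ along a branch carrying no critical point on its boundary, hence is itself a Jordan domain.

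The hard part will be the remaining property in part (4): that the closures of distinct Fatou components are pairwise disjoint, equivalently that no two of the Jordan boundary curves touch. This is exactly where the escape-time-$\geq 2$ hypothesis is indispensable, since it forces the critical points into the interiors of Fatou components rather than onto $J$, so that the preimages of $\partial B_{\lambda}$ remain disjoint simple closed curves at every level instead of pinching together at a critical value. Carrying out this branched-covering bookkeeping uniformly across all preimage levels, together with the parallel modulus estimate needed in part (2), is the technical heart of the argument; once disjointness is secured, Whyburn's theorem immediately identifies $J$ with the Sierpi\'{n}ski carpet.
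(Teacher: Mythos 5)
There is nothing in the paper to compare your proposal against: this theorem is quoted from Devaney \cite{Devaney IV} and stated with a terminal proof box, like the other escape theorems in Section 3.4, so it is never proved internally. Measured against the literature you are reconstructing (the escape trichotomy of Devaney and his collaborators, McMullen's Cantor-circle theorem, and Whyburn's characterization \cite{Whyburn}), your overall architecture is the right one: the $2n$-fold symmetry to justify ``one and hence all,'' symbolic dynamics for case (1), a covering/modulus argument for case (2), and verification of the Sierpi\'{n}ski properties plus Whyburn for case (4). But two of your steps have genuine gaps. The clearest is part (3): the ``standard criterion'' you invoke is false. The Julia set of a rational map is connected if and only if \emph{every} Fatou component is simply connected, not merely the immediate basin of $\infty$. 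Case (2) of this very theorem refutes your version: in the Cantor-circle configuration $B_{\lambda}$ contains no finite critical points, so the same Riemann--Hurwitz computation you use in part (3) shows $B_{\lambda}$ is simply connected (it maps to itself with degree $n$, branched only at $\infty$), and yet $J$ is a Cantor set of simple closed curves, which is disconnected into uncountably many circle components. The disconnection is produced by the annular Fatou component of $F_{\lambda}^{-1}(T_{\lambda})$ that swallows the $2n$ critical points --- exactly the kind of component your criterion ignores. A correct proof of (3) must run the Riemann--Hurwitz bookkeeping over \emph{all} Fatou components, including, in the subcase that is case (4), the preimage components containing critical points.

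The second gap is in part (2), where the configuration you set up is not McMullen's and is in fact impossible. You place the $2n$ critical points inside the trap door and correctly deduce that the critical values then lie in $B_{\lambda}$; but in that situation $T_{\lambda}$ would map onto the simply connected $B_{\lambda}$ with degree $n$ while containing $2n$ simple critical points, so Riemann--Hurwitz gives $\chi(T_{\lambda}) = n - 2n = -n$, and $T_{\lambda}$ could not be the disk bounding your ``fundamental annulus.'' In the genuine Cantor-circle configuration the inclusions go the other way: the critical \emph{values} lie in $T_{\lambda}$, and the critical points lie in the annular component of $F_{\lambda}^{-1}(T_{\lambda})$ sitting \emph{inside} the fundamental annulus --- contrary to your claim that this annulus is critical-point free. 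What makes each preimage of the fundamental annulus an unbranched cover (hence an annulus) is the absence of critical \emph{values} there, not of critical points. The root cause is that the paper's transcription replaces the critical values of Devaney's theorem by the critical points $C_{0}$, which renders hypothesis (2) vacuous as literally stated; your write-up inherits this confusion, and before McMullen's argument can be run you would need to restate the hypothesis in terms of critical values (equivalently, critical points in $F_{\lambda}^{-1}(T_{\lambda})$), after which the modulus estimate and the nesting argument proceed as you describe.
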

In this theorem, we note that the third case does not appear in the theorem for $n = 2$ (actually not true for $n = 1$ either). Besides, there are some other differences between the dynamics of the cases when $n = 2$ and $n > 2$; two typical ones are (1) there exits a McMullen domain whenever $n > 2$; (2) the Julia set does not converge to the unit disk as $\lambda$ approaches $\infty $ \cite{Morabito}.

\begin{remark}
\item Actually, this theorem is also true for a more general family $F_{\lambda}: \mathbb{C}\rightarrow\mathbb{C}$, where $F_{\lambda}(z) = z^{n} + \dfrac{\lambda}{z^{d}}$, in which $n,d \in \mathbb{N}$, $n,d \geq 2$ (but n,d are not both equal to 2), $\lambda \in \mathbb{C}$. See reference \cite {Devaney IV} for more theorems about these more general case.  
\end{remark}

\begin{figure}[h]
\centering
\centerline{
\includegraphics[scale=0.4]{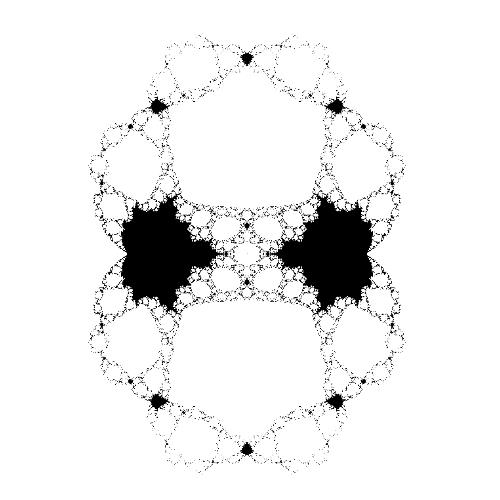}
\includegraphics[scale=0.4]{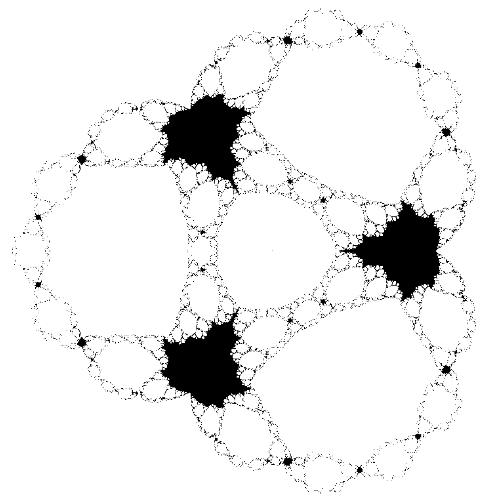}
\includegraphics[scale=0.4]{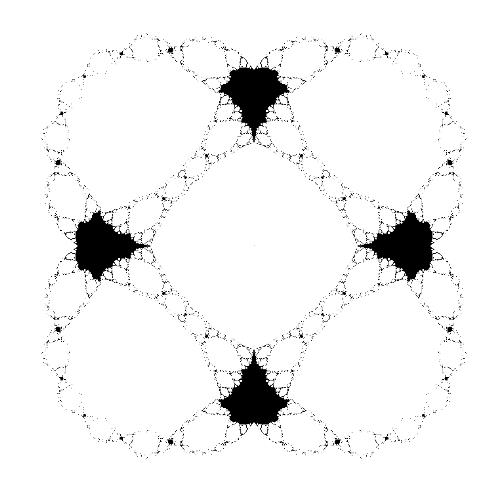}}
\centerline{
\includegraphics[scale=0.4]{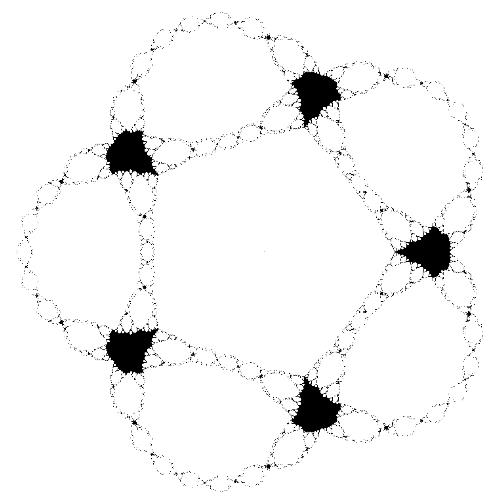}
\includegraphics[scale=0.4]{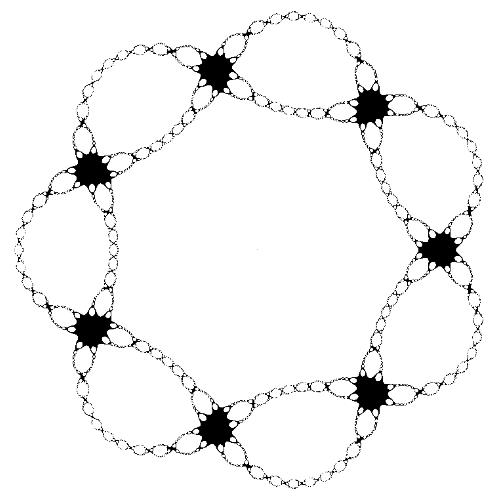}
\includegraphics[scale=0.4]{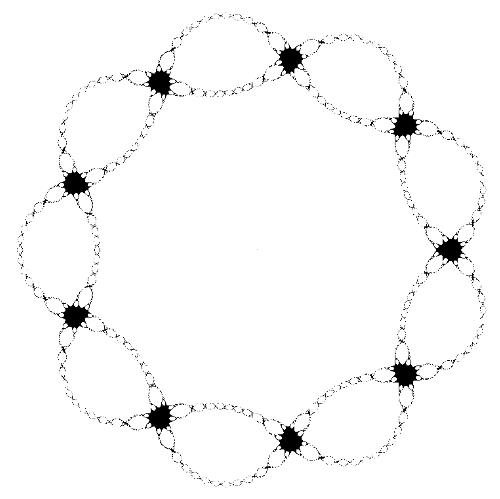}}
\caption{\small The Escape Figures of the Parameter Planes of Several Functions in the Family $F_{\lambda}(z) = z^{n} + \dfrac{\lambda}{z^{n}}$: (1)Upper Left: $z^{3} + \dfrac{\lambda}{z^{3}}$; (2)Upper Middle: $z^{4} + \dfrac{\lambda}{z^{4}}$; (3)Upper Right: $z^{5} + \dfrac{\lambda}{z^{5}}$; (4)Lower Left: $z^{6} + \dfrac{\lambda}{z^{6}}$; (5)Lower Middle: $z^{8} + \dfrac{\lambda}{z^{8}}$; (6)Lower Right: $z^{10} + \dfrac{\lambda}{z^{10}}$.}
\end{figure}

\item Now we consider an interesting result about the convergence of the Julia set $J(F_{\lambda })$:
\begin{theorem}
Let $\epsilon > 0$ and let $B_{\epsilon }(z)$ denote a disk centred at $z$ with radius $\epsilon $. Then there exists $\gamma > 0$ such that, for any $\lambda $ satisfying $0 < |\lambda | \leq \gamma $, $J(F_{\lambda }) \cap B_{\epsilon }(z) \neq \emptyset $ for all $z \in \mathbb{D}$, where $\mathbb{D}$ is the unit disk.  
\qed
\end{theorem}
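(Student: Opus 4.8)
The plan is to follow the architecture of the proof of Theorem 3.5 (Devaney--Morabito), adapting the scaling and the symmetry group to the family $F_{\lambda}(z) = z^{2} + \lambda/z^{2}$. Two symmetries drive the argument: since $F_{\lambda}(iz) = -F_{\lambda}(z)$ and $F_{\lambda}(-z) = F_{\lambda}(z)$, we get $F_{\lambda}^{2}(iz) = F_{\lambda}^{2}(z)$, so $J(F_{\lambda})$ is invariant under the quarter-turn $z \mapsto iz$; and $F_{\bar{\lambda}}(\bar{z}) = \overline{F_{\lambda}(z)}$ gives a reflection symmetry. Only the density direction is asserted here, so I need only show that $J(F_{\lambda})$ meets every small ball around a point of $\mathbb{D}$ once $|\lambda|$ is small.

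I would first settle the point $p = 0$. For $|\lambda| < 1/16$, on the circle $|z| = |\lambda|^{1/4}$ one has
\begin{equation*}
|F_{\lambda}(z)| \leq |z|^{2} + \frac{|\lambda|}{|z|^{2}} = |\lambda|^{1/2} + |\lambda|^{1/2} = 2|\lambda|^{1/2} < |\lambda|^{1/4} = |z|,
\end{equation*}
so this circle is mapped strictly inside itself. Hence the boundary $\partial T_{\lambda}$ of the trap door lies inside it, and since $\partial T_{\lambda} \subset J(F_{\lambda})$ we conclude $\partial T_{\lambda} \to 0$ as $\lambda \to 0$. Thus for $|\lambda|$ small there are Julia points inside $B_{\epsilon}(0)$.

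For $p \neq 0$ I would argue by contradiction, exactly as in Theorem 3.5. If the statement fails, there exist $\epsilon > 0$, a sequence $\lambda_{j} \to 0$, and points $z_{j} \in \mathbb{D}$ with $B_{\epsilon}(z_{j}) \cap J(F_{\lambda_{j}}) = \emptyset$; passing to a subsequence (Heine--Borel) we may take $z_{j} \to p$, and by the previous step $p \neq 0$, while $B_{\epsilon}(p)$ is eventually contained in the Fatou set. On the circle $\mathbb{G} = \{|z| = |p|\}$ choose the arc $\zeta = \mathbb{G} \cap B_{\epsilon}(p)$ of length $\ell > 0$ and fix $m$ with $2^{m}\ell > 2\pi$. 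Because $p \neq 0$, on any annulus bounded away from $0$ and $\infty$ we have $F_{\lambda}^{j}(z) \to z^{2^{j}}$ uniformly as $\lambda \to 0$, and since $z \mapsto z^{2^{j}}$ multiplies arguments by $2^{j}$, the image $F_{\lambda_{j}}^{m}(\zeta)$ wraps at least once around the origin for all large $j$.

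This wrapping forces a Fatou component $Q \supset \zeta$ whose forward orbit contains an annulus surrounding the origin. By the No-Wandering-Domain Theorem (Theorem 3.6), $Q$ eventually lands on a periodic component $U$. The quarter-turn and reflection symmetries keep the orbit of $\zeta$ on the four invariant symmetry lines at every stage, which excludes $U$ being an attracting or parabolic basin of a finite cycle, a Siegel disk, or a Herman ring; hence $U = B_{\lambda}$. Then $F_{\lambda}^{-1}(U) = T_{\lambda}$ and $V = F_{\lambda}^{-2}(U)$ is a non-simply-connected Fatou component encircling the origin; by Riemann--Hurwitz, with the critical points $\lambda^{1/4}$ supplying the branching, $V$ is an annulus mapped onto $T_{\lambda}$. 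Splitting the annulus $O$ between $B_{\lambda}$ and $T_{\lambda}$ by $V$ into an inner $O_{1}$ and an outer $O_{2}$ and comparing moduli gives $\mathrm{mod}\,O = \mathrm{mod}\,O_{1}$, forcing $O_{2}$ to degenerate -- a contradiction. Hence $B_{\epsilon}(z) \cap J(F_{\lambda}) \neq \emptyset$ for every $z \in \mathbb{D}$. I expect the hard part to be the wrapping step together with the classification of $U$: one must quantify, uniformly in the base point $p$, how small $|\lambda|$ must be so that $F_{\lambda}^{m}$ genuinely carries $\zeta$ once around the origin while remaining in the regime $F_{\lambda}^{j} \approx z^{2^{j}}$, and then confirm that the annular component produced can only be a preimage of $B_{\lambda}$.
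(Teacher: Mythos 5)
Your proposal shares the paper's setup (contradiction, compactness and a convergent subsequence $z_{j}\to p$, the arc $\zeta$ on the circle of radius $|p|$, the choice of $k$ with $2^{k}\ell > 2\pi$, and the approximation $F_{\lambda}^{j}\approx z^{2^{j}}$ away from the origin), but it departs from the paper at the decisive step, and that is where it has a genuine gap. The paper finishes in one stroke: the wrapped curve $F_{\lambda}^{k}(\zeta)$ surrounds the origin, hence must cross the Cantor necklace, which lies inside $J(F_{\lambda})$; backward invariance of the Julia set then places a point of $J(F_{\lambda})$ in $B_{\epsilon}(p)$, a contradiction. You instead transplant the Sullivan/classification/Riemann--Hurwitz/modulus machinery from the proof of Theorem 3.5, and your exclusion of attracting and parabolic basins, Siegel disks and Herman rings rests on ``the four invariant symmetry lines.'' For $F_{\lambda}(z) = z^{2} + \lambda/z^{2}$ those lines are forward-invariant only when $\lambda$ is real: writing $\lambda = |\lambda|e^{i\theta}$, a ray $e^{i\alpha}\mathbb{R}^{+}$ is carried into a line through $0$ only if $4\alpha \equiv \theta \pmod{\pi}$, and its image then lies on the line of direction $2\alpha \equiv \theta/2 + k\pi/2$, which belongs to the same family of lines only when $\theta \equiv 0 \pmod{\pi}$. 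Likewise $F_{\bar{\lambda}}(\bar{z}) = \overline{F_{\lambda}(z)}$ is a symmetry of a single map only for real $\lambda$. Since the theorem quantifies over \emph{all} $\lambda$ with $0 < |\lambda| \leq \gamma$, your contradiction sequence $\lambda_{j}\to 0$ may consist entirely of non-real parameters, and then the step ``hence $U = B_{\lambda}$'' is unsupported. This is precisely the situation the paper's Cantor-necklace argument is designed to handle (the necklace exists for all small complex $\lambda$, with no symmetry hypothesis); note that Theorem 3.5, whose proof you are imitating, is only asserted along the dividing ray $\mathrm{Arg}(\lambda) = \pi$, i.e.\ for real $\lambda$, exactly because its proof leans on the line symmetry.

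Two smaller points. First, for $m=2$ the pole has order $2$, so $F_{\lambda}$ maps the inner annulus $O_{1}$ onto $O$ with degree $2$, not $1$; the correct count is $\mathrm{mod}\,O_{1} = \mathrm{mod}\,O_{2} = \tfrac{1}{2}\,\mathrm{mod}\,O$, and the contradiction comes from the Gr\"otzsch inequality $\mathrm{mod}\,O \geq \mathrm{mod}\,O_{1} + \mathrm{mod}\,V + \mathrm{mod}\,O_{2} = \mathrm{mod}\,O + \mathrm{mod}\,V > \mathrm{mod}\,O$; your statement $\mathrm{mod}\,O = \mathrm{mod}\,O_{1}$ is the $m=1$ computation carried over unchanged. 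Second, on the credit side, your explicit treatment of $p=0$ --- the circle $|z| = |\lambda|^{1/4}$ maps strictly inside itself when $|\lambda| < 1/16$, so $\partial T_{\lambda}\subset J(F_{\lambda})$ shrinks to the origin --- uses the right $m=2$ scaling and is actually more careful than the paper's own proof, which draws the circle of radius $|p|$ without ever ruling out $p = 0$.
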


\begin{proof}
Let us prove this theorem by contradiction. We assume for any given $\epsilon > 0$, there exist a sequence of parameters $\{\lambda _{i} \}_{i = 0}^{\infty }$ which converges to 0, and a sequence $( z_{i} ) _{i = 0}^{\infty }$ in which $z_{i} \in \mathbb{D}$ for all $i \in \mathbb{N}$, such that $J(F_{\lambda _{i}}) \cap B_{\epsilon }(z_{i}) = \emptyset $ for all $i \in \mathbb{N}$. Since the unit disk $\mathbb{D}$ is compact, then there exist a subsequence $(z_{j}) _{j = 0}^{\infty }$ that converges to some point $p \in \mathbb{D}$. Then for each parameter in the corresponding subsequence, $J(F_{\lambda _{j}}) \cap B_{\epsilon }(p) = \emptyset $. Let $\mathbb{G}$ denote a circle centred at 0 with radius $p$, then $\mathbb{G} \cap B_{\epsilon }(z_{i}) \neq \emptyset $, and denote the minor arc between their two intersection points as $\zeta $ and its length as $\ell $. Now we choose $k$ so that $2^{k}\ell > 2\pi $. Since the sequence $\lim_{j \to +\infty} {\lambda _{j}} = 0$, thus when $z$ lies outside the circle outside the circle centred at the origin with radius $\dfrac{|p|}{2}$, we can choose a sufficiently large j such that $\left| F_{\lambda _{j}}^{m}(z) - z^{2^{m}} \right| $ is extremely small. Thus, the argument of the curve $F_{c}(\zeta )$ increases by $2\pi $ approximately, and therefore the curve $F_{c}^{k}(\zeta )$ wraps around the origin at least once. Hence this curve must meet the Cantor necklace in the dynamical plane. However, the Cantor necklaces are always located in a subset of the Julia set, which implies that the curve $F_{c}^{k}(\zeta )$ must intersect with the Julia set $J(F_{\lambda _{j}})$. Since the Julia set is backward invariant (that is, $F_{\lambda }^{-1} \left( J(F_{\lambda _{j}}) \right) \subset J(F_{\lambda _{j}})$), then we know that $J(F_{\lambda _{j}}) \cap B_{\epsilon }(p) \neq \emptyset $, which is a contradiction, and therefore the theorem is now proved.
\end{proof}

\item At the end of this section, we want to mention the dynamics when the nonholomorphic singular perturbation is introduced in the case of $m=2$; i.e. $G_{\beta}: \mathbb{C}\rightarrow\mathbb{C}$, where $G_{\beta}(z) = z^{2} + \dfrac{\beta}{\bar z^{2}}$. Similar to the case under holomorphic singular perturbation, this family is the most complicated one in nonholomorphic singular perturbation as well. Both this family and a more general form (the radial symmetry case), $G_{\beta, m}(z) = z^{m} + \dfrac{\beta}{\bar {z}^{m}}$, had been well studied by B. Peckham and B. Bozyk \cite{Peckham II}. 

\clearpage
\subsection{Singular Perturbations of Quadratic Family when $m \geq 3$ and $m \in \mathbb{N}$}
\item Similar to the previous cases, the family when $m \geq 3$ have $(2 + m)$-fold symmetry, and let $\omega = sin \left( \dfrac{2\pi }{2+m} \right) + i cos \left( \dfrac{2\pi }{2+m} \right) $ be the $(2 + m)$-th root of the unity, the following equality holds:
\begin{equation*}
F_{\lambda , m}(\omega z) = {\omega }^{2} F_{\lambda , m}(z).
\end{equation*}
Furthermore, one can easily show that the saddle-node bifurcation value $\lambda $ for any $m \in \mathbb{N}$ (including the cases $m=1,2$) takes the following form:
\begin{equation*}
\lambda_{0} = \dfrac{\left( {1 + m}\right) ^{1 + m}}{\left( {2 + m}\right) ^{2 + m}}.
\end{equation*}
Meanwhile, the first derivative (for any $m \in \mathbb{N}$), the first derivative is 
\begin{equation*}
F^{'}_{\lambda , m}(z) = 2z - (m\lambda) z^{-(m+1)} = (2 + m)z - \dfrac{m}{z} F_{\lambda , m}(z).
\end{equation*}
from which one can derive that the critical points are $ z_{c} = \left( \dfrac{m\lambda }{2}\right)^{1/(m+2)}$. 
Therefore, when z is a fixed point, it should satisfy 
\begin{equation*}
F^{'}_{\lambda , m}(z) = (2 + m)z - m. 
\end{equation*}

\begin{figure}[!htb]
\centering
\centerline{
\includegraphics[scale=0.35]{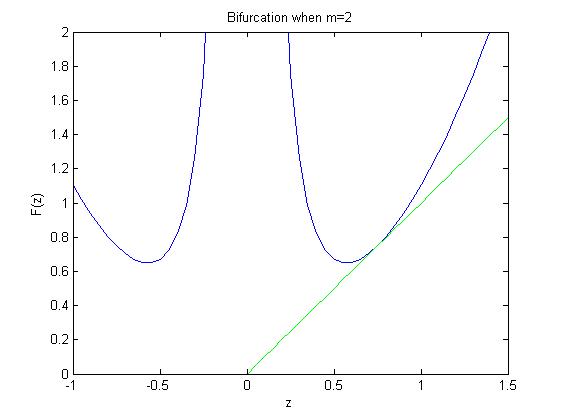}
\includegraphics[scale=0.35]{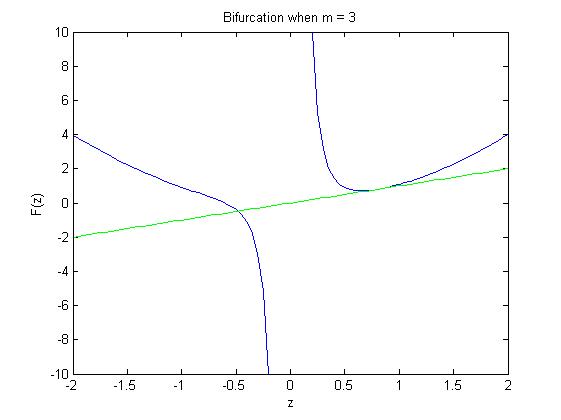}
\includegraphics[scale=0.35]{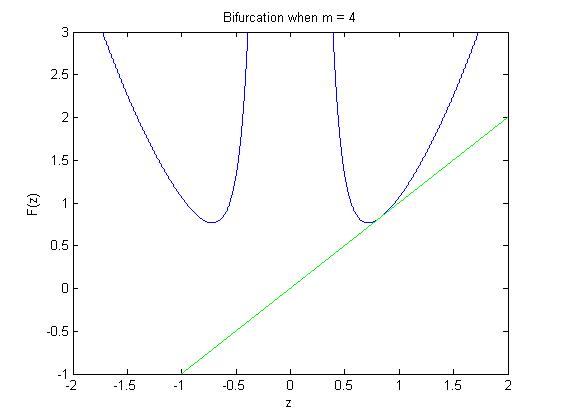}}
\centerline{
\includegraphics[scale=0.35]{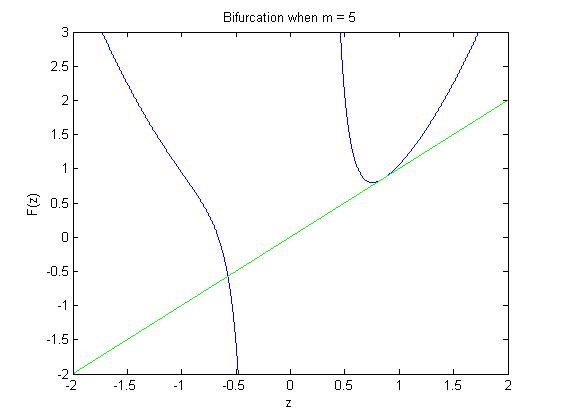}
\includegraphics[scale=0.35]{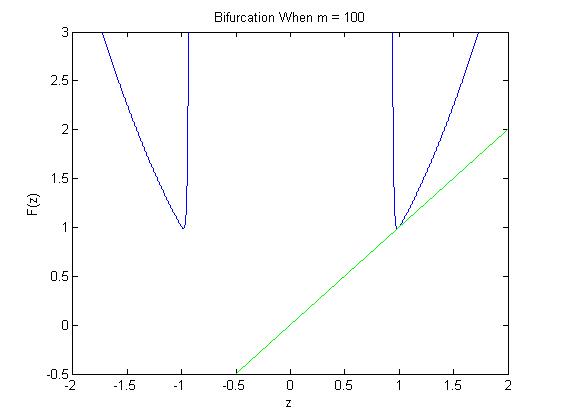}
\includegraphics[scale=0.35]{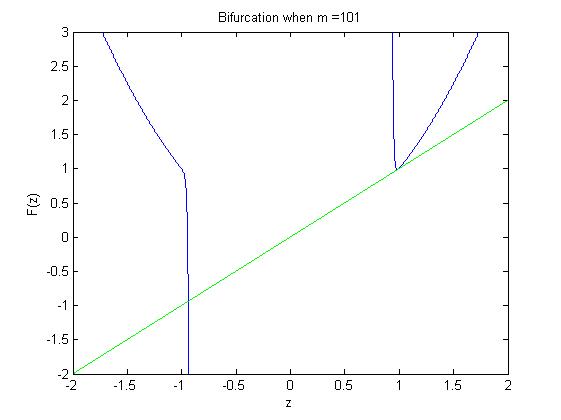}}
\caption{\small The Saddle-Node Bifurcations of $F_{\lambda}: \mathbb{C}\rightarrow\mathbb{C}$, where $F_{\lambda , m}(z) = z^{2} + \dfrac{\lambda}{z^{m}}$, in which $\lambda \in \mathbb{R}$ when $m = 2,3,4,5,100,101$.}
\end{figure}

\item We start our discussion with a relatively simple case when $\lambda \in \mathbb{R}$ and $\lambda \geq 0$. Let us denote the saddle node in this case as $\lambda _{0}$. According to Sturm's Theorem, one can show that there are at most two non-negative real fixed points, let us denote them as $z_{1}$ and $z_{2}$, where $z_{1} < z_{2}$. Then it can be readily shown that, for $\lambda \in (0, \lambda _{0})$, $|F^{'}_{\lambda , m}(z_{1})| \leq 1$ (that is, $z_{1}$ is an attracting point) if and only if $m=1$. In the following parts, we will discuss the cases when $m \geq 3$, and the corresponding figures are shown in Fig.13.
 
\item In the previous section, we referred that the dynamics when $m = 2$ on complex plane is the most complicated case. This is still true when reduced to the real line $\mathbb{R}$. Thus, in the following we start with the simpler case $m \geq 3$ and still restrict our discussion into $\lambda \in [0, \lambda _{0}]$, then consider the dynamics when $m = 2$ later.  

\item Let us firstly consider its dynamics when $\lambda $ is close to the two boundary points 0 and $\lambda _{0}$ when $m =3$. Firstly, we denote the positive preimage of $z_{2}$ as $s$ (that is, $F_{\lambda , m}(s) = s^{2} + \dfrac{\lambda}{s^{m}} = z_{2}$). When $\lambda \in [0, \lambda _{c}]$ is close to 0, we have $z_{2} < 1$ for $\lambda > 0$, and then $s^{2} + \dfrac{\lambda }{s^{m}} < 1$, which implies that $\dfrac{\lambda }{s^{m}} < 1$ and therefore $s^{m} < \lambda $. Meanwhile, since 
\begin{equation*}
F_{\lambda }(z_{c}) = \lambda ^ {2/(2+m)} \left( (\dfrac{m^{2}}{4})^{1/(2+m)} + (\dfrac{2^{m}}{m^{m}})^{1/(2+m)} \right),
\end{equation*} 
and $2/(2+m) > 1/m$ (which holds for all 
$m \geq 3$), then $F_{\lambda , m}(z_{c})$ (which is order $\lambda ^{2/(2+m)}$) is less than $s$ (which is at least order $\lambda ^{1/m}$) when for $\lambda $ approaches to 0. Since $F_{\lambda , m}(z_{c})$ decreases in the interval $(0, z_{c}]$, then 
we can conclude that when $\lambda $ is close to 0, the following equality holds:
\begin{equation*}
F_{\lambda ,m}^{2}(z_{c}) > F_{\lambda ,m}(s) = z_{2}. 
\end{equation*}
On the other hand, when $\lambda \in [0, \lambda _{c}]$ is close to $\lambda _{0}$, $z_{c} < z_{1}$ and $F'_{\lambda }(z_{c}) >0 $ for all $z \in [z_{c}, z_{1}]$. Then one can readily show that $z_{c} < F_{\lambda ,m}(z_{c}) < F_{\lambda ,m}^{2}(z_{c}) < z_{1} < z_{2}$, which means that $F_{\lambda ,m}^{2}(z_{c}) < z_{2}$ when $\lambda $ is close to $\lambda _{0}$.
Based on the above discussion, since $F_{\lambda ,m }^{2}(z_{c}) > z_{2}$ when $\lambda $ is close to 0 while $F_{\lambda ,m}^{2}(z_{c}) < z_{2}$ when $\lambda $ when $\lambda $ is close to $\lambda _{0}$, then by the Intermediate Value Theorem, there exists a $\lambda _{iv} \in [0, \lambda _{0}]$ such that $F_{\lambda _{iv}, m}(z_{c}) = z_{2}$. Then, by a theorem proved in \cite{Douady} (see the following remark), we can conclude that a period-doubling bifurcation occurs on $\mathbb{R}^{+}$ when $\lambda $ decreases from $\lambda _{0}$ to $0$.

\begin{remark}
This theorem claims: if there is a parameter value $\lambda $ such that $F_{\lambda }^{2}(z_{c})$ equals the repelling fixed point, then a period doubling bifurcation will occur. See \cite{Douady} for proof. 
\end{remark}

\item Finally let us consider the most complicated case when $m = 2$, in which the critical points are $ z_{c} = \lambda ^{1/4}$, where $\lambda \in \mathbb{R}$ and saddle node bifurcation occurs when $\lambda _{0} = 27/ 256$. Then the value of $z_{c}$ after $k-th$ iterations is  
\begin{equation*}
F_{\lambda , m}^{k}(z_{c}) = (z_{c}^{2} + \dfrac{\lambda }{z_{c}^{2}})^{k} = (2 \lambda ^{1/2})^{k}.
\end{equation*}
Since $F_{\lambda , m}^{k}(z_{c})$ monotonically decreases when $k$ increases, the maximum value on the orbit of the critical point is $F_{\lambda , m}^{k}(z_{1}) = 2 \lambda ^{1/2}$. Therefore, when $m=2$. the critical orbit never escapes the interval $[s, z_{2}]$. And similar to the cases when $m \geq 3$, there is a period doubling bifurcation appears on when $\mathbb{R}$ when $\lambda $ decreases from $\lambda _{0}$ to 0.  

\item At the end of this section, we summarize the three ceases discussed above and conclude the following theorem:
\begin{theorem}
There exist at most two non-negative fixed points $z_{1}$ and $z_{2}$ ($z_{1} < z_{2}$) for the family $F_{\lambda}: \mathbb{C}\rightarrow\mathbb{C}$, where $F_{\lambda , m}(z) = z^{2} + \dfrac{\lambda}{z^{m}}$, in which $\lambda \in \mathbb{R}$, $m \in \mathbb{N}$, and \\
(1)For all $\lambda \in (0, \lambda _{0})$, $z_{1}$ is attracting if and only if m = 1;\\
(2)When $m = 2$, the orbit of the critical point $z_{c}$ never escapes the interval $[s, z_{2}]$, and a period doubling bifurcation occurs on $\mathbb{R}^{+}$ when $\lambda $ decreases from $\lambda _{0}$ to $0$;\\
(3)When $m \geq 3$, for sufficient small $\lambda $, the orbit of the critical point $z_{c}$ will escapes the interval $[s, z_{2}]$, and a period doubling bifurcation occurs on $\mathbb{R}^{+}$ when $\lambda $ decreases from $\lambda _{0}$ to $0$.
\qed
\end{theorem}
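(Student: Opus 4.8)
The plan is to prove the three items as one package, resting on two uniform facts. Clearing denominators in the fixed-point equation $z^2+\lambda z^{-m}=z$ gives $g(z)=z^{m+2}-z^{m+1}+\lambda=0$, and since $g'(z)=z^m\left((m+2)z-(m+1)\right)$ has a single positive zero at $\frac{m+1}{m+2}$, the function $g$ is strictly decreasing then strictly increasing on $(0,\infty)$ with $g(0)=\lambda>0$; hence there are at most two non-negative fixed points (which is also what Sturm's theorem yields), and exactly two, $z_1\in\left(0,\frac{m+1}{m+2}\right)$ and $z_2\in\left(\frac{m+1}{m+2},\infty\right)$, precisely when the positive minimum of $g$ dips below zero, i.e. when $\lambda<\lambda_0=\frac{(m+1)^{m+1}}{(m+2)^{m+2}}$. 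This $\lambda_0$ is exactly where that minimum vanishes, with $z_1=z_2=\frac{m+1}{m+2}$. The second uniform fact is the multiplier identity at a fixed point, $F'_{\lambda,m}(z)=(m+2)z-m$, already recorded above.

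For item (1), since $z_1<\frac{m+1}{m+2}$ one always has $F'_{\lambda,m}(z_1)=(m+2)z_1-m<1$, so $z_1$ is attracting exactly when $F'_{\lambda,m}(z_1)\geq-1$, that is, when $z_1\geq\frac{m-1}{m+2}$. For $m=1$ this threshold is $0$, which holds throughout $\left(0,\frac{2}{3}\right)$, so $z_1$ is attracting for every $\lambda\in(0,\lambda_0)$. For $m\geq2$ the threshold $\frac{m-1}{m+2}$ is strictly positive, while reading off the small root of $g$ gives $z_1\sim\lambda^{1/(m+1)}\to0$ as $\lambda\to0^+$; hence $z_1<\frac{m-1}{m+2}$ for small $\lambda$ and $z_1$ is repelling there, so attraction for \emph{all} $\lambda\in(0,\lambda_0)$ fails. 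This is the claimed equivalence.

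For the trapping statements in (2) and (3), I would introduce the preimage $s\in(0,z_c)$ of $z_2$ on the decreasing branch, so that $F_{\lambda,m}([s,z_2])=[F_{\lambda,m}(z_c),z_2]$ and the interval is forward invariant exactly when $F_{\lambda,m}(z_c)\geq s$. The two quantities have definite orders as $\lambda\to0$: the minimum value is of order $\lambda^{2/(m+2)}$, while $F_{\lambda,m}(s)=z_2\approx1$ forces $s$ to be of order $\lambda^{1/m}$. Comparing exponents, $\frac{2}{m+2}\leq\frac{1}{m}$ if and only if $m\leq2$, so for $m=2$ (where the exponents coincide and the constants $F_{\lambda,2}(z_c)=2\sqrt{\lambda}$ versus $s\sim\sqrt{\lambda}$ tip the balance toward invariance) the orbit of $z_c$ stays in $[s,z_2]$, whereas for $m\geq3$ one gets $F_{\lambda,m}(z_c)<s$ for small $\lambda$, and since $F_{\lambda,m}$ is decreasing on $(0,z_c)$ this yields $F_{\lambda,m}^2(z_c)=F_{\lambda,m}(F_{\lambda,m}(z_c))>F_{\lambda,m}(s)=z_2$, i.e. the orbit escapes the interval.

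The period-doubling assertion shared by (2) and (3) I would obtain by tracking the multiplier $\mu(\lambda)=F'_{\lambda,m}(z_1(\lambda))=(m+2)z_1(\lambda)-m$ as $\lambda$ decreases from $\lambda_0$ to $0$: at $\lambda_0$ one has $z_1=\frac{m+1}{m+2}$ and $\mu=1$, while $z_1\to0$ gives $\mu\to-m$; since $z_1(\lambda)$ is continuous (indeed smooth on $(0,\lambda_0)$ by the implicit function theorem, as $g'(z_1)\neq0$ there) and $-m<-1$ for $m\geq2$, the intermediate value theorem produces $\lambda_\ast\in(0,\lambda_0)$ with $\mu(\lambda_\ast)=-1$, a flip of $z_1$ (for $m=1$ one only gets $\mu\to-1$ in the limit, so no flip occurs in the open interval, consistently with item (1)). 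The main obstacle is the borderline case $m=2$: the detection used for $m\geq3$ in the preceding paragraphs --- producing $\lambda_{iv}$ with $F_{\lambda,m}^2(z_c)=z_2$ and invoking the criterion of \cite{Douady} in the preceding remark --- is unavailable, because we have just shown $F_{\lambda,2}^2(z_c)<z_2$ for all $\lambda\in(0,\lambda_0)$, so the critical orbit never lands on the repelling fixed point. The period doubling for $m=2$ must instead be certified directly from the multiplier crossing $-1$, and to call it a genuine (non-degenerate) flip one still has to verify the transversality condition $\frac{d\mu}{d\lambda}(\lambda_\ast)\neq0$ together with the non-vanishing of the relevant third-order (negative Schwarzian) term; that verification is the one computation I would carry out in full.
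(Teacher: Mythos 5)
Your proposal is correct, and it diverges from the paper's treatment (which is really the running discussion of Section 3.5, summarized into the theorem with a \qed) in ways worth recording. The parts that coincide: your escape/trapping analysis for items (2) and (3) is the paper's own --- comparing the order $\lambda^{2/(m+2)}$ of $F_{\lambda,m}(z_c)$ against the order $\lambda^{1/m}$ of the preimage $s$, concluding invariance for $m=2$ and $F^{2}_{\lambda,m}(z_c)>z_2$ for $m\geq 3$ --- though your version of the $m=2$ trapping is sound where the paper's (the identity $F^{k}_{\lambda,2}(z_c)=(2\lambda^{1/2})^{k}$, which confuses iteration with exponentiation) is not. The genuine differences: you count the non-negative fixed points by elementary calculus on $g(z)=z^{m+2}-z^{m+1}+\lambda$ rather than citing Sturm's theorem, which buys you $\lambda_0=\frac{(m+1)^{m+1}}{(m+2)^{m+2}}$ and the bracketing $z_1<\frac{m+1}{m+2}<z_2$ in one stroke; you actually prove item (1), via the threshold $z_1>\frac{m-1}{m+2}$ together with $z_1\sim\lambda^{1/(m+1)}$, where the paper only asserts ``it can be readily shown''; and, most importantly, your period-doubling mechanism --- the intermediate value theorem applied to the multiplier $\mu(\lambda)=(m+2)z_1(\lambda)-m$, which runs from $1$ at $\lambda_0$ to $-m$ as $\lambda\to 0^{+}$ --- replaces the paper's mechanism of locating $\lambda_{iv}$ with $F^{2}_{\lambda_{iv},m}(z_c)=z_2$ and invoking the criterion of \cite{Douady}. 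Your observation that the latter cannot work for $m=2$ is exactly right: the paper settles that case with ``similar to the cases when $m\geq 3$,'' yet its own trapping claim $F^{2}_{\lambda,2}(z_c)<z_2$ guarantees the Douady criterion is never triggered there, so your multiplier argument is not merely an alternative but the repair the paper's argument needs (and it covers all $m\geq 2$ uniformly). Two small notes: attraction requires the strict inequality $F'_{\lambda,m}(z_1)>-1$, not $\geq -1$; and the transversality you defer is immediate, since $\frac{dz_1}{d\lambda}=-1/g'(z_1)>0$ on $(0,\lambda_0)$ makes $\mu$ strictly monotone in $\lambda$, so only the third-order (Schwarzian-type) non-degeneracy check remains as the computation you promise to carry out.
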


\clearpage

\subsection{Simple Comparison of Holomorphic and Nonholomorphic Singular Perturbations}

\item Before ending this section, we want to show the escape figures of parameter planes (Fig.14) and some escape figures of dynamical planes (Fig.15) of $F_{\lambda}(z) = z^{2} + \dfrac{\lambda}{z}$ and $G_{\beta}(z) = z^{2} + \dfrac{\beta}{\bar z}$ , where $\lambda,\beta \in \mathbb{C}$. One should note that $F_{\lambda}$ is a map from $\mathbb{C}$ to $\mathbb{C}$ while $G_{\beta}$ is a map from $\mathbb{R}^{2}$ to $\mathbb{R}^{2}$, in which the former is a very special case of the latter \cite{Peckham II}. And it is worth to mention that the real case we discussed at the beginning of \textbf {section 3}, i.e. $F_{c}: \mathbb{R}\rightarrow\mathbb{R}$, where $F_{c}(x) = x^{2} + \dfrac{c}{x}$, in which $c \in \mathbb{R}$, is along the spine (real line) of the escape figures of dynamic planes; for parameter planes, however, its dynamics indeed matches the whole spine in the holomorphic case, but only matches the positive parts of the spine in the nonholomorphic case.     

\item In Fig.14 we see that the parameter planes of these two families are entirely different. The left graph in Fig.14 is a \textbf {Pseudo-Mandelbrot set}, in which there exist infinitely many parts that the Mandelbrot set is topologically equivalent to. See \cite{Devaney VIIII} for more detailed discussion about this dynamical plane. The right graph in Fig.14, however, is far away from being well understood. 

\begin{remark}
A Pseudo-Mandelbrot set for the map $F_{\lambda}(z) = z^{2} + \dfrac{\lambda}{z}$ is a collection of $\lambda$-values for which the critical orbits under function $F_{\lambda}(z)$ stay bounded. 
\end{remark}

\item For each $\lambda$ in the holomorphic singular perturbation, there exist three critical points, which are the roots of equation $F_{\lambda}^{'}(z) = 0$. As we proved in \textbf {section 3.3}, these three critical points possess the same dynamics (either stay bounded or go off to infinity), therefore anyone of these critical points will produce the same escape figure. Therefore, the left graph in Fig.14 is called ``the" parameter plane.    

\item In the case of nonholomorphic singular perturbation ($G_{\beta}$), however, the set of critical points is a circle of radius $(|\beta|/2)^{1/3}$ \cite{Peckham II}. On the other hand, the roots of $F_{\lambda}^{'}(z) = 0$ are the three values of $(\lambda /2)^{1/3}$ (when $\lambda \in \mathbb{C}$), and all of these three critical points have the same magnitude $(|\lambda|/2)^{1/3}$. These imply that the three roots of $F_{\lambda}^{'}(z) = 0$ in holomorphic case lie on the critical circle in the nonholomorphic case (except for $\lambda = \beta =0$). There is no surprise that we obtain such a result, because as we pointed out above that the complex plane is a subset of the two-dimensional real plane. Since the critical points on the critical circle do not necessarily all possess the same dynamics, the escape figures of dynamical planes are not unique and definitive. This is the reason why the right graph in Fig.14 is called ``a" parameter plane escape figure in the nonholomorphic case. It is also necessary to point out that the right graph in Fig.14 was plotted by using the positive real point on the critical circle, which is also a critical point for $F_{\lambda}$ when $\lambda$ is a positive real number. This explains why the points on the positive horizontal (real) axes in the two escape figures in Fig.14 share the same dynamics. However, this point is not a critical point for $F_{\lambda}$ when $\lambda$ is a negative real number, so these two escape figures do not necessarily agree on the negative real axes. 

\item So far, we explained some differences in the escape figures of parameter planes under holomorphic and nonholomorhic singular perturbations. However, understanding more details about their dynamics, such as how parameter planes in the nonholomorphic case depend on which critical point is selected, and whether some of the points on the critical circle share the same escape property as the three critical points in the holomorphic case, requires more studies.

\begin{figure}
\begin{tabular}{llll}
& \includegraphics[height=7cm,width=7cm]{parameter1.jpg} & \includegraphics[height=7cm,width=7cm]{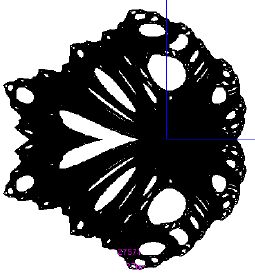}\\
&$F_{\lambda}(z) = z^{2} + \dfrac{\lambda}{z}$ (Holomorphic Singular Perturbation) & $G_{\beta }(z) = z^{2} + \dfrac{\beta }{\bar z}$(Nonholomorphic Singular Perturbation) 
\end{tabular}
\caption{Escape Figures of the Parameter Planes of $F_{\lambda}(z) = z^{2} + \dfrac{\lambda}{z}$ and a Parameter Plane of $G_{\beta}(z) = z^{2} + \dfrac{\beta}{\bar z}$, where $\lambda, \beta \in \mathbb{C}$. (Note the difference between ''the" and ''a" in this caption.)}
\end{figure}

\item Fig.15 shows the escape figures of dynamic planes under three typical values $\lambda$ (or $\beta$): 4/27, -0.327, and -0.507. Since the real axis is invariant under $F_{\lambda}$ when $\lambda \in \mathbb{R}$ and $G_{\beta}$ when $\beta \in \mathbb{R}$, and $F_{\lambda} = G_{\beta}$ when restricted to the $x$-axis, the graphs in the left column and the graphs in the right column should each agree along the $x$-axis. This is reasonably clear when the parameters are 4/27 and -0.507, but not for the nonholomorphic case when the parameter equals -0.327. In the next parts, we are trying to quantitatively interpret the difference in the escape figures under holomorphic versus nonholomorphic singular perturbations when $\lambda = \beta = -0.327$.

\item In the period-three case, the corresponding $\lambda$ (and $\beta$)-value and critical point $z_{0}$ are approximately -0.327 and -0.549241, respectively; and therefore $z_{1} = G_{\beta} (z_{0}) \thickapprox 0.897033$ and $z_{2} = G_{\beta} (z_{1}) \thickapprox 0.440311$. These three points, $z_{0}$, $z_{1}$, and $z_{2}$, which form a period-three cycle, respectively lie in the three black blobs along the spine (the middle-left graph in Fig.15). However, in the middle-right graph in Fig.15 (the case of nonholomorphic singular perturbations), no black blobs appears along the spine, although the three periodic points indeed exits (i.e. at least three black dots should appear). 

\item Now we write $G_{\beta}$ in $x-y$ coordinates
\begin{equation*}
G_{\beta}(x,y) = (x^2 - y^2 + \frac{\beta x}{x^2 + y^2}) + (2xy + \frac{\beta y}{x^2 + y^2})i,
\end{equation*}
in which we denote the real part as $G_{\beta R} = Re\{G_{\beta}(x,y) \} = x^2 - y^2 + \frac{\beta x}{x^2 + y^2}$ and the imaginary part as $G_{\beta I} = Im\{G_{\beta}(x,y) \} = 2xy + \frac{\beta y}{x^2 + y^2}$. Then, one can show that the corresponding Jacobian matrix in $z-\bar{z}$ coordinates is 
\begin{equation*}
\dfrac{\partial (G_{\beta R},G_{\beta I})}{\partial x \partial y} = \left( \begin{array}{cc} 2x - \frac{\beta (x^2 - y^2)}{{(x^2 + y^2)}^2} & -2y - \frac{2\beta xy}{{(x^2 + y^2)}^2} \\ 2y - \frac{2\beta xy}{{(x^2 + y^2)}^2} & 2x + \frac{\beta (x^2 - y^2)}{{(x^2 + y^2)}^2} \end{array} \right).
\end{equation*}
On the $x$-axis, $y=0$ and this Jacobian matrix can be rewritten as 
\begin{equation*}
\dfrac{\partial (G_{\beta R},G_{\beta I})}{\partial x \partial y}|_{y=0} = \left( \begin{array}{cc} 2x - \frac{\beta}{x^2} & 0 \\ 0 & 2x + \frac{\beta}{x^2} \end{array} \right).
\end{equation*}

\item Then, by the chain rule, we can determine the Jacobian matrix of $({G_{\beta R}}^{3},{{G_{\beta I}}^{3}})$ at the critical period-three point:
\begin{equation*}
\begin{aligned}
\dfrac{\partial {(G_{\beta R}}^{3}, {G_{\beta I}}^{3})}{\partial x \partial y} = \dfrac{\partial (G_{\beta R}, G_{\beta I})}{\partial x \partial y} |_{z=z_{2}}\cdot \dfrac{\partial (G_{\beta R}, G_{\beta I})}{\partial x \partial y} |_{z=z_{1}} \cdot \dfrac{\partial (G_{\beta R}, G_{\beta I})}{\partial x \partial y} |_{z=z_{0}} \thickapprox \left( \begin{array}{cc} -0.081916 & 0 \\ 0 & 2.44116 \end{array} \right),
\end{aligned}
\end{equation*}
the eigenvalues of which are approximately 0 and 2.4. 

\item In the $x-y$ coordinates (the coordinates we used in Fig.15, the eigenvalues of the Jacobian matrix can roughly explain the dynamics near the period-three orbit. It is obvious that in period-three case the eigenvector corresponding to the eigenvalue 0 is parallel to the x-axis, which indicates the appearance of the critical point. The other eigenvalue, 2.4 ($|2.4|>1$), with the eigenvector parallel to the y-axis, tells us that the dynamics along this direction is repelling. Therefore, although the period-three cycle is attracting when restricted to the $x$-axis, it becomes a saddle point in the $x-y$ plane. And all points (including those lying in the neighborhood of three periodic points) with a nonzero imaginary part will initially head away from the $x$-axis after iteration. This partially explains why there is no black blobs appear along the spine in the middle-right graph in Fig.15. 

\begin{remark} Actually, if we extend $G_{\beta}$ to $(G_{\beta}, \overline{G_{\beta}})$, where $\overline{G_{\beta}} = {\bar z}^{2} + \dfrac{\bar \beta}{z}$, then we can compute the Jacobian matrix in $z-\bar{z}$ coordinate more easily \cite{Peckham II}. The Jacobian matrix of $G_{\beta}, \overline{G_{\beta}})$ is definitely different from the Jacobian matrix of $(F_{\lambda}, \overline{F_{\lambda}})$, which is obtained by extending $F_{\lambda}$. However, one eigenvalue of zero corresponding to the eigenvector along the $x$-axis must be obtained in both of these two Jacobian matrices, because these two extensions both include the points along the $x$-axis. However, what will happen on the eigenvalues in other directions that are transverse to the $x$-axis depends on the way we extend the map. 
\end{remark}

\item It worth to mention that one can show that the period-four cycle (when $\lambda = \beta = -0.507$) consists of four points: -0.632282 (critical point), 1.201797, 1.022799, and 0.550833; and they respectively lie in the four black blobs along the spine (the lower-left graph in Fig.15). Then, following the same approach, one can readily show that the eigenvalues of the Jacobian matrix for the period-four case (when $\beta = -0.507$) are approximately 4.60841 and 0. Although the eigenvalue corresponding to the eigenvector that is parallel to the $y$-axis is 0, some black blobs that contain these points on the critic orbit still appear in the lower-right graph in Fig.15. This indicates that we still do not completely understand the dynamics in the neighborhoods of the points on the period-four cycle.

\item Finally, it is necessary to point out we only explored some simple phenomena appear in the escape figures under nonholomorphic singular perturbations, even for the period-three and -four cases; more deeper studies are required for a better understanding at these escape figures.

\begin{figure}
\begin{tabular}{llll}
& \includegraphics[height=6cm,width=6cm]{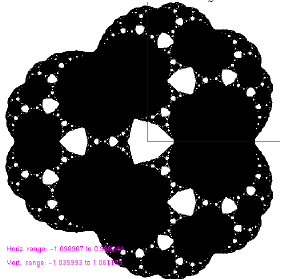} & \includegraphics[height=6cm,width=6cm]{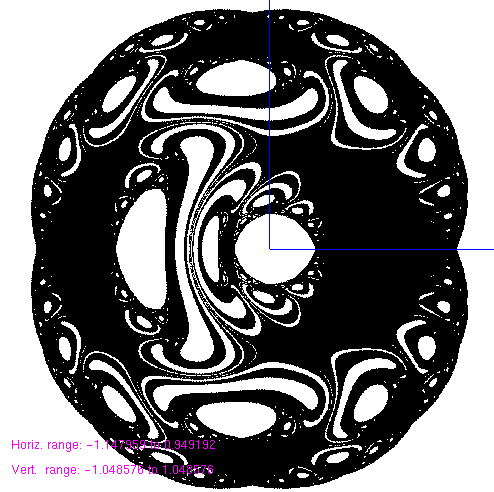}\\
& \small {Dynamic Plane of $F_{\lambda}(z) = z^{2} + \dfrac{4/27}{z}$ (Saddle Node)} & \small {Dynamic Plane of $G_{\beta}(z) = z^{2} + \dfrac{4/27}{\bar z}$(Saddle Node)}\\
& \includegraphics[height=6cm,width=6cm]{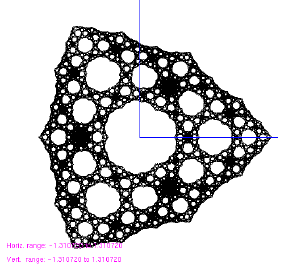} & \includegraphics[height=6cm,width=6cm]{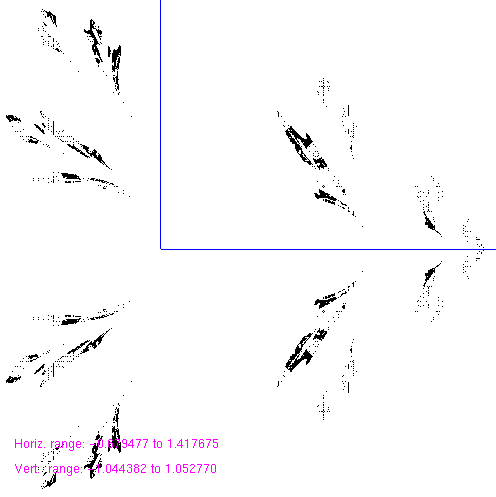}\\
& \small {Dynamic Plane of $F_{\lambda}(z) = z^{2} + \dfrac{-0.327}{z}$ (Period Three)} & \small{Dynamic Plane of $G_{\beta}(z) = z^{2} + \dfrac{-0.327}{\bar z}$(Period Three)} \\
& \includegraphics[height=6cm,width=6cm]{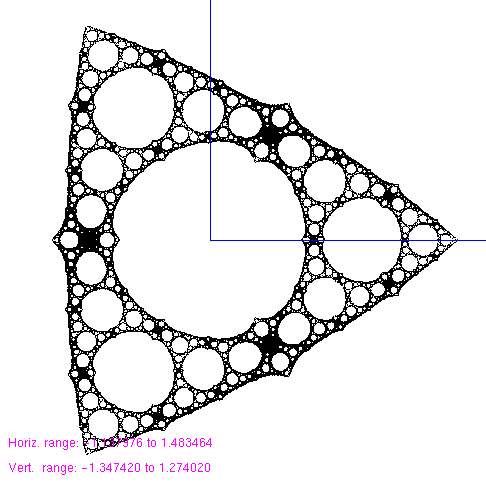} & \includegraphics[height=6cm,width=6cm]{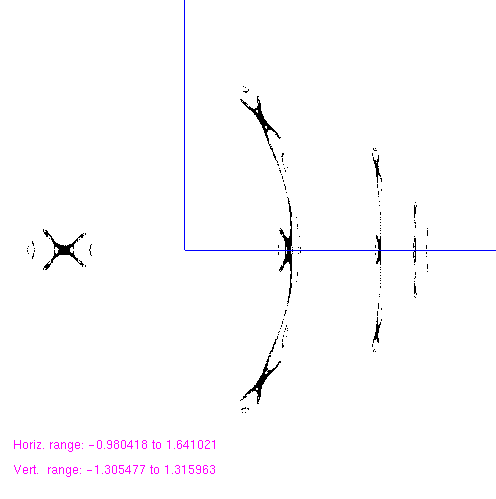}\\
& \small{Dynamic Plane of $F_{\lambda}(z) = z^{2} + \dfrac{-0.507}{z}$ (Period Four)} & \small{Dynamic Plane of $G_{\beta}(z) = z^{2} + \dfrac{-0.507}{\bar z}$(Period Four)} \\
\end{tabular}
\caption{The Escape Figures of Several Typical Dynamic Planes of $F_{\lambda}(z) = z^{2} + \dfrac{\lambda}{z}$ and $G_{\beta}(z) = z^{2} + \dfrac{\beta}{\bar z}$, where $\lambda, \beta \in \mathbb{C}$}
\end{figure}

\clearpage

\section{Summary}
\item In this paper, we summarized some older and some more recent studies on both real and complex quadratic families, and researched some details of the quadratic family under nonholomorphic singular perturbation with the form of $G_{\beta}:\mathbb{R}^{2}\rightarrow\mathbb{R}^{2}$, where $G_{\beta }(z) = z^{2} + \dfrac{\beta}{\bar z}$, in which $\beta \in \mathbb{C}$ (especially its dynamics along the real line). As we mentioned before, the complex families we discussed in this paper are the simplified cases of two more general families: $F_{\lambda,c,n,m}:\mathbb{C}\rightarrow\mathbb{C}$, where $F_{\lambda,c,n,m}(z) = z^{n} + c + \dfrac{\lambda}{z^{m}}$, and $G_{\beta,c,n,m}:\mathbb{R}^{2}\rightarrow\mathbb{R}^{2}$, where $G_{\beta,c,n,m}(z) = z^{n} + c + \dfrac{\beta}{\bar{z} ^{m}}$, in both of which $n,m \in \mathbb{N}$ and $\lambda, \beta, c \in \mathbb{C}$). However, there exists a much more general family: $Y_{\lambda, \beta, \alpha, c, n,  m_{1}, m_{2}, d}:\mathbb{R}^{2}\rightarrow\mathbb{R}^{2}$, where $Y_{\lambda, \beta, \alpha, c, n,  m_{1}, m_{2}, d}(z) = z^{n} + c + \dfrac{\lambda}{z^{m_{1}}} + \dfrac{\beta}{\bar z^{m_{2}}} + \alpha \bar{z}^{d} $, in which $n,m_{1},m_{2},d \in \mathbb{N}$ and $\lambda, \beta, \alpha, c \in \mathbb{C}$ \cite{Peckham II}, from which we took the families mentioned above. In this paper, readers probably have felt both the complication and elegancy of the dynamics of these simplified cases, and could imagine the difficulties we will probably encounter in the future research on these seemingly simple maps. More studies are definitely required for both better understanding the families we have referred in this paper and interpreting those more complicated and general families.

\clearpage

\end{document}